\newcommand{\dev}{\mathrm{div}}
\newcommand{\br}{[\;,\;]}
\newcommand{\too}{\longrightarrow}
\newcommand{\om}{\omega}
\newcommand{\esp}{\quad\mbox{and}\quad}
\newcommand{\G}{{\mathfrak{g}}}
\newcommand{\Li}{{\cal L}}
\newcommand{\B}{{\cal B}}
\newcommand{\D}{{\cal D}}
\newcommand{\di}{\displaystyle}
\newcommand{\Om}{\Omega}
\newcommand{\na}{\nabla}
\newcommand{\wi}{\widetilde}
\newcommand{\al}{\alpha}
\newcommand{\be}{\beta}
\newcommand{\ga}{\gamma}
\newcommand{\Ga}{\Gamma}
\newcommand{\e}{\epsilon}
\font\bb=msbm10
\def\B{\hbox{\bb B}}
\def\R{\hbox{\bb R}}
\def\C{\hbox{\bb C}}
\newtheorem{theorem}{Theorem}[section]
\newtheorem{lemma}[theorem]{Lemma}
\newtheorem{definition}[theorem]{Definition}
\newtheorem{example}[theorem]{Example}
\newtheorem{proposition}[theorem]{Proposition}
\newtheorem{remark}[theorem]{Remark}
\newtheorem{corollary}[theorem]{Corollary}
\numberwithin{equation}{section}
\newcommand{\dreqno}{\let\veqno\eqno}
\begin{document}
	
	\begin{frontmatter}
		
		
		
		
		\title{  Contravariant Pseudo-Hessian manifolds and their associated Poisson structures}
		
		\author[label1]{Abdelhak Abouqateb}
		\address[label1]{Universit\'e Cadi-Ayyad\\
			Facult\'e des sciences et techniques\\
			BP 549 Marrakech Maroc\\e-mail: a.abouqateb@uca.ac.ma
		}
		\author[label2]{Mohamed Boucetta}
		\address[label2]{Universit\'e Cadi-Ayyad\\
			Facult\'e des sciences et techniques\\
			BP 549 Marrakech Maroc\\e-mail: m.boucetta@uca.ac.ma
		}
		
		\author[label3]{Charif Bourzik}
		\address[label3]{Universit\'e Cadi-Ayyad\\
			Facult\'e des sciences et techniques\\
			BP 549 Marrakech Maroc\\e-mail: bourzikcharif@gmail.com
		}
		
		
		
		\begin{abstract} A contravariant pseudo-Hessian manifold  is a manifold $M$ endowed with a pair $(\nabla,h)$ where $\nabla$ is a flat connection and $h$ is a symmetric bivector field satisfying a contravariant Codazzi equation. When $h$ is invertible we recover the known notion of pseudo-Hessian manifold.
		Contravariant pseudo-Hessian manifolds have properties similar to Poisson manifolds and, in fact,  to any contravariant pseudo-Hessian manifold $(M,\na,h)$ we associate naturally a Poisson tensor on $TM$. We investigate these properties and we study in details many classes of such structures in order to highlight the richness of the geometry of these manifolds.

		\end{abstract}
		
		\begin{keyword}  Affine manifolds \sep Poisson manifolds \sep pseudo-Hessian manifolds \sep Associative commutative algebras
			\MSC 53A15 \sep \MSC 53D17 \sep \MSC 17D25
			
			
		\end{keyword}
		
	\end{frontmatter}
	
	

	
	
	
\section{Introduction}\label{section1}
A contravariant pseudo-Hessian manifold is an affine manifold $(M,\na)$ endowed with a symmetric bivector field $h$ such that, for any $\al,\be,\ga\in\Om^1(M)$,
\begin{equation}\label{gg}
(\nabla_{h_{\sharp}(\alpha)}h)(\beta,\gamma)=(\nabla_{h_{\sharp}(\beta)}h)(\alpha,\gamma),
\end{equation}where $h_\#:T^*M\too TM$ is the contraction. We will refer to \eqref{gg} as contravariant Godazzi equation. These manifolds where introduced in \cite{Be} as a generalization of pseudo-Hessian manifolds. Recall that a pseudo-Hessian manifold is an affine manifold $(M,\na)$ with a pseudo-Riemannian metric $g$ satisfying the Godazzi equation 
\begin{equation}\label{codazzi}
 \na_{X}g(Y,Z)=\na_{Y}g(X,Z),\quad  \end{equation}for any $X,Y,Z\in\Ga(TM)$.
The book \cite{Shi} is devoted to the study of Hessian manifolds which are pseudo-Hessian manifolds with a Riemannian metric. 

In this paper, we study contravariant pseudo-Hessian manifolds. The passage from pseudo-Hessian manifolds to contravariant pseudo-Hessian manifolds is similar to the passage from symplectic manifolds to Poisson manifolds and this similarity will guide our study. Let $(M,\na,h)$ be a contravariant pseudo-Hessian manifold. We will show that $T^*M$ has a Lie algebroid structure, $M$ has a singular  foliation whose leaves are pseudo-Hessian manifolds and $TM$ has a Poisson tensor whose symplectic leaves are pseudo-K\"ahlerian manifolds. We investigate an analog of Darboux-Weinstein's theorem and we show that it is not true in general but holds in some cases. We will study in details the correspondence which maps a contravariant pseudo-Hessian bivector field on $(M,\na)$ to a Poisson bivector field on $TM$.  We study affine, linear and quadratic contravariant pseudo-Hessian structures on  vector spaces and we show that an affine contravariant pseudo-Hessian structure on a vector space $V$ is equivalent to an associative commutative algebra product and a  2-cocycle on $V^*$. We study right invariant contravariant pseudo-Hessian structures on a Lie group $G$ and we show that $TG$ has a structure of Lie group (different from the one associated to  the adjoint action) for which the associated Poisson tensor is right invariant. We show that a right invariant contravariant pseudo-Hessian structure on a Lie group is equivalent to a $S$-matrix on the associated left symmetric algebra (see \cite{bai, Be2}) and we associate to any $S$-matrix on a left symmetric algebra $\G$ a solution of the classical Yang-Baxter equation on $\G\times\G$. Finally, we show that an action of a left symmetric algebra $\G$ on an affine manifold $(M,\na)$ transforms a $S$-matrix on $\G$ to a contravariant pseudo-Hessian bivector field on $(M,\na)$. Since the Lie algebra of affine vector fields of $(M,\na)$ has a natural structure of finite dimensional associative algebra, we have a mean to define contravariant pseudo-Hessian structures on any affine manifold. The paper contains many examples of contravariant pseudo-Hessian structures.

The paper is organized as follows. In Section \ref{section2}, we give the definition of a contravariant pseudo-Hessian manifold and we investigate its properties. In Section \ref{section3}, we study in details the Poisson structure of the tangent bundle of  a contravariant pseudo-Hessian manifold. Section \ref{section4} is devoted to the study of linear and affine contravariant pseudo-Hessian structures. Quadratic contravariant pseudo-Hessian structures will be studied in Section  \ref{section5}. In Section \ref{section6}, we study right invariant pseudo-Hessian structures on Lie groups.

\section{Contravariant pseudo-Hessian manifolds: definition and principal properties}
\label{section2}

\subsection{Definition of a contravariant pseudo-Hessian manifold}
 Recall that an affine manifold is a $n$-manifold   $M$ endowed with   a maximal atlas such that all transition functions are restrictions of elements of the affine group $\mathrm{Aff}(\R^n)$. This is equivalent to the existence on $M$ of a flat connection $\nabla$, i.e., torsionless and with vanishing curvature (see \cite{Shi} for more details). An affine coordinates system on an affine manifold $(M,\na)$ is a coordinates system $(x_{1},\ldots,x_{n})$ satisfying $\nabla\partial_{x_{i}}=0$ for any $i=1,\ldots,n$.

 Let $g$ be a pseudo-Riemannian metric on an affine manifold $(M,\nabla)$. The triple  $(M,\nabla,g)$ is called a {\it pseudo-Hessian manifold} if $g$ can be locally expressed in any affine coordinates system $(x_1,\ldots,x_n)$ as
 \[ g_{ij}=\frac{\partial^2\phi}{\partial x_i\partial x_j}. \]
 That  is equivalent to $g$ satisfying the {\it Codazzi equation} \eqref{codazzi}.
  When $g$ is Riemannian, we call $(M,\nabla,g)$ a Hessian manifold. The geometry of Hessian manifolds was studied intensively in \cite{Shi}.
 
 We consider now a more general situation.
 
 \begin{definition}[\cite{Be}] Let $h$ be a symmetric bivector  field on an affine manifold $(M,\na)$ and $h_{\sharp}:T^{*}M\rightarrow TM$ the associated contraction  given by $\beta(h_{\sharp}(\alpha))=h(\alpha,\beta)$. The triple $(M,\nabla,h)$ is called
 a {\it contravariant pseudo-Hessian manifold} if $h$ satisfies the \textit{contravariant Codazzi equation}
 \begin{equation}\label{codazzib}
 (\nabla_{h_{\sharp}(\alpha)}h)(\beta,\gamma)=(\nabla_{h_{\sharp}(\beta)}h)(\alpha,\gamma),\quad
 \end{equation}for any $\al,\be,\ga\in\Om^1(M)$. We call such $h$ a  pseudo-Hessian bivector field.
  
\end{definition}

One can see easily that if $(M,\na,g)$ is a pseudo-Hessian manifold then $(M,\na,g^{-1})$ is a contravariant pseudo-Hessian manifold.

The following proposition is obvious and gives the local expression of the equation \eqref{codazzib} in affine charts.
\begin{proposition}\label{loca} Let $(M,\na,h)$ be an affine manifold endowed with a symmetric bivector field. Then $h$ satisfies \eqref{codazzib} if and only if, for any $m\in M$, there exists an affine coordinates system $(x_1,\ldots,x_n)$ around $m$ such that for any $1\leq i<j\leq n$ and any $k=1,\ldots,n$
	\begin{equation}\label{eq21}
	\sum_{l=1}^{n}\left[h_{il}\partial_{x_{l}}(h_{jk})-h_{jl}\partial_{x_{l}}(h_{ik})\right]=0,
	\end{equation}where $h_{ij}=h(dx_i,dx_j)$.

\end{proposition}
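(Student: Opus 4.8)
The plan is to evaluate the contravariant Codazzi equation \eqref{codazzib} in an affine chart and to observe that both of its sides are tensorial in their three covector arguments, so that requiring the equation for arbitrary $1$-forms is equivalent to requiring it only for the coordinate $1$-forms $dx_1,\ldots,dx_n$.

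First I would record two elementary facts in an affine coordinates system $(x_1,\ldots,x_n)$ around $m$. Since $\beta(h_{\sharp}(\alpha))=h(\alpha,\beta)$, taking $\alpha=dx_i$ and $\beta=dx_j$ gives $h_{\sharp}(dx_i)=\sum_{l}h_{il}\partial_{x_l}$. Moreover, because the chart is affine we have $\na\partial_{x_l}=0$, so the components of $\na h$ are the ordinary partial derivatives of the components of $h$:
\[
(\na_{\partial_{x_l}}h)(dx_j,dx_k)=\partial_{x_l}(h_{jk}).
\]
Combining these, and using the $C^\infty(M)$-linearity of $h_{\sharp}$ together with the tensoriality of $X\mapsto \na_X h$, one obtains
\[
(\na_{h_{\sharp}(dx_i)}h)(dx_j,dx_k)=\sum_{l}h_{il}\partial_{x_l}(h_{jk}),
\]
so that \eqref{codazzib} evaluated at $(\alpha,\beta,\gamma)=(dx_i,dx_j,dx_k)$ becomes precisely the identity \eqref{eq21}. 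This expression vanishes identically when $i=j$ and is antisymmetric under the exchange of $i$ and $j$, which is exactly why it suffices to impose it for $1\leq i<j\leq n$ and all $k$.

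Next I would justify passing between ``all $1$-forms'' and ``the coordinate basis''. The map $\alpha\mapsto h_{\sharp}(\alpha)$ is $C^\infty(M)$-linear, $X\mapsto \na_X h$ is tensorial, and $\na_X h$ is itself a tensor in its two covector slots; hence both sides of \eqref{codazzib}, and therefore their difference, are $C^\infty(M)$-trilinear in $(\alpha,\beta,\gamma)$. Consequently, on a neighbourhood of $m$ equation \eqref{codazzib} holds for all $\alpha,\beta,\gamma\in\Om^1(M)$ if and only if it holds when $\alpha,\beta,\gamma$ run over $dx_1,\ldots,dx_n$, i.e. if and only if \eqref{eq21} holds. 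Since an affine manifold carries an affine chart around every point, this yields both implications at once: if \eqref{codazzib} holds then \eqref{eq21} holds in any affine chart about $m$ (in particular in some chart), and conversely if \eqref{eq21} holds in some affine chart about each $m$ then, by the tensorial reduction, \eqref{codazzib} holds globally.

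The computation itself is entirely routine, which is why the statement is flagged as obvious; the only step that is more than bookkeeping is the tensoriality argument, which is what legitimises testing the contravariant Codazzi equation on the finite family $\{dx_i\}$ rather than on all $1$-forms.
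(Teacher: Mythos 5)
Your proposal is correct and is exactly the routine verification the paper has in mind: the paper states this proposition without proof (calling it obvious), and your computation --- evaluating \eqref{codazzib} on the coordinate coframe in an affine chart, where $h_{\sharp}(dx_i)=\sum_l h_{il}\partial_{x_l}$ and $(\na_{\partial_{x_l}}h)(dx_j,dx_k)=\partial_{x_l}(h_{jk})$, combined with the $C^\infty(M)$-trilinearity of both sides to pass between arbitrary $1$-forms and the finite family $\{dx_i\}$ --- is the intended argument, including the observation that antisymmetry in $(i,j)$ justifies restricting to $i<j$.
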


\begin{example}\begin{enumerate}
		\item Take $M=\R^n$ endowed with its canonical affine structure and consider
		\[ h=\sum_{i=1}^n f_i(x_i)\partial_{x_i}\otimes \partial_{x_i}, \]where $f_i:\R\too\R$ for $i=1,\ldots,n$. Then one can see easily that $h$ satisfies \eqref{eq21} and hence defines a contravariant pseudo-Hessian structure on $\R^n$.
		\item Take $M=\R^n$ endowed with its canonical affine structure and consider
		\[ h=\sum_{i,j=1}^n x_i x_j\partial_{x_i}\otimes \partial_{x_j}. \]Then one can see easily that $h$ satisfies \eqref{eq21} and hence defines a contravariant pseudo-Hessian structure on $\R^n$.
		\item Let $(M,\na)$ be an affine manifold,  $(X_1,\ldots,X_r)$ a family of parallel vector fields and $(a_{i,j})_{1\leq i,j\leq n}$ a symmetric $n$-matrix. Then
		\[ h=\sum_{i,j}a_{i,j} X_i\otimes X_j \]defines a contravariant pseudo-Hessian structure on $M$.
		
	\end{enumerate} 
\end{example}

\subsection{The Lie algebroid of a contravariant pseudo-Hessian manifold}

We show that associated to any contravariant pseudo-Hessian manifold there is a Lie algebroid structure on its cotangent bundle and a Lie algebroid flat connection.
The reader can consult \cite{fer1, mac} for more details on Lie algebroids and their connections. 
 
	 Let $(M,\na,h)$ be an affine manifold endowed with a symmetric bivector field. We  associate to this triple 
	  a bracket on $\Om^1(M)$  by putting
\begin{equation} \label{bracket}
{[\alpha,\beta]_h}:=\nabla_{h_{\sharp}(\alpha)}\beta-\nabla_{h_{\sharp}(\beta)}\alpha,\quad
\end{equation} and a map $\D:\Om^1(M)\times\Om^1(M)\too\Om^1(M)$ given by
\begin{equation} \label{eqD}
\prec \mathcal{D}_{\alpha}\beta,X\succ:=(\nabla_{X}h)(\alpha,\beta)+\prec\nabla_{h_{\sharp}(\alpha)}\beta,X\succ,\quad
\end{equation} for any  $\al,\be\in\Om^1(M)$ and $X\in\Ga(TM)$.
This bracket is skew-symmetric and satisfies obviously 
\[[\al,\be]_h=\D_\al\be-\D_\be\al \esp  [\al,f\be]_h=f[\al,\be]_h+h_\#(\al)(f)\be, \]where $ f\in C^\infty(M),\al,\be\in\Om^1(M)$.

\begin{theorem}\label{theo1} With the hypothesis and notations above, the following assertions are equivalent:
	\begin{enumerate}
		\item[$(i)$] $h$ is a  pseudo-Hessian bivector field.
		\item[$(ii)$] $(T^*M,h_\#,[\;,\;]_h)$ is a Lie algebroid.
	\end{enumerate}
	In this case, $\D$ is a connection for the Lie algebroid structure $(T^*M,h_\#,[\;,\;]_h)$ satisfying
	\[h_\#(\D_\al\be)=\na_{h_\#(\al)}h_\#(\be)\esp R_{\D}(\al,\be):=\D_{[\al,\be]_h}-\D_\al\D_\be+\D_\be\D_\al=0, \]for any $\al,\be\in\Om^1(M)$.
	\end{theorem}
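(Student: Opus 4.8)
The whole argument turns on a single identity. Writing $A=h_{\sharp}(\al)$ and $B=h_{\sharp}(\be)$, the plan is to show first that
\[ h_{\sharp}(\D_\al\be)=\na_{A}h_{\sharp}(\be) \qquad (\star) \]
holds for all $\al,\be\in\Om^1(M)$ \emph{if and only if} $h$ satisfies \eqref{codazzib}. To see this I would pair each side of $(\star)$ with an arbitrary $\ga\in\Om^1(M)$: on the left I use the defining formula \eqref{eqD} with $X=h_{\sharp}(\ga)$, and on the right I expand $\ga(\na_{A}h_{\sharp}(\be))$ by the Leibniz rule together with $A(h(\be,\ga))=(\na_{A}h)(\be,\ga)+h(\na_{A}\be,\ga)+h(\be,\na_{A}\ga)$. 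The common term $\langle\na_{A}\be,h_{\sharp}(\ga)\rangle$ drops out, and the two pairings are seen to differ by exactly $(\na_{h_{\sharp}(\ga)}h)(\al,\be)-(\na_{A}h)(\be,\ga)$. By the symmetry of $h$, hence of $\na_{X}h$, in its two arguments, this defect vanishes for every $\ga$ precisely when \eqref{codazzib} holds. Thus $(\star)\Leftrightarrow(i)$, which in particular already establishes the displayed identity $h_{\sharp}(\D_\al\be)=\na_{h_{\sharp}(\al)}h_{\sharp}(\be)$ of the ``in this case'' part.

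Granting $(\star)$, the equivalence $(i)\Leftrightarrow(ii)$ is short. Since $\na$ is torsion free, $[A,B]=\na_{A}B-\na_{B}A$; feeding $(\star)$ into $h_{\sharp}([\al,\be]_h)=h_{\sharp}(\na_{A}\be)-h_{\sharp}(\na_{B}\al)$ and comparing with $[A,B]$ shows that the anchor condition $h_{\sharp}([\al,\be]_h)=[h_{\sharp}(\al),h_{\sharp}(\be)]$ is itself equivalent to \eqref{codazzib}. This yields $(ii)\Rightarrow(i)$ immediately, since in any Lie algebroid the anchor is a bracket morphism. For $(i)\Rightarrow(ii)$ the skew-symmetry and the Leibniz rule are already recorded before the statement, so only the Jacobi identity remains. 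Here I would expand the Jacobiator $[[\al,\be]_h,\ga]_h+[[\be,\ga]_h,\al]_h+[[\ga,\al]_h,\be]_h$ using \eqref{bracket}, replace each $h_{\sharp}([\cdot,\cdot]_h)$ by the corresponding $[\,\cdot\,,\cdot\,]$ via the anchor morphism property, and then collect the nine resulting terms according to which of $\al,\be,\ga$ is being differentiated. Each of the three groups is precisely $\pm R_{\na}(\cdot,\cdot)(\cdot)$, so the entire sum vanishes because $\na$ is flat. This is the cleanest step and needs nothing beyond flatness once the anchor morphism is in hand.

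It remains to verify that $\D$ is a connection for $(T^{*}M,h_{\sharp},[\;,\;]_h)$ and that $R_{\D}=0$. The two properties $\D_{f\al}\be=f\D_\al\be$ and $\D_\al(f\be)=f\D_\al\be+h_{\sharp}(\al)(f)\be$ follow directly from \eqref{eqD} and the tensoriality of $\na_{X}h$ in the arguments of $h$, with no appeal to \eqref{codazzib}. Once $(ii)$ is known the curvature $R_{\D}$ is tensorial, and its vanishing is the real computational core. I would start from $R_{\D}(\al,\be)\ga=\D_{[\al,\be]_h}\ga-\D_\al\D_\be\ga+\D_\be\D_\al\ga$, substitute the anchor morphism property, and use flatness to cancel the second-order block $\na_{[A,B]}\ga-\na_{A}\na_{B}\ga+\na_{B}\na_{A}\ga=-R_{\na}(A,B)\ga=0$.

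The remaining terms, all built from the tensor $\Phi(\al,\be,\ga):=(\na_{h_{\sharp}(\al)}h)(\be,\ga)$, must then be shown to cancel. Using that $\na^2 h$ is symmetric in its two derivative slots (again by flatness and torsion-freeness), I expect the surviving expression, paired with a vector field $X$, to reorganize exactly into $-(\na_{X}\Phi)(\al,\be,\ga)+(\na_{X}\Phi)(\be,\al,\ga)$, where in $\na_{X}\Phi$ one also differentiates the raising operator $h_{\sharp}$. By \eqref{codazzib} the tensor $\Phi$ is symmetric in its first two arguments (in fact totally symmetric, combining \eqref{codazzib} with the symmetry of $h$), hence so is $\na_{X}\Phi$, and the difference is zero. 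The delicate part of the whole proof is precisely this bookkeeping of the second covariant derivatives of $h$ and the repeated use of flatness to commute them and to match them against $\na_{X}\Phi$; by contrast, the Jacobi identity and the equivalence $(i)\Leftrightarrow(ii)$ are essentially formal consequences of the single identity $(\star)$.
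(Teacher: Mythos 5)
Your proposal is correct, and it takes a genuinely different route from the paper's proof. The paper exploits the affine charts: since the coordinate differentials are parallel, $[dx_i,dx_j]_h=0$, so by the criterion it cites from \cite{Be} the Lie algebroid conditions collapse to $[h_\#(dx_i),h_\#(dx_j)]=0$, which is exactly the local form \eqref{eq21} of the contravariant Codazzi equation; likewise it gets $R_\D=0$ by the explicit computation $\D_{dx_j}\D_{dx_i}dx_k=d\bigl(\sum_r h_{jr}\partial_{x_r}h_{ik}\bigr)$, so that the curvature difference is $d$ applied to the expression \eqref{eq21}. You instead argue intrinsically and self-containedly: you isolate the identity $(\star)$, $h_\#(\D_\al\be)=\na_{h_\#(\al)}h_\#(\be)$, and prove it is \emph{equivalent} to \eqref{codazzib} (the paper only derives it assuming Codazzi); you observe that the anchor-morphism condition $h_\#([\al,\be]_h)=[h_\#(\al),h_\#(\be)]$ is itself equivalent to \eqref{codazzib} --- indeed $\prec\ga,[h_\#(\al),h_\#(\be)]-h_\#([\al,\be]_h)\succ=(\na_{h_\#(\al)}h)(\be,\ga)-(\na_{h_\#(\be)}h)(\al,\ga)$ --- which yields $(ii)\Rightarrow(i)$ from the standard fact that the anchor of a Lie algebroid is a bracket morphism, a fact the paper's route does not need to invoke; and your Jacobiator computation is sound: after substituting the anchor property, the nine terms group into $-R_\na(A,B)\ga-R_\na(B,C)\al-R_\na(C,A)\be$ acting via the dual connection, all zero by flatness. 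What each approach buys: the paper's is computationally lighter and matches its affine-chart style, but leans on an external criterion; yours is coordinate-free, makes visible that the single identity $(\star)$ carries the whole theorem, and gives a conceptual reason for $R_\D=0$, namely that covariant differentiation preserves the symmetry of $\Phi(\al,\be,\ga)=(\na_{h_\#(\al)}h)(\be,\ga)$, which is totally symmetric under \eqref{codazzib}.

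One soft spot: your last step is phrased as an expectation (``I expect the surviving expression \dots to reorganize''). The claim is in fact correct, and the quickest way to close it is the remark you yourself make, that $R_\D$ is tensorial once $(ii)$ holds: it then suffices to evaluate on a parallel coframe, where $\D_{dx_i}dx_k=dh_{ik}$ and $\prec R_\D(dx_i,dx_j)dx_k,\partial_{x_l}\succ=\partial_{x_l}\bigl(\Phi(dx_j,dx_i,dx_k)-\Phi(dx_i,dx_j,dx_k)\bigr)$, which is precisely your antisymmetrized $\na_X\Phi$ on these arguments --- at which point your coordinate-free reorganization coincides with the paper's affine-coordinate computation, and no separate bookkeeping of $\na^2h$ is needed.
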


\begin{proof} According to \cite[Proposition 2.1]{Be}, $(T^*M,h_\#,[\;,\;]_h)$ is a Lie algebroid if and only if, for any affine coordinates system $(x_1,\ldots,x_n)$,
	\[ h_\#([dx_i,dx_j]_h)=[h_\#(dx_i),h_\#(dx_j)]\esp \oint_{i,j,k}[dx_i,[dx_j,dx_k]_h]_h=0, \]for $1\leq i<j<k\leq n$. Since $[dx_i,dx_j]_h=0$ this is equivalent to $[h_\#(dx_i),h_\#(dx_j)]=0$ for any $1\leq i<j\leq n$ which is equivalent to \eqref{eq21}. 
	
	Suppose now that $(i)$ or $(ii)$ holds. For any, $\al,\be,\ga\in\Om^1(M)$,
	\begin{eqnarray*}
		\prec\D_\al\be, h_\#(\ga)\succ&=&\na_{h_\#(\ga)}h(\al,\be)+h(\na_{h_\#(\al)}^*\be,\ga)\\
		&=&\na_{h_\#(\al)}h(\ga,\be)+h(\na_{h_\#(\al)}^*\be,\ga)\\
		&=&h_\#(\al).h(\be,\ga)-h(\na_{h_\#(\al)}^*\ga,\be)\\
		&=&\prec\ga,\na_{h_\#(\al)}h_\#(\be)\succ.
	\end{eqnarray*}This shows that $h_\#(\D_\al\be)=\na_{h_\#(\al)}h_\#(\be)$. 
	
	Let us show now that the curvature of $\D$ vanishes. Since
	$[dx_i,dx_j]_h=0$,  it suffices to show that, for any $i,j,k\in\{1,\ldots,n\}$ with $i<j$, 
	$ \mathcal D_{dx_i}\mathcal D_{dx_j}dx_k=\mathcal D_{dx_j}\mathcal D_{dx_i}dx_k.
	$
	We have
	$$ \prec \mathcal D_{dx_i}dx_k, \frac{\partial}{\partial x_l }\succ =\frac{\partial h_{ik}}{\partial x_l}
	$$
	and hence
	$$\mathcal D_{dx_i}dx_k=\sum_{l=1}^{n} \frac{\partial h_{ik}}{\partial x_l}dx_l
	$$
	and then
	\begin{align*}
	\mathcal D_{dx_j}\mathcal D_{dx_i}dx_k &= \sum_{l=1}^{n}\mathcal D_{dx_j}\left( \frac{\partial h_{ik}}{\partial x_l}dx_l\right)\\
	&= \sum_{l=1}^{n}\left(h_\sharp(dx_j)\left(\frac{\partial h_{ik}}{\partial x_l}\right)dx_l+\frac{\partial h_{ik}}{\partial x_l}\left(\sum_{s=1}^{n}\frac{\partial h_{jl}}{\partial x_s}dx_s \right)\right)\\
	&=  \sum_{l,r}^{}h_{jr}\left(\frac{\partial^2 h_{ik}}{\partial x_r\partial x_l}\right)dx_l+\sum_{s,l}^{}\frac{\partial h_{ik}}{\partial x_l}\frac{\partial h_{jl}}{\partial x_s}dx_s\\
	&= \sum_{l,r}^{}h_{jr}\left(\frac{\partial^2 h_{ik}}{\partial x_r\partial x_l}\right)dx_l+\sum_{l,r}^{}\frac{\partial h_{ik}}{\partial x_r}\frac{\partial h_{jr}}{\partial x_l}dx_l \\
	&=\sum_{l,r}\left(h_{jr}\left(\frac{\partial^2 h_{ik}}{\partial x_r\partial x_l}\right)+\frac{\partial h_{ik}}{\partial x_r}\frac{\partial h_{jr}}{\partial x_l}\right)dx_l \\
	&=\sum_{l}\frac{\partial }{\partial x_l}\left(\sum_{r}h_{jr}\frac{\partial h_{ik}}{\partial x_r}\right)dx_l.
	\end{align*}
	So
	\begin{align*}
	\mathcal D_{dx_i}\mathcal D_{dx_j}dx_k-\mathcal D_{dx_j}\mathcal D_{dx_i}dx_k&=
	d\left(\sum_r \left(h_{jr}\frac{\partial h_{ik}}{\partial x_r} - h_{ir}\frac{\partial h_{jk}}{\partial x_r}\right)\right)
	\stackrel{\eqref{eq21}}=d(0)=0.
	\end{align*}
	\end{proof}

 The following  result is an important consequence of Theorem \ref{theo1}.
  \begin{proposition}\label{orbit} (\cite[Theorem 6.7]{Be2}) Let $(M,\na,h)$ be a contravariant pseudo-Hessian manifold. Then:
  	\begin{enumerate}
  	
  	\item The distribution $\mathrm{Im}h_\#$ is integrable and defines a singular foliation $\mathcal{L}$ on $M$. \item For any leaf $L$ of $\mathcal{L}$, $(L,\na_{|L},g_L)$ is a pseudo-Hessian manifold where $g_L$ is given by $g_L(h_\#(\al),h_\#(\be))=h(\al,\be)$. \end{enumerate}
  We will call the foliation defined by $\mathrm{Im}h_\#$ the affine foliation associated to $(M,\na,h)$.
  	
  \end{proposition}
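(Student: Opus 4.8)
The plan is to run everything through the Lie algebroid $(T^{*}M,h_\#,[\;,\;]_h)$ produced by Theorem \ref{theo1}. For the first assertion I would recall the general fact that the orbits of any Lie algebroid form a singular foliation: the anchor is a morphism of brackets, so the family of vector fields $h_\#(\Omega^1(M))$ is a $C^\infty(M)$-submodule of $\Gamma(TM)$ which is involutive, since $[h_\#(\alpha),h_\#(\beta)]=h_\#([\alpha,\beta]_h)$, and locally finitely generated, since $\Omega^1(M)$ is locally generated by $dx_1,\dots,dx_n$ and hence $h_\#(\Omega^1(M))$ by $h_\#(dx_1),\dots,h_\#(dx_n)$. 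By the Stefan--Sussmann theorem (see \cite{fer1,mac}) such a module integrates to a generalized foliation whose tangent distribution is exactly $\mathrm{Im}\,h_\#$; this is the foliation $\mathcal L$.

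For the second assertion I first check that $g_L$ is a well-defined pseudo-Riemannian metric on each leaf. Independence of the chosen primitives follows from the symmetry of $h$: if $h_\#(\beta)=h_\#(\beta')$ then $h(\alpha,\beta)-h(\alpha,\beta')=h(\beta-\beta',\alpha)=\alpha(h_\#(\beta-\beta'))=0$, and likewise in the first slot. Nondegeneracy is immediate, for $g_L(h_\#(\alpha),\cdot)\equiv 0$ forces $h(\alpha,\cdot)\equiv 0$, i.e. $h_\#(\alpha)=0$, so $g_L$ is nondegenerate on $T_mL=\mathrm{Im}\,h_{\#,m}$. Next I show each leaf is auto-parallel for $\nabla$: the identity $\nabla_{h_\#(\alpha)}h_\#(\beta)=h_\#(\mathcal D_\alpha\beta)$ of Theorem \ref{theo1} shows $\nabla_XY\in\mathrm{Im}\,h_\#$ whenever $X,Y$ are tangent to $L$ (extending $C^\infty$-linearly in $X$ and by Leibniz in $Y$, since sections of $\mathrm{Im}\,h_\#$ are locally spanned by fields $h_\#(\alpha)$). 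Hence $\nabla$ restricts to a connection $\nabla_{|L}$, whose torsion and curvature are restrictions of those of $\nabla$ and therefore vanish, so $(L,\nabla_{|L})$ is affine.

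It remains to verify the Codazzi equation for $g_L$. Writing $X=h_\#(\alpha)$, $Y=h_\#(\beta)$, $Z=h_\#(\gamma)$ and using $\nabla_XY=h_\#(\mathcal D_\alpha\beta)$ together with $g_L(h_\#(\mu),h_\#(\nu))=h(\mu,\nu)$, I would expand $(\nabla_Xg_L)(Y,Z)=X(g_L(Y,Z))-g_L(\nabla_XY,Z)-g_L(Y,\nabla_XZ)$ and substitute the defining relation \eqref{eqD} of $\mathcal D$. The terms carrying the dual connection cancel against those obtained by expanding $h_\#(\alpha)(h(\beta,\gamma))$ via $(\nabla_{h_\#(\alpha)}h)(\beta,\gamma)$, leaving $(\nabla_Xg_L)(Y,Z)=(\nabla_{h_\#(\alpha)}h)(\beta,\gamma)-(\nabla_{h_\#(\gamma)}h)(\alpha,\beta)-(\nabla_{h_\#(\beta)}h)(\alpha,\gamma)$. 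Antisymmetrizing in $(\alpha,\beta)$ and using that $\nabla h$ is symmetric in its two covariant slots, the $\gamma$-anchored term drops out and one finds $(\nabla_Xg_L)(Y,Z)-(\nabla_Yg_L)(X,Z)=2[(\nabla_{h_\#(\alpha)}h)(\beta,\gamma)-(\nabla_{h_\#(\beta)}h)(\alpha,\gamma)]$, which vanishes by the contravariant Codazzi equation \eqref{codazzib}. As the fields $h_\#(\alpha)$ span $TL$ pointwise and the Codazzi equation is tensorial, this proves $(L,\nabla_{|L},g_L)$ is pseudo-Hessian.

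I expect the genuine obstacle to be the first assertion rather than the second: the distribution $\mathrm{Im}\,h_\#$ has non-constant rank, so its integrability is not classical Frobenius but the singular Stefan--Sussmann statement, and the point that must be handled with care is the local finite generation of the module of vector fields. Once that is granted, and once auto-parallelism and the well-definedness of $g_L$ are established, the pseudo-Hessian property of the leaves is a bookkeeping computation whose only essential inputs are the symmetry of $\nabla h$ and a single use of \eqref{codazzib}.
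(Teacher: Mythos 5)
Your proposal is correct and follows exactly the route the paper intends: the paper states this proposition without an inline proof, citing \cite[Theorem 6.7]{Be2} and presenting it as a consequence of Theorem \ref{theo1}, and the mechanism you spell out --- integrability of $\mathrm{Im}\,h_\#$ from the Lie algebroid structure, auto-parallelism of the leaves via $h_\#(\mathcal D_\al\be)=\na_{h_\#(\al)}h_\#(\be)$, well-definedness and nondegeneracy of $g_L$ from the symmetry of $h$, and the Codazzi equation for $g_L$ from a single use of \eqref{codazzib} after the $\mathcal D$-terms cancel --- is precisely the standard argument behind that citation. The only cosmetic remark is that your integrability step is really Hermann's theorem (involutive plus locally finitely generated implies integrable), of which the general Stefan--Sussmann statement is a strengthening; as phrased, your appeal to it is the usual and correct way to get the singular foliation of a Lie algebroid, so there is no gap.
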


  \begin{remark} This proposition shows that contravariant pseudo-Hessian bivector fields can be used either to build examples of affine foliations on affine manifolds or to build examples of pseudo-Hessian manifolds.
  	
  \end{remark}

 For the reader familiar with  Poisson manifolds what we have established so far shows the similarities between Poisson manifolds and contravariant pseudo-Hessian manifolds. One can consult \cite{Dufour} for more details on Poisson geometry. Poisson manifolds have many relations with Lie algebras and we will see now and in Section \ref{section4} that contravariant pseudo-Hessian manifolds are related to commutative associative algebras.
 
  Let $(M,\na,h)$ be a contravariant pseudo-Hessian manifold and $\D$  the connection given  in  \eqref{eqD}. Let $x\in M$ and  $\G_x=\ker h_{\#}(x)$.
    For any $\al,\be\in\Om^1(M)$, $h_\#(\D_\al\be)=\na_{h_\#(\al)}h_\#(\be)$. This shows that if $h_\#(\al)(x)=0$ then $h_\#(\D_\al\be)(x)=0$. Moreover, $\D_\al\be-\D_{\be}\al=\na_{h_\#(\al)}\be-\na_{h_\#(\be)}\al$. This implies that if $h_\#(\al)(x)=h_\#(\be)(x)=0$ then $\D_\al\be(x)=\D_\be\al(x)$.
  For any $a,b\in\G_x$ put
  \[ a\bullet b=(\D_\al\be)(x), \]where  $\al,\be$ are two differential 1-forms satisfying $\al(x)=a$ and $\be(x)=b$. This defines a commutative product on $\G_x$ and moreover, by using the vanishing of the curvature of $\D$, we get:
  \begin{proposition} $(\G_x,\bullet)$ is a commutative associative algebra.
  	\end{proposition}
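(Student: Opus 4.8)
The plan is to take commutativity and well-definedness of $\bullet$ as already secured and to concentrate on associativity, which I would derive from the flatness $R_\D=0$ of Theorem \ref{theo1} together with one elementary algebraic lemma. Fix $a,b,c\in\G_x$ and choose $1$-forms $\al,\be,\ga\in\Om^1(M)$ with $\al(x)=a$, $\be(x)=b$, $\ga(x)=c$. The starting observation is that $a\bullet b=(\D_\al\be)(x)$ lies again in $\G_x$, because $h_\#(\D_\al\be)=\na_{h_\#(\al)}h_\#(\be)$ vanishes at $x$ when $h_\#(\al)(x)=h_\#(a)=0$. Hence $\D_\al\be$ is itself an admissible extension of the element $a\bullet b$, and likewise $\D_\al\ga$ extends $a\bullet c$, so that
\[
(a\bullet b)\bullet c=(\D_{\D_\al\be}\ga)(x),\qquad a\bullet(b\bullet c)=(\D_\al\D_\be\ga)(x).
\]
Writing $[a,b,c]:=(a\bullet b)\bullet c-a\bullet(b\bullet c)$ for the associator, this exhibits it as $[a,b,c]=(\D_{\D_\al\be}\ga-\D_\al\D_\be\ga)(x)$.

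Next I would bring in flatness. Since $[\al,\be]_h=\D_\al\be-\D_\be\al$ and $\D$ is $C^\infty(M)$-linear in its first argument (immediate from \eqref{eqD}), the vanishing of $R_\D$ gives
\[
\D_\al\D_\be\ga-\D_\be\D_\al\ga=\D_{[\al,\be]_h}\ga=\D_{\D_\al\be}\ga-\D_{\D_\be\al}\ga,
\]
which I would rearrange into $\D_{\D_\al\be}\ga-\D_\al\D_\be\ga=\D_{\D_\be\al}\ga-\D_\be\D_\al\ga$. Evaluating at $x$, the left-hand side is $[a,b,c]$, while on the right $\D_\be\al$ extends $b\bullet a$ and $\D_\al\ga$ extends $a\bullet c$, so the right-hand side is exactly $(b\bullet a)\bullet c-b\bullet(a\bullet c)=[b,a,c]$. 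This yields the key symmetry $[a,b,c]=[b,a,c]$, i.e. the associator is symmetric in its first two arguments.

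Finally I would close with a purely algebraic fact: in a commutative algebra over $\R$, an associator symmetric in its first two arguments must vanish. Commutativity forces $[a,b,c]=-[c,b,a]$ (skew-symmetry under exchange of the outer arguments); combining this with $[a,b,c]=[b,a,c]$ shows $[a,b,c]=-[x,z,y]$, i.e. the associator is also skew in its last two arguments. A trilinear expression that is simultaneously symmetric in $(1,2)$ and skew in $(2,3)$ satisfies $[a,b,c]=-[a,b,c]$ after chasing a six-term cycle of these two symmetries, hence $2[a,b,c]=0$ and $[a,b,c]=0$. This gives associativity and completes the proof.

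I expect the real content to sit entirely in the passage from $R_\D=0$ to the $(1,2)$-symmetry of the associator; once that is in place the commutative-algebra lemma is routine and uses no further geometry. The points demanding care are bookkeeping: verifying that $a\bullet b\in\G_x$ so that $\D_\al\be$ is a legitimate extension through which the iterated product may be computed, and correctly using the $C^\infty$-linearity of $\D$ in the first slot when expanding $\D_{[\al,\be]_h}\ga$. These are precisely the places where the hypothesis $a,b,c\in\ker h_\#(x)$ and the structural identities of $\D$ established in Theorem \ref{theo1} are indispensable.
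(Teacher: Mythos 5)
Your proof is correct and fills in exactly the argument the paper leaves implicit (the paper offers no written proof beyond the remark that the claim follows ``by using the vanishing of the curvature of $\D$''): flatness of $\D$, evaluated at $x$ where the anchors vanish, yields the $(1,2)$-symmetry of the associator, and the routine algebraic fact that a commutative algebra whose associator is symmetric in its first two arguments is associative completes the proof. The only blemish is the typo ``$-[x,z,y]$'', which should read ``$-[a,c,b]$''.
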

  Near a point where $h$ vanishes, the algebra structure of $\G_x$ can be made explicit.
  \begin{proposition} \label{prass}We consider $\R^n$ endowed with its canonical affine connection,   $h$  a symmetric bivector field on $\R^n$ such that $h(0)=0$ and $(\R^n,\na,h)$ is a contravariant pseudo-Hessian manifold. Then the product on $(\R^n)^*$ given by
  	\[ e_i^*\bullet e_j^*=\sum_{k=1}^n\frac{\partial h_{ij}}{\partial x_k}(0)e_k^* \]is associative and commutative.
  	
  \end{proposition}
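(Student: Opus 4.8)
The plan is to realize the stated product as the specialization to the origin of the commutative associative product $\bullet$ on $\G_x=\ker h_\#(x)$ constructed just above. Since $h(0)=0$ we have $h_\#(0)=0$, so $\G_0=\ker h_\#(0)$ is the whole cotangent space $(\R^n)^*$, and $\bullet$ is defined on all of it. To read off its formula I would use the computation already made in the proof of Theorem \ref{theo1}, namely $\D_{dx_i}dx_j=\sum_{k}\frac{\partial h_{ij}}{\partial x_k}dx_k$. Taking $\al=dx_i$ and $\be=dx_j$ (parallel $1$-forms with $\al(0)=e_i^*$, $\be(0)=e_j^*$) and evaluating at the origin gives $e_i^*\bullet e_j^*=(\D_{dx_i}dx_j)(0)=\sum_k\frac{\partial h_{ij}}{\partial x_k}(0)e_k^*$, which is exactly the product in the statement. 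Commutativity and associativity are then immediate from the preceding proposition, so at this level the proof is a one-line specialization.

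If instead one wants a self-contained verification that exhibits the role of the hypothesis $h(0)=0$, I would write $c_{ij}^k:=\frac{\partial h_{ij}}{\partial x_k}(0)$, so that $e_i^*\bullet e_j^*=\sum_k c_{ij}^k e_k^*$. Commutativity is clear because $h$ is symmetric, i.e. $c_{ij}^k=c_{ji}^k$. Associativity amounts to the identity $\sum_l c_{ij}^l c_{lk}^m=\sum_l c_{jk}^l c_{il}^m$ for all indices, and this I would extract from the vanishing of the curvature of $\D$. Starting from
\[
\D_{dx_j}\D_{dx_i}dx_k=\sum_l\frac{\partial}{\partial x_l}\Big(\sum_r h_{jr}\frac{\partial h_{ik}}{\partial x_r}\Big)dx_l,
\]
I would differentiate and evaluate at the origin: every term still carrying an undifferentiated factor $h_{jr}$ dies because $h_{jr}(0)=0$, leaving only $\sum_r c_{jr}^l c_{ik}^r$ as the coefficient of $dx_l$. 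Equating the two mixed second derivatives (equal since $R_\D=0$) then yields the single quadratic relation $\sum_r c_{ir}^m c_{jk}^r=\sum_r c_{jr}^m c_{ik}^r$, and a short reindexing using $c_{ab}^c=c_{ba}^c$ turns this precisely into the associativity identity above. The very same relation also drops out by differentiating the contravariant Codazzi equation \eqref{eq21} once and using $h(0)=0$.

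I expect the only real obstacle to be the last step of bookkeeping: recognizing that the collapsed curvature (equivalently Codazzi) relation is the associativity relation after a permutation of the free indices and one application of the lower-index symmetry of the $c_{ij}^k$. Conceptually nothing beyond Theorem \ref{theo1} is required; the entire content is the observation that $h(0)=0$ annihilates all the second-order (Hessian-type) terms, so that the otherwise complicated flatness identity degenerates into a purely quadratic relation among the first derivatives $\frac{\partial h_{ij}}{\partial x_k}(0)$, which is exactly what associativity of the structure constants demands.
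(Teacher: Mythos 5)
Your proof is correct and follows essentially the same route as the paper: the paper's own proof is exactly your first paragraph, namely the observation that $h(0)=0$ makes $\G_0=(\R^n)^*$ and that $\mathcal{D}_{dx_i}dx_j=dh_{ij}$ (by \eqref{eqD}), so the stated product is the specialization at the origin of the commutative associative product on $\G_x$ from the preceding proposition. Your second paragraph merely unwinds the flatness of $\mathcal{D}$ (equivalently, a differentiated \eqref{eq21}) into the explicit structure-constant identity, which is a sound and correctly executed elaboration of the same argument rather than a different method.
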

  \begin{proof} It is a consequence of the relation $\D_{dx_i}dx_j=dh_{ij}$ true by virtue of \eqref{eqD}.
  	\end{proof}

  \subsection{The product of contravariant pseudo-Hessian manifolds and the splitting theorem}
  
 As  the product of two Poisson manifolds is a Poisson manifold \cite{wei2}, the product of two contravariant pseudo-Hessian manifolds is a contravariant pseudo-Hessian manifold. 
  
  Let $(M_{1},\nabla^{1},h^{1})$ and $(M_{2},\nabla^{2},h^{2})$ be two contravariant pseudo-Hessian manifolds. We denote by $p_{i}:M=M_{1}\times M_{2}\rightarrow M_{i}, i=1,2$ the canonical projections. For any $X\in\Ga(TM_1)$ and $Y\in\Ga(TM_2)$, we denote by $X+ Y$ the vector field on $M$ given by $(X+ Y)(m_1,m_2)=(X(m_1),Y(m_2))$. The product of the affine atlases on $M_1$ and $M_2$ is an affine atlas on $M$ and the corresponding affine connection is the unique flat connection $\na$ on $M$ satisfying $\na_{X_1+ Y_1}(X_2+ Y_2)=\na_{X_1}^1Y_1+\na_{X_2}^2Y_2$, for any $X_1,X_2\in\Ga(TM_1)$ and $Y_1,Y_2\in\Ga(TM_2)$. Moreover, the product of $h_1$ and $h_2$ is the unique symmetric bivector field $h$ satisfying
  $$h(p_1^*\al_1,p_1^*\al_2)=h^1(\al_1,\al_2)\circ p_1,\; h(p_2^*\be_1,p_2^*\be_2)=h^2(\be_1,\be_2)\circ p_2\esp h(p_1^*\al_1,p_2^*\be_1)=0,$$
  for any $\al_1,\be_1\in\Om^1(M_1)$,  
  $\al_2,\be_2\in\Om^1(M_2)$,
  \begin{proposition}
  	 $(M,\na,h)$ is a contravariant pseudo-Hessian manifold.

  \end{proposition}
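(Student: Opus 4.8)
The plan is to reduce the contravariant Codazzi equation \eqref{codazzib} for $(M,\na,h)$ to the local criterion of Proposition \ref{loca} and then to exploit the block structure that the product bivector field $h$ has in product affine coordinates. First I would build adapted affine coordinates on $M$: choose affine coordinates $(x_1,\ldots,x_{n_1})$ around $m_1\in M_1$ and $(y_1,\ldots,y_{n_2})$ around $m_2\in M_2$, where $n_i=\dim M_i$. Since each $\partial_{x_i}$ is parallel for $\na^1$ and each $\partial_{y_j}$ is parallel for $\na^2$, the defining relation $\na_{X_1+Y_1}(X_2+Y_2)=\na^1_{X_1}X_2+\na^2_{Y_1}Y_2$ of the product connection shows that every $\partial_{x_i}$ and every $\partial_{y_j}$ is parallel for $\na$; hence $(x_1,\ldots,x_{n_1},y_1,\ldots,y_{n_2})$ is an affine coordinate system on $M$ around $(m_1,m_2)$.

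Next I would record the block structure of the components of $h$ in these coordinates, which follows at once from the three defining relations for the product bivector field: a component with both indices in the $M_1$-block equals the corresponding $h^1_{ij}$ and depends only on the $x$-variables, a component with both indices in the $M_2$-block equals the corresponding $h^2_{ij}$ and depends only on the $y$-variables, and every mixed component vanishes identically. Substituting this into \eqref{eq21} — whose indices $i,j,k,l$ now range over all $n=n_1+n_2$ coordinates — and organizing the computation by the blocks to which the free indices $i,j,k$ belong, one meets three situations. When $i,j,k$ all lie in the $M_1$-block, the nonvanishing summands force $l$ into the $M_1$-block as well, and \eqref{eq21} reduces verbatim to the same identity for $h^1$, which holds because $(M_1,\na^1,h^1)$ is contravariant pseudo-Hessian; the all-$M_2$ case is symmetric. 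In every remaining (mixed) case the left-hand side of \eqref{eq21} vanishes term by term: a factor $h_{il}$ is nonzero only when $i$ and $l$ share a block, while the accompanying derivative $\partial_{x_l}(h_{jk})$ is nonzero only when $h_{jk}\neq0$ (forcing $j$ and $k$ into one block) and $l$ lies in that same block, so a nonzero product in the first group of summands requires $i,j,k,l$ all in one block; the second group $h_{jl}\partial_{x_l}(h_{ik})$ requires the same. Once $i,j,k$ do not all share a block these requirements are incompatible, every product is zero, and \eqref{eq21} reads $0=0$. Proposition \ref{loca} then yields the claim.

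The only genuine obstacle is the bookkeeping of these mixed terms: one must keep track both of the vanishing of mixed components of $h$ and of the fact that a diagonal-block component never depends on the other block's coordinates, so that all cross terms and cross derivatives drop out. A coordinate-free alternative is also available and confirms the result. Since the expression $(\na_{h_\sharp(\al)}h)(\be,\ga)$ is $C^\infty(M)$-linear in each of $\al,\be,\ga$, it suffices to verify \eqref{codazzib} when each of the three forms is a pullback $p_1^*\om$ or $p_2^*\eta$, because such forms span $T^*_{(m_1,m_2)}M$. One first checks, from the defining relations for $h$, that $h_\sharp(p_1^*\al)$ equals the vector field $h^1_\sharp(\al)+0$ tangent to the $M_1$-factor (and symmetrically for $p_2^*$); after this, each evaluation of $(\na_{h_\sharp(\cdot)}h)(\cdot,\cdot)$ either decouples into the Codazzi equation on a single factor or collapses to $0$, the cross terms vanishing for exactly the same reason as in the coordinate computation.
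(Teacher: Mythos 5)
Your proposal is correct and follows essentially the same route as the paper: the paper also chooses product affine coordinates near $(m_1,m_2)$, writes $h=\sum_{i,j}h_{ij}^1\circ p_1\,\partial_{x_i}\otimes\partial_{x_j}+\sum_{l,k}h_{lk}^2\circ p_2\,\partial_{y_l}\otimes\partial_{y_k}$, and verifies \eqref{eq21} directly, which is exactly the block bookkeeping you carry out in detail. Your case analysis simply makes explicit the check the paper leaves to the reader, and the coordinate-free variant you sketch is a valid alternative confirmation but not a different method.
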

  
  \begin{proof} Let $(m_1,m_2)\in M$. Choose an affine coordinates system $(x_1,\ldots,x_{n_1})$ near $m_1$ and an affine coordinates system $(y_1,\ldots,y_{n_2})$ near $m_2$. Then
  	\[ h=\sum_{i,j}h_{ij}^1\circ p_1\partial_{x_i}\otimes \partial_{x_j}+
  	\sum_{l,k}h_{lk}^2\circ p_2\partial_{y_l}\otimes \partial_{y_k} \]and one can check easily that $h$ satisfies \eqref{eq21}.
    \end{proof}
    If we pursue the exploration of the analogies between Poisson manifolds and contravariant pseudo-Hessian manifolds we can ask naturally if there is an analog of the Darboux-Weinstein's theorem (see\cite{wei2}) in the context of contravariant pseudo-Hessian manifolds. More precisely,  let $(M,\na,h)$ be a contravariant pseudo-Hessian manifold and $m\in M$ where $\mathrm{rank}h_\#(m)=r$.
    One can ask if
     there exits an affine coordinates system $(x_1,\ldots,x_r,y_1,\ldots,y_{n-r})$ such that
    	\[ h=\sum_{i,j=1}^r h_{ij}(x_1,\ldots,x_r)\partial_{x_i}\otimes \partial_{x_j}+\sum_{i,j=1}^{n-r} f_{ij}(y_1,\ldots,y_{n-r})\partial_{y_i}\otimes \partial_{y_j}, \]
    	where $(h_{ij})_{1\leq i,j\leq r}$ is invertible and its the inverse of $\left(\frac{\partial^2\phi}{\partial x_i\partial x_j}\right)_{1\leq i,j\leq r}$ and $f_{ij}(m)=0$ for any $i,j$. Moreover, if the rank of $h_\#$ is constant near $m$ then the functions $f_{ij}$ vanish.

    The answer is no in general  for a geometric reason. Suppose that $m$ is regular, i.e., the rank of $h$ is constant near $m$ and suppose that there exists  an affine coordinates system $(x_1,\ldots,x_r,y_1,\ldots,y_{n-r})$ such that
    \[ h=\sum_{i,j=1}^r h_{ij}(x_1,\ldots,x_r)\partial_{x_i}\otimes \partial_{x_j}. \]
    This will have a strong geometric consequence, namely that $\mathrm{Im}h_\#=\mathrm{span}(\partial_{x_1},\ldots,\partial_{x_r})$ and the associated affine foliation is parallel, i.e., if $X$ is a local vector field and $Y$ is tangent to the foliation then $\na_XY$ is tangent to the foliation. We give now an example of a regular contravariant pseudo-Hessian manifold whose associated affine foliation is not parallel which shows that the analog of Darboux-Weinstein is not true in general.
\begin{example}
		 We consider $M=\R^4$ endowed with its canonical affine connection $\na$, denote by $(x,y,z,t)$ its canonical coordinates and consider 
		\[ X=\cos(t)\partial_x+\sin(t)\partial_y+\partial_z,\;Y=-\sin(t)\partial_x+\cos(t)\partial_y\esp h=X\otimes Y+Y\otimes X. \]
		We have $\na_XX=\na_YX=\na_XY=\na_YY=0$ and hence $h$ is a  pseudo-Hessian bivector field,  $\mathrm{Im}h_\#=\mathrm{span}\{X,Y \}$ and the rank of $h$ is constant equal to 2. However, the foliation associated to $\mathrm{Im}h_\#$ is not parallel since $\nabla_{\partial_t}Y=-X+\partial_z\notin \mathrm{Im}h_\#$.  
		
\end{example}

However, when $h$ has constant rank equal to $\dim M-1$, we have the following result and its important corollary.

     \begin{theorem}\label{th2}
     	Let $(M,\nabla,h)$ be a contravariant pseudo-Hessian manifold and $m{\in}M$ such that $m$ is a regular point and the rank of $h_{\#}(m)$ is equal to $n-1$. Then there exists an affine coordinates system $(x_{1},\ldots,x_{n})$ around $m$ and a function $f(x_{1},\ldots,x_{n})$ such that
     	\begin{equation*}
     	h=\sum_{i,j=1}^{n-1}h_{ij}\partial_{x_{i}}\otimes\partial_{x_{j}},
     	\end{equation*} 
     	and the matrix $(h_{ij})_{1\leq i,j\leq n-1}$ is invertible and its inverse is the matrix $\big(\frac{\partial^{2}f}{\partial x_{i}\partial x_{j}}\big)_{1\leq i,j\leq n-1}$.
     \end{theorem}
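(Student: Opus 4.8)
The plan is to reduce the statement to the pseudo-Hessian structure carried by the leaves of the affine foliation (Proposition \ref{orbit}), after first showing that, in the corank-one situation, this foliation is parallel, i.e. its leaves are open pieces of parallel affine hyperplanes. Near the regular point $m$ we have $\mathrm{rank}\,h_\#\equiv n-1$, so $K:=\ker h_\#$ is a line subbundle of $T^*M$; fix a local $1$-form $\eta$ generating $K$, so that $h_\#(\eta)=0$, equivalently $h(\ga,\eta)=0$ for all $\ga\in\Om^1(M)$, and $\mathrm{Im}\,h_\#=\ker\eta$ is exactly the corank-one integrable distribution of Proposition \ref{orbit}.

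First I would encode the contravariant Codazzi equation \eqref{codazzib} as the total symmetry of the trilinear expression $C(\al,\be,\ga):=(\na_{h_\#(\al)}h)(\be,\ga)$: symmetry in $(\al,\be)$ is \eqref{codazzib} itself, while symmetry in $(\be,\ga)$ is the symmetry of $h$. Setting $\ga=\eta$ and using $h(\cdot,\eta)=0$ one gets $(\na_X h)(\be,\eta)=-h(\be,\na_X\eta)$, whereas total symmetry gives $C(\al,\be,\eta)=C(\eta,\be,\al)=(\na_{h_\#(\eta)}h)(\be,\al)=0$ because $h_\#(\eta)=0$. Hence $h(\be,\na_{h_\#(\al)}\eta)=0$ for all $\al,\be$, that is $\na_Z\eta\in K$ for every $Z\in\mathrm{Im}\,h_\#$; in particular each leaf is totally geodesic. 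The next, decisive task is to upgrade this to $\na_X\eta\in K$ for all $X$, i.e. to prove that $\eta$ is \emph{recurrent}, $\na\eta=\la\otimes\eta$. Granting recurrence, flatness of $\na$ forces $d\la=0$, so locally $\la=d\psi$ and $e^{-\psi}\eta$ is $\na$-parallel, hence closed with constant components in affine coordinates; thus $e^{-\psi}\eta=dx_n$ for an affine function $x_n$ whose level sets are the leaves.

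Next I would complete $x_n$ to an affine chart $(x_1,\dots,x_{n-1},x_n)$ in which $\mathrm{Im}\,h_\#=\mathrm{span}(\partial_{x_1},\dots,\partial_{x_{n-1}})$; then automatically $h=\sum_{i,j=1}^{n-1}h_{ij}\,\partial_{x_i}\otimes\partial_{x_j}$, and the block $(h_{ij})$ is invertible since $\mathrm{rank}\,h_\#=n-1$. On each leaf $\{x_n=c\}$ the functions $x_1,\dots,x_{n-1}$ are affine for $\na_{|L}$, and by Proposition \ref{orbit} the induced metric $g_L$, characterised by $g_L(h_\#(\al),h_\#(\be))=h(\al,\be)$ and hence having matrix $(h_{ij})^{-1}$, is pseudo-Hessian; by definition it is therefore a Hessian, $g_L=\big(\partial^2 f/\partial x_i\partial x_j\big)_{1\le i,j\le n-1}$. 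Choosing the leafwise potential to depend smoothly on the transverse parameter $x_n$ (integrate $g_L$ twice in $x_1,\dots,x_{n-1}$ with smooth-in-$x_n$ integration data) produces $f(x_1,\dots,x_n)$ with $(h_{ij})^{-1}=\big(\partial^2 f/\partial x_i\partial x_j\big)$, which is the required conclusion.

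The hard part will be the transverse recurrence of $\eta$ in the second paragraph: passing from $\na_Z\eta\in K$ for $Z\in\mathrm{Im}\,h_\#$ (totally geodesic leaves) to $\na_X\eta\in K$ for all $X$ (parallel foliation). This is exactly the property that must single out the corank-one hypothesis, because the $\R^4$ example given above shows that the affine foliation need not be parallel once the corank exceeds one, and the displayed form of $h$ forces $\mathrm{Im}\,h_\#=\mathrm{span}(\partial_{x_1},\dots,\partial_{x_{n-1}})$, a parallel distribution. Thus establishing that the level hypersurfaces are parallel affine hyperplanes — equivalently, that the corank-one pseudo-Hessian constraint promotes recurrence along the leaves to recurrence in every direction — is the delicate point on which the whole argument rests, and the step where the assumption $\mathrm{rank}\,h_\#(m)=n-1$ must be used in an essential way.
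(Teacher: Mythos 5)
Your plan is structurally sound and your leafwise computation is correct: the total symmetry of $C(\al,\be,\ga)=(\na_{h_\#(\al)}h)(\be,\ga)$ does give $h_\#(\na_Z\eta)=0$ for all $Z\in\mathrm{Im}\,h_\#$, and your reduction (recurrence of $\eta$ $\Rightarrow$ $d\la=0$ $\Rightarrow$ a parallel generator $e^{-\psi}\eta=dx_n$ $\Rightarrow$ the displayed block form of $h$) is valid. But there is a genuine gap: the step you yourself single out as ``the hard part'' --- upgrading $\na_Z\eta\in K$ for $Z$ tangent to the leaves to $\na_X\eta\in K$ for \emph{all} $X$ --- is never proved, and it is precisely the mathematical content of the theorem; everything before and after it is routine. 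The paper closes exactly this gap as follows. Writing $X_i=h_\#(dx_i)$ in an affine chart, the fields $X_1,\dots,X_n$ pairwise commute (this is the content of the proof of Theorem \ref{theo1}), and the corank-one hypothesis lets one write $X_n=\sum_{j=1}^{n-1}f_jX_j$. Expanding $[X_i,X_n]=0$ and using invertibility of $(h_{ij})_{1\leq i,j\leq n-1}$ yields the first-order system
\begin{equation*}
f_l\,\partial_{x_n}(f_j)+\partial_{x_l}(f_j)=0,\qquad l,j=1,\dots,n-1,
\end{equation*}
whose diagonal case $f_j\partial_{x_n}(f_j)+\partial_{x_j}(f_j)=0$ is a stationary inviscid Burgers equation; Lemma \ref{le} (a flow/parallel-transport argument showing any such solution is constant) then forces all $f_j$ to be constant, so $y=f_1x_1+\cdots+f_{n-1}x_{n-1}-x_n$ is an affine coordinate with $h_\#(dy)=0$. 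This analytic ingredient, or some substitute for it, is unavoidable: nothing in your symmetry computation distinguishes corank one from the corank-two counterexample on $\R^4$, so the recurrence cannot follow from the algebra alone.

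A secondary soft spot is your final step. Saying ``integrate $g_L$ twice in $x_1,\dots,x_{n-1}$ with smooth-in-$x_n$ integration data'' is not an argument when $n-1>1$: leafwise existence of a potential comes from the Codazzi equation, but smooth dependence of a \emph{choice} of potential on the transverse parameter needs justification. The paper handles this cleanly by straightening the commuting fields, $h_\#(dx_i)=\partial_{z_i}$ for $i=1,\dots,n-1$, observing that $\sigma=\sum_{j=1}^{n-1}z_j\,dx_j$ is (leafwise) closed, and invoking the foliated Poincar\'e lemma to obtain a single smooth $f$ with $h^{ij}=\frac{\partial^2 f}{\partial x_i\partial x_j}$. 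So: same overall route as the paper in outline, but the two analytic devices that make it work --- Lemma \ref{le} for constancy of the $f_j$, and the foliated Poincar\'e lemma for the transversely smooth potential --- are missing from your proposal.
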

     
     \begin{corollary}\label{copar} Let $(M,\na,h)$ be a contravariant pseudo-Hessian manifold with $h$ of constant rank equal to $\dim M-1$. Then the affine foliation associated to $\mathrm{Im}h_\#$ is $\na$-parallel.
     	
     \end{corollary}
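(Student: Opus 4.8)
The plan is to deduce the corollary directly from the local normal form furnished by Theorem \ref{th2}, reducing the global statement to a chart-wise computation. Since $h$ has constant rank $n-1$ (with $n=\dim M$), every point of $M$ is regular and the hypotheses of Theorem \ref{th2} are satisfied at each $m\in M$. Fixing such an $m$, I would take the affine coordinates $(x_1,\ldots,x_n)$ supplied by that theorem, in which
\[ h=\sum_{i,j=1}^{n-1}h_{ij}\,\partial_{x_i}\otimes\partial_{x_j}, \]
with the block $(h_{ij})_{1\le i,j\le n-1}$ invertible.

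Next I would identify the distribution $\mathrm{Im}h_\#$ in these coordinates. Because $h_{in}=h_{nj}=0$, one has $h_\#(dx_n)=0$, while for $1\le j\le n-1$, $h_\#(dx_j)=\sum_{i=1}^{n-1}h_{ij}\,\partial_{x_i}$. As the matrix $(h_{ij})_{1\le i,j\le n-1}$ is invertible, the vectors $h_\#(dx_1),\ldots,h_\#(dx_{n-1})$ are linearly independent and span $\mathrm{span}(\partial_{x_1},\ldots,\partial_{x_{n-1}})$; hence on this chart $\mathrm{Im}h_\#=\mathrm{span}(\partial_{x_1},\ldots,\partial_{x_{n-1}})$.

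It then remains to verify parallelism on the chart. A section $Y$ of $\mathrm{Im}h_\#$ may be written $Y=\sum_{i=1}^{n-1}a_i\,\partial_{x_i}$ with $a_i\in C^\infty$. Since $(x_1,\ldots,x_n)$ is an affine coordinate system, $\na\partial_{x_i}=0$ for all $i$, so for any local vector field $X$,
\[ \na_X Y=\sum_{i=1}^{n-1}X(a_i)\,\partial_{x_i}, \]
which again lies in $\mathrm{span}(\partial_{x_1},\ldots,\partial_{x_{n-1}})=\mathrm{Im}h_\#$. Thus $\na_X Y$ is tangent to the foliation for every $X$ and every $Y\in\Ga(\mathrm{Im}h_\#)$ over the chart.

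Finally, being $\na$-parallel is a local condition on the distribution $\mathrm{Im}h_\#$: it suffices that $\na_XY\in\Ga(\mathrm{Im}h_\#)$ for sections over arbitrarily small neighbourhoods of each point. Since the charts furnished by Theorem \ref{th2} cover $M$ and the preceding computation holds on each of them, the affine foliation associated to $\mathrm{Im}h_\#$ is $\na$-parallel. The only point requiring a little care — and the only possible obstacle — is the passage from the chart-wise statement to the global one, which is resolved by the observation that parallelism is a pointwise closure property and that Theorem \ref{th2} applies at every regular point.
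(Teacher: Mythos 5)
Your proof is correct and follows essentially the same route as the paper, which states Corollary \ref{copar} as an immediate consequence of Theorem \ref{th2} via exactly the observation it makes earlier: once $h=\sum_{i,j=1}^{n-1}h_{ij}\,\partial_{x_i}\otimes\partial_{x_j}$ in affine coordinates with invertible block, $\mathrm{Im}h_\#=\mathrm{span}(\partial_{x_1},\ldots,\partial_{x_{n-1}})$ is spanned by $\na$-parallel fields, so the foliation is parallel. Your explicit verification of the chart-wise computation and the locality of parallelism merely spells out what the paper leaves implicit.
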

     In order to prove this theorem, we need the following lemma.
     \begin{lemma}\label{le} Let $f:\R^2\too\R$ be a differentiable function such that $\partial_x(f)+f\partial_{y}(f)=0$. Then $f$ is a constant.
     	
     \end{lemma}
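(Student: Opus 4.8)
The plan is to read the equation $\partial_x f+f\,\partial_y f=0$ as the inviscid Burgers equation and to exploit crucially the hypothesis that $f$ is defined on the whole plane $\R^2$. This global hypothesis is essential: on a proper open set the conclusion fails, as $f(x,y)=y/(1+x)$ on $\{x\neq-1\}$ shows (a direct computation gives $\partial_x f+f\,\partial_y f=0$). The strategy is therefore to prove that $f$ is constant along an explicit family of straight lines filling the plane, and then to observe that two such lines with distinct slopes must cross.

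First I would fix a point $p=(x_0,y_0)$, set $c:=f(p)$, and restrict $f$ to the line through $p$ of slope $c$ by introducing $\gamma(t):=f(x_0+t,\,y_0+ct)$, which is defined for every $t\in\R$ precisely because $f$ is defined everywhere. Differentiating and using the equation in the form $\partial_x f=-f\,\partial_y f$ gives
\[
\gamma'(t)=\partial_x f+c\,\partial_y f=(c-f)\,\partial_y f=(c-\gamma(t))\,b(t),
\]
where $b(t):=\partial_y f(x_0+t,\,y_0+ct)$. Hence $u:=\gamma-c$ solves the linear homogeneous ODE $u'=-b(t)\,u$ with $u(0)=\gamma(0)-c=0$, so the integrating-factor formula $u(t)=u(0)\exp\!\big(-\int_0^t b\big)=0$ forces $\gamma\equiv c$. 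In other words, $f$ takes the constant value $c=f(p)$ on the whole line $\{y-y_0=c(x-x_0)\}$.

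Finally I would argue by contradiction. If $f$ were not constant there would exist points $p_1,p_2$ with $c_1:=f(p_1)\neq c_2:=f(p_2)$. By the previous step $f\equiv c_1$ on the line $\ell_1$ through $p_1$ of slope $c_1$ and $f\equiv c_2$ on the line $\ell_2$ through $p_2$ of slope $c_2$. Since $c_1\neq c_2$ these lines are not parallel, so they meet at a unique point $r$, at which $f(r)$ would have to equal both $c_1$ and $c_2$; this contradiction shows that $f$ is constant.

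The only delicate point is the justification that $\gamma\equiv c$, i.e.\ uniqueness for the transport ODE along the line, and this is exactly where global definiteness and the regularity of $f$ enter. Treating $f$ as being of class $C^1$ (which holds in our smooth setting, since $h$ is a smooth bivector field), $b$ is continuous and the explicit formula $u(t)=u(0)\exp(-\int_0^t b)$ settles uniqueness without invoking any general existence theorem; because $\gamma$ lives on a straight line that never leaves $\R^2$, no finite-parameter breakdown of the characteristic can occur, which is what rules out the local counterexamples.
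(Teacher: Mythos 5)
Your proof is correct and, at its geometric core, rests on the same mechanism as the paper's: $f$ is constant along the characteristic line of slope $f(p)$ through each point $p$, and two such lines with distinct slopes must intersect, forcing $f$ to be constant. The difference is in how the first step is established. The paper constructs integral curves of the vector field $X_f=\partial_x+f\partial_y$, computes $y''=\partial_x f+f\partial_y f=0$ along them to conclude the flow is $\phi(t,(x,y))=(t+x,f(x,y)t+y)$, and then encodes the crossing-lines argument algebraically via $F(u+tF(u))=F(u)$ and a linear-independence contradiction. You instead bypass all existence theory for ODEs: you \emph{guess} the candidate line, restrict $f$ to it, and show constancy by the integrating-factor uniqueness argument for the linear equation $u'=-b(t)u$, $u(0)=0$. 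This is arguably cleaner and more self-contained, and your counterexample $f(x,y)=y/(1+x)$ on $\{x\neq -1\}$ is a genuinely valuable addition, since it shows the global hypothesis is not decorative (the paper uses it silently when solving $u-v=tF(u)+sF(v)$, which requires the lines to extend over all of $\R^2$). The one caveat, which you yourself flag, is regularity: your integrating factor needs $b(t)=\partial_y f$ along the line to be continuous (or at least locally integrable), so your argument proves the lemma for $C^1$ functions, whereas the lemma is stated for merely differentiable $f$; a partial derivative of a differentiable function of two variables restricted to a line need not be continuous or locally bounded, and for a linear ODE with non-integrable coefficient the conclusion $u\equiv 0$ can fail (e.g.\ $u(t)=t^2$ solves $u'=a(t)u$ with $a(t)=2/t$). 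The paper's route avoids this because Peano existence needs only continuity of $f$ and the computation $y''=0$ needs only differentiability. Since in the paper's application (the proof of Theorem \ref{th2}) the functions $f_j$ are smooth, your restriction is harmless there, but as a freestanding proof of the lemma as stated it covers a slightly smaller class than the paper's.
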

     \begin{proof} Let $f$ be a solution of the equation above. We consider the vector field $X_f=\partial_x+f\partial_y$. The integral curve $(x(t),y(t))$ of $X_f$ passing through $(a,b)\in\R^2$ satisfies
     	\[ x'(t)=1,\quad y'(t)=f(x(t),y(t))\esp (x(0),y(0))=(a,b). \]Now
     	\[ y''(t)=\partial_x(f)(x(t),y(t))+y'(t)\partial_y(f)(x(t),y(t))=0 \] and hence, the flow of $X_f$ is given by $\phi(t,(x,y))=(t+x,f(x,y)t+y)$. The relation $\phi(t+s,(x,y))=\phi(t,\phi(s,(x,y)))$ implies that the map
     	 $F(x,y)=(1,f(x,y))$ satisfies
     	\[ F(u+tF(u))=F(u),\;u\in\R^2,t\in\R. \] Let $u,v\in\R^2$ such that $F(u)$ and $F(v)$ are linearly independent. Then there exists $s,t\in\R$ such that $u-v=tF(u)+sF(v)$ and hence $F(u)=F(v)$ which is a contradiction. So $F(x,y)=\al(x,y)(a,b)$, i.e., $(1,f(x,y))=(\al(x,y)a,\al(x,y)b)$ and $\al$ must be constant and hence $f$ is constant.
     \end{proof}
     
    \paragraph{Proof of Theorem \ref{th2}}\begin{proof} Let $(x_1,\ldots,x_n)$ be an affine coordinates system near $m$ such that $(X_1,\ldots,X_{n-1})$ are linearly independent in a neighborhood of $m$, where $X_i=h_\#(dx_i)$, $X_n=\di\sum_{j=1}^{n-1}f_{j}X_j$ and, by virtue of the proof of Theorem \ref{theo1}, for any $1\leq i<j\leq n$, $[X_i,X_j]=0$. 
    For any $i=1,\ldots,n-1$, the relation $[X_i,X_n]=0$ is equivalent to 
    \[ X_i(f_j)=h_{in}\partial_{x_n}(f_j)+\sum_{l=1}^{n-1}h_{il}\partial_{x_l}(f_j)=0,\;\quad j=1,\ldots,n-1. \]
    But $h_{in}=X_n(x_i)=\di\sum_{l=1}^{n-1}f_lh_{il}$ and hence, for any $i,j=1,\ldots n-1$,
    \[ \sum_{l=1}^{n-1}h_{il}(f_l\partial_{x_n}(f_j)+\partial_{x_l}(f_j))=0. \]
   Or the matrix $(h_{ij})_{1\leq i,j\leq n-1}$ is invertible so we get
      \begin{equation}\label{c} f_l\partial_{x_n}(f_j)+\partial_{x_l}(f_j)=0,\quad l,j=1,\ldots,n-1. \end{equation}For $l=j$ we get that $f_j$ satisfies
    $f_j\partial_{x_n}(f_j)+\partial_{x_j}(f_j)=0$ so,  according to  Lemma \ref{le}, $\partial_{x_n}(f_j)=\partial_{x_j}(f_j)=0$ and, from \eqref{c}, $f_j=$constant. We consider $y=f_1x_1+\ldots+f_{n-1}x_{n-1}-x_n$, we have $h_\#(dy)=0$ and $(x_1,\ldots,x_{n-1},y)$ is an affine coordinates system around $m$.

    On the other hand,  there exists a coordinates system $(z_{1},\ldots,z_{n})$ such that
    \begin{equation*}
    h_{\sharp}(dx_{i})=\partial_{z_{i}},\text{ }i=1,\ldots,n-1.	
    \end{equation*} 
    We deduce that
    \begin{equation*}
    \partial_{x_{i}}=\sum_{j=1}^{n-1}h^{ij}\partial_{z_{j}},\text{ }i=1,\ldots,n-1,	
    \end{equation*}  
    with $h^{ij}=\frac{\partial z_{j}}{\partial x_{i}}$. We consider $\sigma=\sum_{j=1}^{n-1}z_{j}dx_{j}$. We have $d\sigma=0$ so according to the foliated Poincar\'e Lemma (see\cite[p.56]{Ca}) there exists a function $f$ such that $h^{ij}=\frac{\partial^{2}f}{\partial x_{i}\partial x_{j}}$.
    \end{proof}

    \subsection{The divergence and the modular class of a contravariant pseudo-Hessian manifold}
    
    We define now the divergence of a contravariant pseudo-Hessian structure.
    We recall first the definition of the divergence of multivector fields associated to a connection on a manifold.
    
    Let $(M,\nabla)$ be a manifold endowed with a connection. We define $\mathrm{div}_{\nabla}:\Gamma(\otimes^{p}TM)\rightarrow\Gamma(\otimes^{p-1}TM)$ by 
    \begin{equation*}
    \mathrm{div}_{\nabla}(T)(\alpha_{1},\ldots,\alpha_{p-1})=\sum_{i=1}^{n}\nabla_{e_{i}}(T)(e_{i}^{*},\alpha_{1},\ldots,\alpha_{p-1}),	
    \end{equation*}
    where $\alpha_{1},\ldots,\alpha_{p-1}\in T^{*}_{x}M$, $(e_{1},\ldots,e_{n})$ a basis of $T_{x}M$ and $(e_{1}^{*},\ldots,e_{n}^{*})$ its dual basis. This operator respects the symmetries of  tensor fields.

    Suppose now that $(M,\nabla,h)$ is a contravariant pseudo-Hessian manifold. The divergence of this structure is the vector field $\mathrm{div}_{\nabla}(h)$. 
    This vector field is an invariant of the pseudo-Hessian structure and has an important property. Indeed, let $\mathbf{d}_h:\Ga(\wedge^\bullet TM)\too \Ga(\wedge^{\bullet+1} TM)$ be the differential associated to the Lie algebroid structure $(T^*M,h_\#,\br_h)$ and given by
    \begin{eqnarray*}
    \mathbf{d}_hQ(\al_1,\ldots,\al_{p})&=&\sum_{j=1}^{p}(-1)^{j+1}h_\#(\al_j).Q(\al_1,\ldots,
    \hat{\al_j},\ldots\al_{p})\\&&
    +\sum_{1\leq i<j\leq p}(-1)^{i+j}Q([\al_i,\al_j]_h,\al_1,\ldots,\hat{\al_i},\ldots,\hat{\al_j},\ldots,\al_p).
    \end{eqnarray*}
    
    \begin{proposition}\label{co}$\mathbf{d}_h(\mathrm{div}_{\na}(h))=0$.\end{proposition}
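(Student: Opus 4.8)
The plan is to evaluate the $2$-vector field $\mathbf{d}_h(\mathrm{div}_\na(h))$ on coordinate differentials in an affine chart. Writing $V:=\mathrm{div}_\na(h)$, the defining formula for $\mathbf{d}_h$ gives, for $\al,\be\in\Om^1(M)$,
\[ \mathbf{d}_hV(\al,\be)=h_\#(\al)\langle\be,V\rangle-h_\#(\be)\langle\al,V\rangle-\langle[\al,\be]_h,V\rangle. \]
A short check, using $h_\#(f\al)=fh_\#(\al)$ and the Leibniz rule $[f\al,\be]_h=f[\al,\be]_h-h_\#(\be)(f)\al$, shows that the right-hand side is $C^\infty(M)$-bilinear in $(\al,\be)$; hence it suffices to prove that it vanishes when $\al=dx_a$ and $\be=dx_b$ for an affine coordinate system $(x_1,\ldots,x_n)$. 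Since $[dx_a,dx_b]_h=0$ (as established in the proof of Theorem \ref{theo1}), and writing $X_a:=h_\#(dx_a)=\sum_lh_{al}\partial_{x_l}$, the statement reduces to the single identity
\[ X_a(V^b)=X_b(V^a),\qquad V^b:=\langle dx_b,V\rangle. \]

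Next I would make the divergence explicit. In affine coordinates $\na\partial_{x_i}=0$, so $(\na_{\partial_{x_i}}h)(dx_j,dx_k)=\partial_{x_i}h_{jk}$ and therefore $V^b=\sum_i\partial_{x_i}h_{ib}$. Consequently
\[ X_a(V^b)=\sum_{i,l}h_{al}\,\partial_{x_l}\partial_{x_i}h_{ib}, \]
and the goal is exactly the symmetry of this expression under $a\leftrightarrow b$.

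The heart of the proof is to produce this symmetry from the contravariant Codazzi equation \eqref{eq21}. Taking \eqref{eq21} in the form $\sum_l(h_{al}\partial_{x_l}h_{ib}-h_{il}\partial_{x_l}h_{ab})=0$, I would differentiate with respect to $x_i$ and sum over $i$; repeating the operation with $a$ and $b$ interchanged and subtracting, the symmetry $h_{ab}=h_{ba}$ causes every term carrying the factor $h_{ab}$ to cancel, leaving
\[ \big(X_a(V^b)-X_b(V^a)\big)+\Big(\sum_{i,l}\partial_{x_i}h_{al}\,\partial_{x_l}h_{ib}-\sum_{i,l}\partial_{x_i}h_{bl}\,\partial_{x_l}h_{ia}\Big)=0. \]
The step I expect to be the main obstacle is the second bracket, but it vanishes by a purely formal manipulation: swapping the dummy indices $i\leftrightarrow l$ in its first sum and using $h_{al}=h_{la}$, $h_{ib}=h_{bi}$ turns it into its second sum. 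Hence $X_a(V^b)=X_b(V^a)$, so $\mathbf{d}_hV=0$, as claimed.
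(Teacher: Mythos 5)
Your proof is correct and follows essentially the same route as the paper's: both reduce, via $[dx_a,dx_b]_h=0$ in an affine chart, to the symmetry of $X_a(V^b)$ in $a,b$, obtain it by differentiating the contravariant Codazzi identity \eqref{eq21}, and kill the remaining quadratic first-derivative terms by exactly the same $i\leftrightarrow l$ index swap that ends the paper's computation. The only (cosmetic) difference is that the paper packages the intermediate bookkeeping in covariant-derivative language, using flatness to trade $\na_{h_\#(\al)}\na_{\partial_{x_i}}h$ for $\na_{[h_\#(\al),\partial_{x_i}]}h$, whereas you differentiate the scalar identity \eqref{eq21} directly.
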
 
    \begin{proof} Let $(x_1,\ldots,x_n)$ be an affine coordinates system. We have
    \begin{eqnarray*}
    	\mathbf{d}_h\mathrm{div}_\na(h)(\al,\be)&=&\sum_{i=1}^n\left( h_\#(\al).\na_{\partial_{x_i}}(h)(dx_i,\be)-
    	h_\#(\be).\na_{\partial_{x_i}}(h)(dx_i,\al)
    	-\na_{\partial_{x_i}}(h)(dx_i,\na_{h_\#(\al)}\be)+\na_{\partial_{x_i}}(h)(dx_i,\na_{h_\#(\be)}\al)\right)\\
    	&=&\sum_{i=1}^n\left(\na_{h_\#(\al)}\na_{\partial_{x_i}}(h)(dx_i,\be)-\na_{h_\#(\be)}\na_{\partial_{x_i}}(h)(dx_i,\al)\right)\\
    	&\stackrel{\eqref{codazzib}}=&\sum_{i=1}^n\left(\na_{[h_\#(\al),\partial_{x_i}]}(h)(dx_i,\be)-\na_{[h_\#(\be),\partial_{x_i}]}(h)(dx_i,\al)\right).
    \end{eqnarray*}If we take $\al=dx_l$ and $\be=dx_k$, we have
    \[ [\partial_{x_i},h_\#(dx_l)]=\sum_{m=1}^n\partial_{x_i}(h_{ml})\partial_{x_m} \]
    and hence
    \begin{eqnarray*}
    	\mathbf{d}_h\mathrm{div}_\na(h)(\al,\be)&=&\sum_{i,m=1}^n\left(\partial_{x_i}(h_{ml})\partial_{x_m}(h_{ik}) -\partial_{x_i}(h_{mk})\partial_{x_m}(h_{il})    \right)=0.
    \end{eqnarray*}\end{proof}
    
    Let $(M,\na,h)$ be an orientable contravariant pseudo-Hessian manifold and $\Om$ a volume form on $M$. For any $f$ we denote by $X_f=h_\#(df)$ and we define $\mathbf{M}_\Om:C^\infty(M,\R)\too C^\infty(M,\R)$ by putting for any $f\in C^\infty(M,\R)$,
    \[ \na_{X_f}\Om=\mathbf{M}_\Om(f)\Om. \]
    It is obvious that $\mathbf{M}_\Om$ is a derivation and hence a vector field and
    $\mathbf{M}_{e^f\Om}=X_f+\mathbf{M}_\Om$.
    Moreover, if $(x_1,\ldots,x_n)$ is an affine coordinates system and $\mu=\Om(\partial_{x_1},\ldots,\partial_{x_n})$ then
    \[ \na_{X_f}\Om(\partial_{x_1},\ldots,\partial_{x_n})=X_f(\mu)=X_{\ln|\mu|}(f)\mu. \] So in the coordinates system $(x_1,\ldots,x_n)$, we have $\mathbf{M}_\Om=X_{\ln|\mu|}$. This implies $\mathbf{d}_h\mathbf{M}_\Om=0.$
    The cohomology class of $\mathbf{M}_\Om$ doesn't depend on $\Om$ and we call it the {\it modular class} of $(M,\na,h)$.
    \begin{proposition} The modular class of $(M,\na,h)$ vanishes if and only if there exists a volume form $\Om$ such that $\na_{X_f}\Om=0$ for any $f\in C^\infty(M,\R)$.
    	\end{proposition}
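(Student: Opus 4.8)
The plan is to read off the vanishing of the modular class as $\mathbf{d}_h$-exactness of $\mathbf{M}_\Om$, and then to produce the required volume form by a conformal rescaling, using the transformation rule $\mathbf{M}_{e^f\Om}=X_f+\mathbf{M}_\Om$ recorded just above the statement. The first step is to identify the $\mathbf{d}_h$-coboundary of a function. For $g\in C^\infty(M,\R)$, regarded as a $0$-cochain of the complex $(\Ga(\wedge^\bullet TM),\mathbf{d}_h)$, the defining formula of $\mathbf{d}_h$ gives, for every $\al\in\Om^1(M)$,
\[
\mathbf{d}_h g(\al)=h_\#(\al).g=h(\al,dg)=\al(X_g),
\]
so that $\mathbf{d}_h g=X_g=h_\#(dg)$ as a vector field. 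Since $\mathbf{d}_h\mathbf{M}_\Om=0$ has been established just before the statement, the modular vector field $\mathbf{M}_\Om$ is a genuine $1$-cocycle, and its class vanishes precisely when $\mathbf{M}_\Om=\mathbf{d}_h g=X_g$ for some $g\in C^\infty(M,\R)$.

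With this identification both implications are immediate. For the easy direction, suppose there is a volume form $\Om'$ with $\na_{X_f}\Om'=0$ for all $f$; then $\mathbf{M}_{\Om'}=0$ by its very definition, and since the cohomology class of the modular vector field is independent of the chosen volume form, the modular class equals $[\mathbf{M}_{\Om'}]=0$. For the converse, assume the modular class vanishes, so that $\mathbf{M}_\Om=X_g$ for some function $g$ by the discussion above. Set $\Om'=e^{-g}\Om$ and apply the relation $\mathbf{M}_{e^f\Om}=X_f+\mathbf{M}_\Om$ with $f=-g$; using the linearity of $h_\#\circ d$, which gives $X_{-g}=-X_g$, we obtain
\[
\mathbf{M}_{\Om'}=X_{-g}+\mathbf{M}_\Om=-X_g+X_g=0,
\]
whence $\na_{X_f}\Om'=\mathbf{M}_{\Om'}(f)\Om'=0$ for every $f\in C^\infty(M,\R)$.

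I expect no serious obstacle: the argument is entirely driven by the two structural facts already in hand, namely the cocycle property $\mathbf{d}_h\mathbf{M}_\Om=0$ and the conformal transformation rule for $\mathbf{M}_\Om$. The only point demanding a short verification is the identity $\mathbf{d}_h g=X_g$, which converts the abstract exactness condition into the explicit form $\mathbf{M}_\Om=X_g$ that is needed to construct $\Om'$; once this is available, the rescaling by $e^{-g}$ does all the work.
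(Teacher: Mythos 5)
Your proof is correct and is precisely the argument the paper intends: the proposition is stated without an explicit proof because it follows from the two facts established immediately before it, namely $\mathbf{d}_h\mathbf{M}_\Om=0$ and $\mathbf{M}_{e^f\Om}=X_f+\mathbf{M}_\Om$, together with the observation $\mathbf{d}_h g=X_g$, exactly as you argue. Your identification of the $0$-cochain coboundary and the rescaling $\Om'=e^{-g}\Om$ are both accurate, so there is nothing to add.
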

   By analogy with the case of Poisson manifolds, one can ask if it is possible to find a volume form $\Om$ such that $\Li_{X_f}\Om=0$ for any $f\in C^\infty(M,\R)$. The following proposition gives a negative answer to this question unless $h=0$.
    
    \begin{proposition} Let $(M,\na,h)$ be an orientable contravariant pseudo-Hessian manifold. Then:
    	\begin{enumerate}
    		\item For any volume form $\Om$ and any $f\in C^\infty(M,\R)$,
    		\[ \Li_{X_f}\Om=\left[ \mathbf{M}_\Om(f)+\dev_\na(h)(f)+\prec h,\mathrm{Hess}(f)\succ   \right]\Om, \]where $\mathrm{Hess}(f)(X,Y)=\na_X(df)(Y)$ and $\prec h,\mathrm{Hess}(f)\succ$
    		is the pairing between the bivector field $h$ and the 2-form $\mathrm{Hess}(f)$.
    		\item There exists a volume form $\Om$ such that $\Li_{X_f}\Om=0$ for any $f\in C^\infty(M,\R)$ if and only if $h=0$.
    	\end{enumerate}
    	
    \end{proposition}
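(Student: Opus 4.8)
The plan is to reduce the Lie derivative of $\Om$ to its covariant analogue plus a divergence term. For any torsion-free connection and any top-degree form one has the pointwise identity
\[ \Li_X\Om=\na_X\Om+\dev_\na(X)\,\Om, \]
which I would prove by evaluating both sides on a local frame $(Y_1,\ldots,Y_n)$: subtracting $(\na_X\Om)(Y_1,\ldots,Y_n)$ from $(\Li_X\Om)(Y_1,\ldots,Y_n)$ leaves $\sum_i\Om(Y_1,\ldots,\na_XY_i-[X,Y_i],\ldots,Y_n)$, and torsion-freeness rewrites $\na_XY_i-[X,Y_i]=\na_{Y_i}X$, so the difference is the trace of $Y\mapsto\na_YX$ times $\Om$, i.e. $\dev_\na(X)\,\Om$. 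Specializing to $X=X_f=h_\#(df)$ and inserting the definition $\na_{X_f}\Om=\mathbf{M}_\Om(f)\Om$ gives $\Li_{X_f}\Om=\big(\mathbf{M}_\Om(f)+\dev_\na(X_f)\big)\Om$, so everything comes down to identifying $\dev_\na(X_f)$.

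For this I would pass to an affine coordinates system, where $X_f^l=\sum_k h_{lk}\partial_{x_k}f$ and $\dev_\na(X_f)=\sum_l\partial_{x_l}X_f^l$. Expanding by the Leibniz rule,
\[ \dev_\na(X_f)=\sum_{k}\Big(\sum_l\partial_{x_l}h_{lk}\Big)\partial_{x_k}f+\sum_{l,k}h_{lk}\,\partial_{x_l}\partial_{x_k}f. \]
The first sum is exactly $\dev_\na(h)(f)$ in these coordinates, while the second is the pairing $\prec h,\mathrm{Hess}(f)\succ$, since there $\mathrm{Hess}(f)_{lk}=\partial_{x_l}\partial_{x_k}f$. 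Assembling the three contributions yields assertion $(1)$; each term being intrinsically defined, the coordinate computation delivers a coordinate-free statement.

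For $(2)$, the implication $h=0\Rightarrow\Li_{X_f}\Om=0$ is immediate because then $X_f=h_\#(df)=0$ for every $f$. Conversely, suppose $\Li_{X_f}\Om=0$ for all $f$. By $(1)$ the second-order linear operator $L(f):=\mathbf{M}_\Om(f)+\dev_\na(h)(f)+\prec h,\mathrm{Hess}(f)\succ$ vanishes identically, and the value $L(f)(p)$ depends only on the $2$-jet of $f$ at $p$ while its first two terms involve only the first derivatives of $f$. Given $p\in M$ and $\xi\in T_p^*M$, I would take affine coordinates centred at $p$ and a global $f\in C^\infty(M,\R)$ whose $2$-jet at $p$ is that of $\tfrac12\ell^2$ with $\ell=\sum_i\xi_i x_i$ (a cut-off extension is harmless, since only the $2$-jet enters). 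Then $df(p)=0$ annihilates the first two terms, whereas $\mathrm{Hess}(f)(p)=\xi\otimes\xi$ gives $L(f)(p)=h_p(\xi,\xi)$. Hence $h_p(\xi,\xi)=0$ for every $p$ and every $\xi$, and since $h$ is symmetric, polarization forces $h=0$.

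The only genuinely delicate point is the separation of the first- and second-order parts of $L$ in $(2)$: this is precisely what the test functions $\tfrac12\ell^2$, with vanishing first derivatives and prescribed Hessian at $p$, are designed to accomplish, converting the identical vanishing of $L$ into the vanishing of its principal symbol $\xi\mapsto h_p(\xi,\xi)$.
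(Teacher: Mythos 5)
Your proof is correct, and it matches the paper closely on part (1) while taking a genuinely different route on part (2). For part (1), the paper expands $\Li_{X_f}\Om$ directly on an affine coordinate frame by computing the brackets $[X_f,\partial_{x_i}]$; this is exactly the torsion-free manipulation you package into the general identity $\Li_X\Om=\na_X\Om+\dev_\na(X)\,\Om$ followed by the coordinate evaluation $\dev_\na(X_f)=\dev_\na(h)(f)+\prec h,\mathrm{Hess}(f)\succ$, so your version is a mild (and arguably cleaner) reorganization of the same calculation. For part (2) the paper uses no test functions at all: it notes that $\mathbf{M}_\Om$ and $\dev_\na(h)$ are derivations while the Hessian term obeys the second-order Leibniz rule $\prec h,\mathrm{Hess}(fg)\succ=f\prec h,\mathrm{Hess}(g)\succ+g\prec h,\mathrm{Hess}(f)\succ+\prec h,df\odot dg\succ$, so applying the identically vanishing operator $L(f)=\mathbf{M}_\Om(f)+\dev_\na(h)(f)+\prec h,\mathrm{Hess}(f)\succ$ to products $fg$ kills the derivation parts and leaves $\prec h,df\odot dg\succ=0$ for all $f,g$, whence $h=0$. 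You instead extract the principal symbol of $L$ pointwise, choosing $f$ with the $2$-jet of $\tfrac12\ell^2$ at $p$ (vanishing first derivatives, Hessian $\xi\otimes\xi$ in affine coordinates) to get $h_p(\xi,\xi)=0$ and concluding by polarization. Both arguments isolate the same second-order symbol $\xi\mapsto h_p(\xi,\xi)$: the paper's is a one-line algebraic identity exploiting the derivation structure of the first-order terms, while yours is more elementary and makes the symbol-theoretic mechanism explicit, at the small cost of the jet-extension bookkeeping, which you handle correctly via cut-offs. Your argument is complete as written.
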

    
    \begin{proof}\begin{enumerate}
    		\item 
    	 Let $(x_1,\ldots,x_n)$ be an affine coordinates system. Then:
    \begin{eqnarray*}
    	\;[X_f,\partial_{x_i}]&=&\sum_{l,j=1}^n[\partial_{x_j}(f)h_{jl}\partial_{x_l},\partial_{x_i}]\\
    	&=&-\sum_{l,j=1}^n\left(h_{jl}\partial_{x_i}\partial_{x_j}(f)+\partial_{x_j}(f)\partial_{x_i}(h_{jl})   \right)\partial_{x_l},\\
    \Li_{X_f}\Om(\partial_{x_1},\ldots,\partial_{x_n})&=&(\na_{X_f}\Om)(\partial_{x_1},\ldots,\partial_{x_n})-\sum_{i=1}^n\Om((\partial_{x_1},\ldots,[X_f,\partial_{x_i}],\ldots,\partial_{x_n}))\\
    &=&(\na_{X_f}\Om)(\partial_{x_1},\ldots,\partial_{x_n})+
    \sum_{i,j=1}^n\left(h_{ji}\partial_{x_i}\partial_{x_j}(f)+\partial_{x_j}(f)\partial_{x_i}(h_{ji})   \right)\Om(\partial_{x_1},\ldots,\partial_{x_n})
    \end{eqnarray*}	and the formula follows since $\dev_\na(h)=\di\sum_{i,j=1}^n\partial_{x_i}(h_{ji})\partial_{x_j}$.
    \item This is a consequence of the fact that $\mathbf{M}_\Om$ and $\dev_\na(h)$ are derivation and
    \[ \prec h,\mathrm{Hess}(fg)\succ=f\prec h,\mathrm{Hess}(g)\succ+g\prec h,\mathrm{Hess}(f)\succ+\prec h,df\odot dg\succ. \]
    
    	\end{enumerate}	
    \end{proof}

    \section{The tangent bundle of a contravariant pseudo-Hessian manifold}\label{section3}
    
    In this section, we define and study the associated Poisson tensor on the tangent bundle of a contravariant pseudo-Hessian manifold. One can see \cite{dom} for the classical properties of the tangent bundle of a manifold endowed with a linear connection.
    
    Let $(M,\na)$ be an affine manifold, $p:TM\too M$ the canonical projection and $K:TTM\too TM$ the connection map of $\na$ locally given   by
    \[ K\left( \sum_{i=1}^n b_i\partial_{x_i}+\sum_{j=1}^nZ_j\partial_{\mu_j}\right)=
    \sum_{l=1}^n\left( Z_l+\sum_{i=1}^n\sum_{j=1}^n b_i\mu_j\Ga_{ij}^l\right)\partial_{x_l}, \]where $(x_1,\ldots,x_n)$ is a system of local coordinates,   $(x_1,\ldots,x_n,\mu_1,\ldots,\mu_n)$ the associated system of coordinates on $TM$ and $\na_{\partial_{x_i}} \partial_{x_j}=\sum_{l=1}^n\Ga_{ij}^l \partial_{x_l}$. Then
    \[ TTM=\ker Tp\oplus \ker K. \]
     For  $X\in\Ga(TM)$, we denote by $X^h$ its horizontal lift and by $X^v$ its vertical lift.  The flow of $X^v$ is given by $\Phi^X(t,(x,u))=(x,u+tX(x))$ and  $X^h(x,u)=h^{(x,u)}(X(x))$, where $h^{(x,u)}:T_xM\too \ker K(x,u)$ is the inverse of the restriction of $dp$ to $\ker K(x,u)$. Since the curvature of $\na$ vanishes, we have
    \begin{equation}\label{br} [X^h,Y^h]=[X,Y]^h,\;[X^h,Y^v]=(\na_XY)^v \esp[X^v,Y^v]=0, \end{equation}for any $X,Y\in\Ga(TM)$. For any $\al\in\Om^1(M)$, we define $\al^v,\al^h\in\Om^1(TM)$  by
    \[ \begin{cases}
    \al^v(X^v)=\al(X)\circ p,\\\al^v(X^h)=0,
    \end{cases}\esp \begin{cases}
    \al^h(X^h)=\al(X)\circ p,\\\al^h(X^v)=0.
    \end{cases} \]
    The following proposition is well-known \cite{dom} and can be proved easily.
  \begin{proposition} The connection $\overline{\na}$  on  $TM$ given  by
  	\begin{equation}\label{con} \overline{\na}_{X^h}Y^h=(\na_XY)^h,\;\overline{\na}_{X^h}Y^v=(\na_XY)^v\esp\overline{\na}_{X^v}Y^h=\overline{\na}_{X^v}Y^v=0, \end{equation}where $X,Y\in\Ga(TM)$, defines an affine structure on $TM$. Moreover, the endomorphism vector field $J:TTM\too TTM$ given by $JX^h=X^v$ and $JX^v=-X^h$ satisfies $J^2=-\mathrm{Id}_{TTM}$,  is parallel with respect to $\overline{\na}$ and hence defines a complex structure on $TM$.
  	
  \end{proposition}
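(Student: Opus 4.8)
The plan is to reduce the entire statement to a bookkeeping verification on lifted vector fields. First I would record the structural fact that if $(X_1,\ldots,X_n)$ is a local frame of $\Ga(TM)$, then $(X_1^h,\ldots,X_n^h,X_1^v,\ldots,X_n^v)$ is a local frame of $\Ga(TTM)$; consequently the formulas \eqref{con}, which prescribe $\overline{\na}$ on all horizontal and vertical lifts, determine a unique linear connection on $TM$ (the standard extension of a connection from its values on a frame via $C^\infty(TM)$-linearity and the Leibniz rule). Since torsion and curvature are tensorial, it then suffices to evaluate them on lifts, where the bracket relations \eqref{br} and the flatness of $\na$ do all the work.

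For the affine (flat torsion-free) structure I would first check torsion-freeness by comparing $\overline{\na}_UV-\overline{\na}_VU$ with $[U,V]$ on the three types of lift pairs: for two horizontal lifts this is exactly the torsion-freeness of $\na$, giving $(\na_XY-\na_YX)^h=[X,Y]^h$; for a horizontal and a vertical lift both sides equal $(\na_XY)^v$; for two vertical lifts both sides vanish. For flatness I would compute $\overline{R}(U,V)W=\overline{\na}_U\overline{\na}_VW-\overline{\na}_V\overline{\na}_UW-\overline{\na}_{[U,V]}W$ on lifts. The only genuinely nontrivial combinations are $\overline{R}(X^h,Y^h)Z^h$ and $\overline{R}(X^h,Y^h)Z^v$, which by \eqref{con} and \eqref{br} reduce to $(R^\na(X,Y)Z)^h$ and $(R^\na(X,Y)Z)^v$ respectively and hence vanish because the curvature of $\na$ is zero. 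In every case where one of the first two arguments is a vertical lift, each of the three terms vanishes individually, using $\overline{\na}_{X^v}Y^h=\overline{\na}_{X^v}Y^v=0$ and $[X^v,Y^v]=0$ together with $[X^h,Y^v]=(\na_XY)^v$. This establishes that $\overline{\na}$ is flat and torsion-free.

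For the endomorphism $J$, the identities $J^2X^h=JX^v=-X^h$ and $J^2X^v=-JX^h=-X^v$ give $J^2=-\mathrm{Id}_{TTM}$ immediately, so $J$ is an almost complex structure. Parallelism $\overline{\na}J=0$ is again checked on lifts straight from \eqref{con}: for instance $\overline{\na}_{X^h}(JY^h)=\overline{\na}_{X^h}Y^v=(\na_XY)^v=J(\na_XY)^h=J(\overline{\na}_{X^h}Y^h)$, and the three remaining combinations ($X^h,Y^v$ and $X^v,Y^h$ and $X^v,Y^v$) are handled identically. The one conceptual point, which I regard as the main (though entirely standard) obstacle, is the passage from \emph{almost} complex to a \emph{genuine} complex structure: here I would use that an almost complex structure parallel with respect to a torsion-free connection has vanishing Nijenhuis tensor. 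Concretely, expanding $N_J(U,V)=[JU,JV]-J[JU,V]-J[U,JV]-[U,V]$ with $[\,\cdot\,,\cdot\,]=\overline{\na}_\bullet\bullet-\overline{\na}_\bullet\bullet$ and then applying $\overline{\na}J=0$ and $J^2=-\mathrm{Id}_{TTM}$, all terms cancel in pairs, so $N_J=0$; the Newlander--Nirenberg theorem then upgrades $J$ to an integrable complex structure on $TM$, completing the proof.
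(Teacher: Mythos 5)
Your verification is correct, but note that the paper itself gives no proof at all: it states the proposition as ``well-known'' with a reference to Dombrowski \cite{dom}, so your write-up is filling a deliberate gap rather than paralleling an argument in the text. Your computations are the standard ones and they check out: well-definedness of $\overline{\na}$ from \eqref{con} (the prescriptions are compatible with $(fX)^h=(f\circ p)X^h$, $(fX)^v=(f\circ p)X^v$ because $X^h(f\circ p)=(Xf)\circ p$ and $X^v(f\circ p)=0$), torsion-freeness and flatness via \eqref{br} (where you correctly use that $[X^h,Y^h]=[X,Y]^h$ with no vertical curvature correction, precisely because $\na$ is flat), and parallelism of $J$. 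The one place where you reach for heavy machinery --- vanishing of $N_J$ plus Newlander--Nirenberg --- is valid but avoidable, and avoiding it is likely what the authors mean by ``can be proved easily'': since $\overline{\na}$ is flat and torsion-free, $TM$ admits an atlas of affine coordinates, and a $\overline{\na}$-parallel $J$ has \emph{constant} coefficients in each such chart; a linear change of coordinates puts it in the standard form $J_0$, and the transition maps, being affine and commuting with $J_0$, are complex-affine, hence holomorphic. This elementary route not only bypasses Newlander--Nirenberg but proves the sharper statement implicit in the paper (cf.\ the identification $T\R^n=\C^n$ in Remark \ref{rem1}) that $(TM,\overline{\na},J)$ is a complex \emph{affine} manifold, not merely a complex one.
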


    Let $h$ be a symmetric bivector field on $M$. We associate to $h$ a skew-symmetric bivector field $\Pi$  on $TM$  by putting
    \[ 
    \Pi(\al^v,\be^v)=\Pi(\al^h,\be^h)=0\esp 
    \Pi(\al^h,\be^v)=-\Pi(\be^v,\al^h)=h(\al,\be)\circ p,
    \]for any $\al,\be\in\Om^1(M)$. 
    For any  $\al\in\Om^1(M)$, 
    \begin{equation}\label{eqpi}
   \Pi_\#(\al^v)=-h_\#(\al)^h\esp \Pi_\#(\al^h)=h_\#(\al)^v. \end{equation}
    To prove one of our main result in this section, we need the following proposition which is a part of the folklore.
    \begin{proposition}\label{folk} Let $(P,\na)$ be a manifold endowed with a torsionless connection and $\pi$ is a bivector field on $P$. Then the Nijenhuis-Schouten bracket $[\pi,\pi]$ is given by 
    	\begin{eqnarray*}
    		\;[\pi,\pi](\al,\be,\ga)
    		&=&2\left( \na_{\pi_\#(\al)}\pi(\be,\ga)+\na_{\pi_\#(\be)}\pi(\ga,\al)+\na_{\pi_\#(\ga)}\pi(\al,\be) \right).
    	\end{eqnarray*}
    	
    \end{proposition}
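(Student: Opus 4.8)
The plan is to exploit that \emph{both} members of the claimed identity are tensorial, i.e. $C^\infty(P)$-multilinear and skew-symmetric in $(\al,\be,\ga)$, so that it suffices to compare them on a coordinate coframe at an arbitrary fixed point $m\in P$. The left-hand side is by definition the trivector $[\pi,\pi]$ evaluated on three $1$-forms, hence is tensorial. For the right-hand side the key remark is that $\na_X\pi$ is again a skew-symmetric bivector field, that $X\mapsto\na_X\pi$ is $C^\infty$-linear in $X$, and that $\pi_\#(f\al)=f\pi_\#(\al)$; together these make each of the three summands $C^\infty$-linear in every argument. First I would record this, so that both members are well-defined trivectors and it is enough to evaluate them on $(dx_i,dx_j,dx_k)$ at $m$, in any chosen local chart.

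Second, I would recall the classical coordinate expression of the Schouten bracket. Writing $\pi=\sum_{a<b}\pi^{ab}\partial_{x_a}\we\partial_{x_b}$ with $\pi^{ab}=\pi(dx_a,dx_b)$ and $\{f,g\}:=\pi(df,dg)$, the bracket $\tfrac12[\pi,\pi]$ is the Jacobiator of $\{\,,\,\}$, which on coordinate functions yields
\[
[\pi,\pi](dx_i,dx_j,dx_k)=2\sum_{l}\Big(\pi^{il}\partial_{x_l}\pi^{jk}+\pi^{jl}\partial_{x_l}\pi^{ki}+\pi^{kl}\partial_{x_l}\pi^{ij}\Big).
\]
It then remains to show that the right-hand side of the proposition reduces to the same expression at $m$. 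This is where torsionlessness enters: since $\na$ is symmetric, around $m$ there is a coordinate system (geodesic normal coordinates for $\na$) whose Christoffel symbols vanish at $m$, so $\na dx_a$ vanishes at $m$ for all $a$. Consequently $\na_{\partial_{x_l}}\pi(dx_a,dx_b)|_m=\partial_{x_l}\pi^{ab}|_m$, and since $\pi_\#(dx_i)=\sum_l\pi^{il}\partial_{x_l}$ we get $\na_{\pi_\#(dx_i)}\pi(dx_j,dx_k)|_m=\sum_l\pi^{il}\partial_{x_l}\pi^{jk}|_m$. Summing cyclically and multiplying by $2$ reproduces exactly the displayed formula, so both sides agree at $m$; as $m$ is arbitrary, the identity holds.

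I expect the only genuine subtlety to be pinning down \emph{where} torsionlessness is used: it is needed precisely to guarantee coordinates killing the Christoffel symbols at a point, equivalently the symmetry of the Hessian $\na_X(df)(Y)=\na_Y(df)(X)$. To make this transparent, the intrinsic variant I would keep in reserve works directly on exact forms: expanding $\na_{\pi_\#(df)}\pi(dg,dh)=\{f,\{g,h\}\}-\pi(\na_{\pi_\#(df)}dg,dh)-\pi(dg,\na_{\pi_\#(df)}dh)$ and summing cyclically, the first terms assemble into $\tfrac12[\pi,\pi](df,dg,dh)$ via the Jacobiator, while the correction terms collect into expressions of the form $\pm\,\na_{X}(df)(Y)$ that cancel in pairs \emph{exactly} because the Hessian is symmetric. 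This confirms that torsionlessness is the sole hypothesis doing the work and that no curvature term survives, which is the point I would emphasize as the crux of the argument.
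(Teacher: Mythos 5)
The paper never proves Proposition \ref{folk}: it is stated with the remark that it ``is a part of the folklore'' and used directly in the proof of Theorem \ref{poisson}, so there is no in-paper argument to compare yours with --- your proposal would actually supply the missing proof. On its merits it is correct and well organized: the tensoriality step is sound (since $\na_{fX}\pi=f\na_X\pi$ and $\na_X\pi$ is again a bivector field, each cyclic summand is $C^\infty$-multilinear, and the cyclic sum is totally skew), normal coordinates for a torsionless connection do kill the Christoffel symbols at the chosen point so that $\na_{\pi_\#(dx_i)}\pi(dx_j,dx_k)|_m=\sum_l\pi^{il}\partial_{x_l}\pi^{jk}|_m$, and your ``reserve'' intrinsic computation is exactly the right way to isolate the role of the hypothesis: expanding $\na_{X_f}\pi(dg,dh)=\{f,\{g,h\}\}+\mathrm{Hess}(g)(X_f,X_h)-\mathrm{Hess}(h)(X_f,X_g)$ and summing cyclically, the six Hessian terms cancel in pairs precisely because $\mathrm{Hess}$ is symmetric for a torsionless connection, leaving the Jacobiator-type sum.

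The one blemish you should repair is a sign inconsistency between your two middle assertions. With $\{x_i,x_j\}=\pi^{ij}$ one has $\{x_i,\{x_j,x_k\}\}=\sum_l\pi^{il}\partial_{x_l}\pi^{jk}$, whereas the Jacobiator $\sum_{\mathrm{cyc}}\{\{f,g\},h\}$ evaluated on coordinates gives $\sum_l\pi^{li}\partial_{x_l}\pi^{jk}+\cdots$, the \emph{negative} of the display you wrote (recall $\pi^{il}=-\pi^{li}$). So your displayed coordinate formula is consistent with the convention $[\pi,\pi](df,dg,dh)=2\sum_{\mathrm{cyc}}\{f,\{g,h\}\}$ but not with your stated normalization ``$\tfrac12[\pi,\pi]$ is the Jacobiator $\sum_{\mathrm{cyc}}\{\{f,g\},h\}$''; under the latter (e.g.\ the convention of Dufour--Zung, which the paper cites) the proposition's right-hand side equals $-[\pi,\pi](\al,\be,\ga)$. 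This is purely a matter of fixing one of the two standard sign conventions for the Schouten bracket and keeping it throughout --- and it is immaterial for the only use the paper makes of the proposition, namely the equivalence $[\Pi,\Pi]=0\Longleftrightarrow\sum_{\mathrm{cyc}}\overline{\na}_{\Pi_\#(\al)}\Pi(\be,\ga)=0$ in Theorem \ref{poisson} --- but as written your proof asserts two displays that contradict each other by an overall sign, so pick a convention and state it once.
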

    
    \begin{theorem}\label{poisson} The following assertions are equivalent:
    	\begin{enumerate}
    		\item[$(i)$] $(M,\na,h)$ is a contravariant pseudo-Hessian manifold.
    		\item[$(ii)$] $(TM,\Pi)$ is a Poisson manifold.\end{enumerate}
    In this case, if 	 $L$ is a   leaf of $\mathrm{Im}h_\#$ then $TL\subset TM$ is a symplectic leaf of $\Pi$  which is also a complex submanifold of $TM$. Moreover, if $\om_L$ is the symplectic form of $TL$ induced by $\Pi$ and $g_L$ is the pseudo-Riemannian metric given by $g_L(U,V)=\om(JU,V)$ then $(TL,g_L,\om_L,J)$ is a pseudo-K\"ahler manifold.

    \end{theorem}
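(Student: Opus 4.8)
The plan is to establish the three assertions in turn. For the equivalence $(i)\Leftrightarrow(ii)$ I would express the Poisson condition as $[\Pi,\Pi]=0$ and evaluate the Schouten bracket with Proposition \ref{folk}, applied to the torsionless connection $\overline{\na}$ of \eqref{con}. Because $[\Pi,\Pi]$ is tensorial it is enough to test it on triples of the generating forms $\al^v,\al^h$, so the first step is to record how $\overline{\na}$ acts: from \eqref{con} one gets $\overline{\na}_{X^h}\al^h=(\na_X\al)^h$, $\overline{\na}_{X^h}\al^v=(\na_X\al)^v$ and $\overline{\na}_{X^v}\al^h=\overline{\na}_{X^v}\al^v=0$, hence $(\overline{\na}_{X^h}\Pi)(\al^h,\be^v)=((\na_Xh)(\al,\be))\circ p$, while $\overline{\na}_{X^v}\Pi=0$ and $\overline{\na}_{X^h}\Pi$ vanishes on like-type pairs. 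Substituting \eqref{eqpi} into Proposition \ref{folk}, the triples of type $(h,h,h)$, $(h,h,v)$ and $(v,v,v)$ give $0$ identically, and only $(\al^h,\be^v,\ga^v)$ survives, yielding $2\big[(\na_{h_\#(\be)}h)(\al,\ga)-(\na_{h_\#(\ga)}h)(\al,\be)\big]\circ p$. The concluding observation is that, since $\na h$ is symmetric in its two form-arguments, this expression vanishes for all $\al,\be,\ga$ if and only if $(\al,\be,\ga)\mapsto(\na_{h_\#(\al)}h)(\be,\ga)$ is totally symmetric, which is exactly \eqref{codazzib}; this proves both implications simultaneously.

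For the statement about leaves I would compute the characteristic distribution of $\Pi$. By \eqref{eqpi}, $\mathrm{Im}\,\Pi_\#$ at $(x,u)$ equals $(\mathrm{Im}\,h_\#(x))^h\oplus(\mathrm{Im}\,h_\#(x))^v$. Writing $L$ for the leaf through $x$, so that $T_xL=\mathrm{Im}\,h_\#(x)$ and $u\in T_xL$, I would then match this with $T_{(x,u)}(TL)$: its vertical part is visibly $(T_xL)^v$, and differentiating a curve $t\mapsto(\ga(t),u(t))$ in $TL$ shows that the connection map $K$ returns $\na_{\ga'(0)}u$, so that $T_{(x,u)}(TL)=(T_xL)^h\oplus(T_xL)^v$ precisely when $L$ is $\na$-autoparallel. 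This autoparallelism holds here: by the identity $h_\#(\D_\al\be)=\na_{h_\#(\al)}h_\#(\be)$ of Theorem \ref{theo1} (equivalently Proposition \ref{orbit}), $\na_XY\in\mathrm{Im}\,h_\#$ whenever $X\in\mathrm{Im}\,h_\#$ and $Y$ is a section of $TL$. Hence $TL$ is an integral manifold of $\mathrm{Im}\,\Pi_\#$, i.e. a symplectic leaf; and since $J(T_xL)^h=(T_xL)^v$ and $J(T_xL)^v=(T_xL)^h$, this subspace is $J$-stable, so $TL$ is a complex submanifold.

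For the pseudo-K\"ahler assertion I would evaluate the leaf symplectic form $\om_L$ on lifts. Combining the standard relation $\om_L(\Pi_\#\xi,\Pi_\#\eta)=\Pi(\xi,\eta)$ with \eqref{eqpi}, one finds that $\om_L$ vanishes on pairs of horizontal lifts and on pairs of vertical lifts, and that $\om_L(h_\#(\al)^h,h_\#(\be)^v)=h(\al,\be)\circ p$ up to a sign fixed by conventions. From these formulas a direct check gives $\om_L(JU,JV)=\om_L(U,V)$, so $J$ is $\om_L$-compatible and $g_L(U,V)=\om_L(JU,V)$ is a symmetric nondegenerate $J$-Hermitian pseudo-metric (block-diagonal in the horizontal/vertical splitting, each block being the metric $g_L$ of Proposition \ref{orbit}). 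Since $J$ is integrable ($\overline{\na}$-parallel for the torsionless $\overline{\na}$, so its Nijenhuis tensor vanishes) and $\om_L$ is closed and $J$-invariant, the standard characterization of the K\"ahler class among almost-Hermitian structures — an integrable complex structure together with a closed compatible fundamental form — shows $(TL,g_L,\om_L,J)$ is pseudo-K\"ahler.

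I expect the second assertion to be the main obstacle, specifically the identification $T_{(x,u)}(TL)=\mathrm{Im}\,\Pi_\#$, since it is here that one must use that the leaves of the affine foliation are $\na$-autoparallel — a property strictly weaker than, and not to be confused with, the generally false parallelism discussed before the $\R^4$ example. The remaining work is organized but sign-sensitive bookkeeping of horizontal and vertical lifts.
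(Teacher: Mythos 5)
Your proposal is correct and follows essentially the same route as the paper: the equivalence is obtained exactly as in the paper's proof, by feeding the $\overline{\na}$-derivatives of $\Pi$ on horizontal/vertical lifts into Proposition \ref{folk} and observing that only the $(\al^h,\be^v,\ga^v)$ triples survive, reproducing the contravariant Codazzi equation \eqref{codazzib}. Your second and third paragraphs merely flesh out, correctly, the part of the statement the paper declares ``obvious'' (using the autoparallelism of the leaves via $h_\#(\D_\al\be)=\na_{h_\#(\al)}h_\#(\be)$, the $J$-stability of $T(TL)$, and the standard integrable-plus-closed characterization of K\"ahler structures), so there is no substantive divergence from the paper's argument.
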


    \begin{proof} We will use Proposition \ref{folk} to prove the equivalence. Indeed, by a direct computation one can establish easily, for any $\al,\be,\ga\in\Om^1(M)$, the following relations
    	\begin{eqnarray*}
    	\overline{\na}_{\Pi_\#(\al^v)}\Pi(\be^v,\ga^v)&=&\overline{\na}_{\Pi_\#(\al^v)}\Pi(\be^h,\ga^h)=\overline{\na}_{\Pi_\#(\al^h)}\Pi(\be^v,\ga^v)=\overline{\na}_{\Pi_\#(\al^h)}\Pi(\be^h,\ga^h)=
    	\overline{\na}_{\Pi_\#(\al^h)}\Pi(\be^h,\ga^v)=0,\\
    	\overline{\na}_{\Pi_\#(\al^v)}\Pi(\be^h,\ga^v)
    	&=&\na_{h_\#(\al)}(h)(\be,\ga)\circ p,
    	\end{eqnarray*}and the equivalence follows. The second part of the theorem is obvious and the only point which need to be checked is that $g_L$ is  nondegenerate.
    	\end{proof}
    
  \begin{remark} \label{rem1}\begin{enumerate}\item The total space of the dual of a Lie algebroid carries a Poisson tensor (see \cite{mac}).
  		 If $(M,\na,h)$ is a contravariant pseudo-Hessian manifold then, according to Theorem \ref{theo1}, $T^*M$ carries a Lie algebroid structure and one can see easily that $\Pi$ is the corresponding Poisson tensor on $TM$.
  		\item The equivalence of $(i)$ and $(ii)$ in Theorem \ref{poisson} deserves to be stated explicitly in the case of $\R^n$ endowed with its canonical affine structure $\na$. Indeed, let $(h_{ij})_{1\leq i,j\leq n}$ be a symmetric matrix where $h_{ij}\in C^\infty(\R^n,\R)$ and $h$ the associated symmetric bivector field on $\R^n$. The associated bivector field $\Pi_h$ on $T\R^n=\C^n$ is    		\[ \Pi_h=\sum_{i,j=1}^nh_{ij}(x)\partial_{x_i}\wedge\partial_{y_j}, \] where $(x_1+iy_1,\ldots,x_n+iy_n)$ are the canonical coordinates of $\C^n$. Then, according to Theorem \ref{theo1}, $(\R^n,\na,h)$ is a contravariant pseudo-Hessian manifold if and only if $(\C^n,\Pi_h)$ is a Poisson manifold.
  	\end{enumerate}

  \end{remark}
  
  We explore now some relations between some invariants of $(M,\na,h)$ and some invariants of $(TM,\Pi)$. 
  
  \begin{proposition} Let $(M,\na,h)$ be a contravariant pseudo-Hessian manifold. Then $(\mathrm{div}_{\na}h)^v=\mathrm{div}_{\overline{\na}}\Pi.$
  	
  \end{proposition}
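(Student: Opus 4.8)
The plan is to reduce everything to a coordinate computation in an adapted chart. Fix an affine coordinate system $(x_1,\ldots,x_n)$ on $M$, so that $\na_{\partial_{x_i}}\partial_{x_j}=0$, and let $(x_1,\ldots,x_n,\mu_1,\ldots,\mu_n)$ be the induced coordinates on $TM$. First I would record that, since all $\Ga_{ij}^l$ vanish, the connection map $K$ gives $\ker K=\mathrm{span}(\partial_{x_i})$ and $\ker Tp=\mathrm{span}(\partial_{\mu_i})$, whence $\partial_{x_i}^h=\partial_{x_i}$ and $\partial_{x_i}^v=\partial_{\mu_i}$. By \eqref{con} together with the parallelism of the $\partial_{x_i}$, these $2n$ vector fields are all $\overline{\na}$-parallel; in other words $(x_i,\mu_i)$ is an affine coordinate system for $\overline{\na}$, and the dual coframe $\{dx_i,d\mu_i\}=\{(dx_i)^h,(dx_i)^v\}$ is parallel as well.

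Next I would write $\Pi$ in these coordinates. From the defining relations, or directly from \eqref{eqpi}, the only non-vanishing values are $\Pi((dx_i)^h,(dx_j)^v)=h_{ij}\circ p$, so $\Pi=\sum_{i,j}(h_{ij}\circ p)\,\partial_{x_i}\wedge\partial_{\mu_j}$, where each coefficient $h_{ij}\circ p$ depends on $x$ only. Because the coordinate frame and coframe are $\overline{\na}$-parallel, the covariant divergence collapses to the ordinary one: for any coordinate $z$ among the $x$'s and $\mu$'s, the $z$-component of $\mathrm{div}_{\overline{\na}}\Pi$ equals $\sum_{w}\partial_{w}\Pi^{wz}$, the sum running over all $2n$ coordinates and $\Pi^{wz}=\Pi(dz_w,dz_z)$.

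Then I would evaluate the two families of components. For a horizontal component $z=x_k$, the only nonzero entries are $\Pi^{\mu_j x_k}=-h_{kj}\circ p$, differentiated by $\partial_{\mu_j}$; since $h_{kj}\circ p$ is independent of $\mu$, this component vanishes. For a vertical component $z=\mu_k$, the contributions come from $\Pi^{x_i\mu_k}=h_{ik}\circ p$ differentiated by $\partial_{x_i}$, giving $\sum_i\partial_{x_i}(h_{ik})\circ p$. Comparing with the formula $\mathrm{div}_\na(h)=\sum_{i,j}\partial_{x_i}(h_{ji})\partial_{x_j}$ recalled in the previous subsection, this is exactly the $k$-th component of $\mathrm{div}_\na h$ pulled back to $TM$. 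Hence $\mathrm{div}_{\overline{\na}}\Pi=\sum_k\big(\sum_i\partial_{x_i}(h_{ik})\circ p\big)\partial_{\mu_k}=(\mathrm{div}_\na h)^v$, as claimed.

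I do not expect a genuine obstacle here: the statement is essentially a bookkeeping identity once the adapted parallel frame is in place. The only points requiring care are the sign and index conventions in the divergence formula and the verification that $\overline{\na}$ makes $(x_i,\mu_i)$ affine, so that the covariant divergence really reduces to the coordinate divergence. An equivalent and perhaps cleaner route avoids coordinates by choosing a local $\na$-parallel frame $(e_i)$ of $TM$, using the parallel frame $\{e_i^h,e_i^v\}$ of $TTM$ in the defining sum for $\mathrm{div}_{\overline{\na}}$, and exploiting $e_i^h(f\circ p)=(e_if)\circ p$ together with $e_i^v(f\circ p)=0$; the same two cases then give the horizontal part $0$ and the vertical part $\big(\mathrm{div}_\na h\big)\circ p$.
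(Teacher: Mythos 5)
Your proof is correct and takes essentially the same route as the paper: the paper evaluates $\prec\al^v,\mathrm{div}_{\overline{\na}}\Pi\succ$ and $\prec\al^h,\mathrm{div}_{\overline{\na}}\Pi\succ$ directly in the lifted basis $(e_i^v,e_i^h)$ using \eqref{con}, which is precisely the parallel-frame variant you sketch in your final paragraph. Your main computation is the same calculation carried out in the adapted affine chart $(x_i,\mu_i)$ on $TM$, with the horizontal components vanishing and the vertical ones giving $\sum_i\partial_{x_i}(h_{ik})\circ p$, i.e. $(\mathrm{div}_{\na}h)^v$.
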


  \begin{proof} Fix $(x,u)\in TM$ and choose a basis $(e_1,\ldots,e_n)$ of $T_xM$. Then $(e_1^v,\ldots,e_n^v,e_1^h,\ldots,e_n^h)$ is a basis of $T_{(x,u)}TM$ with
  	$((e_1^*)^v,\ldots,(e_n^*)^v,(e_1^*)^h,\ldots,(e_n^*)^h)$ as a dual basis. For any $\al\in T_x^*M$, we have
  	\begin{eqnarray*}
  	\prec\al^v,\mathrm{div}_{\overline{\na}}\Pi\succ&=&\sum_{i=1}^n\left( \overline{\na}_{e_i^v}(\Pi)((e_i^*)^v,\al^v)+
  	\overline{\na}_{e_i^h}(\Pi)((e_i^*)^h,\al^v) \right)\\
  	&\stackrel{\eqref{con}}=&\prec\al,\mathrm{div}_\na(h)\succ\circ p=\prec\al^v,(\mathrm{div}_\na(h))^v\succ.
  	\end{eqnarray*}In the same way we get that $\prec\al^h,\mathrm{div}_{\overline{\na}}\Pi\succ=0$ and the result follows.
  	\end{proof}

  Let $(M,\na,h)$ be a contravariant pseudo-Hessian manifold. For any multivector field $Q$ on $M$ we define its vertical lift $Q^v$ on $TM$ by
  \[ i_{\al^h}Q^v=0\esp Q^v(\al_1^v,\ldots,\al_q^v)=Q(\al_1,\ldots,\al_q)\circ p. \]
  Recall that $h$ defines a Lie algebroid structure on $T^*M$ whose anchor is $h_\#$ and the Lie bracket is given by \eqref{bracket}. The Poisson tensor $\Pi$ defines a Lie algebroid structure on $T^*TM$ whose anchor is $\Pi_\#$ and the Lie bracket is the Koszul bracket
  \[ [\phi_1,\phi_2]_\Pi=\Li_{\Pi_\#(\phi_1)}\phi_2-\Li_{\Pi_\#(\phi_2)}\phi_1-d\Pi(\phi_1,\phi_2),\quad\phi_1,\phi_2\in\Om^1(TM). \]
  We denote by $\mathbf{d}_h$ (resp. $\mathbf{d}_\Pi$) the differential  associated to the Lie algebroid structure on $T^*M$ (resp. $T^*TM$) defined by $h$ (resp. $\Pi$).  
  
  \begin{proposition}\label{coho}\begin{enumerate}
  		\item[$(i)$] For any $\al,\be\in\Om^1(M)$ and $X\in\Ga(TM)$,
  		\[ \begin{cases}
  		\Li_{X^h}\al^h=(\Li_X\al)^h,\;\Li_{X^h}\al^v=(\na_X\al)^v,\;\Li_{X^v}\al^h=0\esp 
  		\Li_{X^v}\al^v=(\Li_X\al)^h-(\na_X\al)^h,\\
  		[\al^h,\be^h]_\Pi=0,\;[\al^v,\be^v]_\Pi=-[\al,\be]_h^v\esp [\al^h,\be^v]_\Pi=(\D_\be\al)^h,
  		\end{cases} \]where $\D$ is the connection given by \eqref{eqD}.
  		\item[$(ii)$] $(\mathbf{d}_hQ)^v=-\mathbf{d}_\Pi(Q^v)$.
  	\end{enumerate}
  	 \end{proposition}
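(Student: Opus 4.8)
The plan is to dispatch the two parts separately, proving all six identities in $(i)$ first and then deducing $(ii)$ from them by a derivation argument, so that no degree-by-degree computation is needed for the Lie algebroid differentials.

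For the Lie-derivative identities in $(i)$ I would compute everything from the defining formula $(\Li_Z\eta)(W)=Z.\eta(W)-\eta([Z,W])$, together with the bracket relations \eqref{br} and the two elementary facts that a horizontal lift projects onto its base field, so $X^h.(f\circ p)=(X.f)\circ p$, while a vertical lift kills pullbacks, $X^v.(f\circ p)=0$. Testing each candidate against an arbitrary $Y^h$ and $Y^v$, and using that $\al^h$ (resp. $\al^v$) annihilates vertical (resp. horizontal) lifts, reduces each identity to a one-line check. The only one with a nontrivial right-hand side is $\Li_{X^v}\al^v=(\Li_X\al)^h-(\na_X\al)^h$: evaluating the left side on $Y^h$ gives $\al(\na_YX)\circ p$, while evaluating the right side on $Y^h$ gives $\al(\na_XY-[X,Y])\circ p$, and these agree precisely because $\na$ is torsionless, i.e. $[X,Y]=\na_XY-\na_YX$. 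That is the one place where torsion-freeness is used.

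For the three Koszul-bracket identities I would substitute the anchor formulas \eqref{eqpi} into the definition of $[\;,\;]_\Pi$ and feed in the Lie-derivative identities just obtained. The cases $[\al^h,\be^h]_\Pi$ and $[\al^v,\be^v]_\Pi$ collapse at once: the first from $\Li_{X^v}\al^h=0$ and $\Pi(\al^h,\be^h)=0$, the second from $\Li_{X^h}\al^v=(\na_X\al)^v$ and the definition \eqref{bracket} of $[\;,\;]_h$. The identity $[\al^h,\be^v]_\Pi=(\D_\be\al)^h$ is the crux. Using $\Li_{h_\#(\al)^v}\be^v=(\Li_{h_\#(\al)}\be)^h-(\na_{h_\#(\al)}\be)^h$ and $\Li_{h_\#(\be)^h}\al^h=(\Li_{h_\#(\be)}\al)^h$ from $(i)$, and the observation that $d(h(\al,\be)\circ p)=(d(h(\al,\be)))^h$, the claim reduces to the base-manifold identity
\[ \Li_{h_\#(\al)}\be-\na_{h_\#(\al)}\be+\Li_{h_\#(\be)}\al-d(h(\al,\be))=\D_\be\al. \]
I would prove this by evaluating both sides on an arbitrary $X$, rewriting $\Li_A\be=\na_A\be+\be(\na_{\cdot}A)$ (valid since $\na$ is torsion-free), expanding $X.h(\al,\be)$ by the Leibniz rule, and using the compatibility $\na_X(h_\#\al)=(\na_Xh)_\#\al+h_\#(\na_X\al)$. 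The first-order terms in $h$ cancel in pairs, and what survives is $(\na_Xh)(\al,\be)+(\na_{h_\#(\be)}\al)(X)$, which is exactly $\D_\be\al(X)$ by \eqref{eqD} after using the symmetry of $h$. I expect this cancellation to be the main obstacle; the rest is bookkeeping.

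For $(ii)$ the idea is to avoid a computation in arbitrary degree. Both $\mathbf{d}_h$ and $\mathbf{d}_\Pi$ are degree $+1$ derivations of the exterior algebra of multivector fields, and the vertical lift $Q\mapsto Q^v$ is a morphism of exterior algebras, $(Q_1\we Q_2)^v=Q_1^v\we Q_2^v$ (checked on decomposables from the defining relations $i_{\al^h}Q^v=0$ and $Q^v(\al_1^v,\ldots,\al_q^v)=Q(\al_1,\ldots,\al_q)\circ p$). Consequently both $Q\mapsto(\mathbf{d}_hQ)^v$ and $Q\mapsto-\mathbf{d}_\Pi(Q^v)$ are derivations over this morphism, so it suffices to verify the identity on generators, namely functions and vector fields. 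For a function $f$ one has $f^v=f\circ p$, $\mathbf{d}_hf=h_\#(df)$, and $\mathbf{d}_\Pi(f\circ p)=-\Pi_\#(d(f\circ p))=-\Pi_\#((df)^h)=-h_\#(df)^v$ by \eqref{eqpi}; the minus sign arises because $\Pi$ is skew while $h$ is symmetric in the anchor-differential formula, and this is precisely the origin of the global sign in the statement. For a vector field $X$ one evaluates both sides on pairs drawn from the local coframe $\{\al^h,\al^v\}$ of $T^*TM$, using the Koszul brackets from $(i)$; the pairings on $(\al^h,\be^h)$ and $(\al^h,\be^v)$ vanish on both sides (the right-hand side because $i_{\al^h}Q^v=0$), and on $(\al^v,\be^v)$ the two sides agree after inserting the explicit Lie-algebroid formula for $\mathbf{d}_hX$. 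This completes the reduction and hence the proof.
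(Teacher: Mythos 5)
Your proposal is correct, and for part $(i)$ it carries out precisely the ``straightforward computation'' the paper leaves implicit. Your key step there checks out: expanding $\Li_{h_\#(\al)}\be-\na_{h_\#(\al)}\be+\Li_{h_\#(\be)}\al-dh(\al,\be)$ on a test field $X$, the terms $h(\na_X\al,\be)$ and $h(\al,\na_X\be)$ cancel as you predict and what survives is $(\na_Xh)(\be,\al)+\prec\na_{h_\#(\be)}\al,X\succ=\prec\D_\be\al,X\succ$, with torsion-freeness entering exactly where you flag it. For part $(ii)$, however, you take a genuinely different route from the paper. The paper evaluates $\mathbf{d}_\Pi(Q^v)$ directly in arbitrary degree on the lifted coframe: first $i_{\al^h}\mathbf{d}_\Pi(Q^v)=0$, because $\Pi_\#(\al^h)=(h_\#(\al))^v$ annihilates functions pulled back from $M$ and the mixed bracket $(\D_\be\al)^h$ is annihilated by $Q^v$; then on all-vertical arguments every term of the Cartan-type formula acquires exactly one minus sign, coming either from $\Pi_\#(\al^v)=-(h_\#(\al))^h$ in the anchor terms or from $[\al^v,\be^v]_\Pi=-[\al,\be]_h^v$ in the bracket terms, which gives $-(\mathbf{d}_hQ)^v$ in one stroke. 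Your derivation-over-the-morphism argument replaces this arbitrary-degree evaluation by the wedge-morphism property $(Q_1\we Q_2)^v=Q_1^v\we Q_2^v$ (which does hold: lifted $1$-forms span $T^*TM$ pointwise, so $Q^v$ is characterized by its defining relations, and any insertion of an $\al^h$ into a shuffle term kills it) together with checks on the generators; those checks are right, including the sign $\mathbf{d}_\Pi F=-\Pi_\#(dF)$ forced by the skew convention $\psi(\Pi_\#(\phi))=\Pi(\phi,\psi)$, and your degree-one verification on $(\al^v,\be^v)$ uses the same two relations the paper uses. What the paper's route buys is brevity and a transparent ``one sign per term'' mechanism uniform in all degrees; what yours buys is that no degree-$d$ bookkeeping ever appears, the global sign in $(ii)$ being traced once and for all to the degree-zero computation. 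The one convention you should state explicitly is $f^v=f\circ p$ for functions, which your generator check silently uses and which is the natural degree-zero extension of the paper's definition of the vertical lift.
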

  	 \begin{proof} The relations in $(i)$ can be established by a straightforward computation. From these relations and the fact that $\Pi_\#(\al^h)=(h_\#(\al))^v$ one can deduce easily that $i_{\al^h}\mathbf{d}_\Pi(Q^v)=0$. On the other hand, since $\Pi_\#(\al^v)=-(h_\#(\al))^h$ and $[\al^v,\be^v]_\Pi=-[\al,\be]^v$ we can conclude.
  	 	\end{proof}
  	 	
  	 	\begin{remark} From Propositions \ref{co} and Proposition \ref{coho}, we can deduce that $\mathbf{d}_\Pi(\mathrm{div}_{\overline{\na}}\Pi)=0$. This is not a surprising result because $\overline{\na}$ is flat and $\mathrm{div}_{\overline{\na}}\Pi$ is a representative of the modular class of $\Pi$.
  	 		
  	 	\end{remark}

  As a consequence of Proposition \ref{coho} we can define a linear map from the cohomology of $(T^*M,h_\#,[\;,\;]_h)$ to the cohomology of $(T^*TM,\Pi_\#,[\;,\;]_\Pi)$ by
  \[ V:H^*(M,h)\too H^*(TM,\Pi),\; [Q]\mapsto[Q^v]. \]
  
  \begin{proposition} $V$ is injective.
  	
  \end{proposition}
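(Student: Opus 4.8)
The plan is to construct an explicit left inverse of the vertical lift at the level of sections and combine it with the chain-map relation of Proposition \ref{coho}$(ii)$. Let $s_0:M\too TM$, $s_0(x)=(x,0)$, be the zero section, so that $p\circ s_0=\mathrm{id}_M$. For a multivector field $P\in\Ga(\wedge^q TTM)$ I define $WP\in\Ga(\wedge^q TM)$ by
\[ (WP)(\al_1,\ldots,\al_q)=\left[P(\al_1^v,\ldots,\al_q^v)\right]\circ s_0,\quad \al_1,\ldots,\al_q\in\Om^1(M). \]
Since $(f\al)^v=(f\circ p)\al^v$, this is $C^\infty(M)$-multilinear and antisymmetric, so $WP$ is a genuine $q$-vector field. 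Moreover, because $Q^v(\al_1^v,\ldots,\al_q^v)=Q(\al_1,\ldots,\al_q)\circ p$ and $p\circ s_0=\mathrm{id}_M$, one gets immediately $W(Q^v)=Q$; thus $W$ is a left inverse of the vertical lift.

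The key step is to prove the relation
\[ W\circ\mathbf{d}_\Pi=-\mathbf{d}_h\circ W. \]
Granting this, injectivity follows in one line: if $[Q]\in H^*(M,h)$ satisfies $V([Q])=[Q^v]=0$, write $Q^v=\mathbf{d}_\Pi P$; then $Q=W(Q^v)=W(\mathbf{d}_\Pi P)=-\mathbf{d}_h(WP)$, so $[Q]=0$.

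To establish the displayed relation I would evaluate both sides on vertical lifts $\al_1^v,\ldots,\al_{q+1}^v$ and restrict along $s_0$. By Proposition \ref{coho}$(i)$ one has $\Pi_\#(\al^v)=-(h_\#(\al))^h$ and $[\al^v,\be^v]_\Pi=-[\al,\be]_h^v$, so in the Lie-algebroid formula for $\mathbf{d}_\Pi P(\al_1^v,\ldots,\al_{q+1}^v)$ the only derivations that occur are along the horizontal fields $(h_\#(\al_j))^h$, and the bracket terms feed the vertical lifts $[\al_i,\al_j]_h^v$ back into $P$. The crucial geometric fact is that along the zero section the horizontal subspace is tangent to $s_0(M)$: from the coordinate expression of $K$ one sees that $\ker K$ is cut out by $Z_l=-\sum_{i,j}b_i\mu_j\Ga_{ij}^l$, which at $\mu=0$ is exactly $ds_0(T_xM)$. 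Hence $X^h(x,0)=ds_0(X(x))$ and therefore $(X^h\cdot F)\circ s_0=X\cdot(F\circ s_0)$ for every $F\in C^\infty(TM,\R)$. Inserting this into the anchor sum, and using $[P(\ldots,[\al_i,\al_j]_h^v,\ldots)]\circ s_0=(WP)(\ldots,[\al_i,\al_j]_h,\ldots)$ for the bracket sum, the restricted expression collapses to exactly $-\mathbf{d}_h(WP)(\al_1,\ldots,\al_{q+1})$, which is the claim.

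The main obstacle is precisely this verification that restriction to the zero section commutes with the differential up to sign: a priori $\mathbf{d}_\Pi$ differentiates in all directions of $TM$, but because every argument is a vertical lift and $\Pi_\#$ sends vertical lifts to horizontal fields, only horizontal derivatives survive, and these are tangential to $s_0(M)$. It is convenient to run the computation in affine coordinates with $\al_i=dx_i$, since then $[\al_i,\al_j]_h=0$ (as in the proof of Theorem \ref{theo1}), the bracket terms drop out, and the identity reduces to the single observation $(X^h\cdot F)\circ s_0=X\cdot(F\circ s_0)$; tensoriality of both sides then yields the relation for arbitrary covectors, completing the proof that $V$ is injective.
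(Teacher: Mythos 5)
Your proposal is correct, and it proves the proposition by a genuinely different mechanism than the paper's. The paper works on the primitive itself: decomposing $\Ga(\wedge^\bullet TTM)$ by vertical/horizontal type, it reduces to $P\in\Ga_{(d-1,0)}(\wedge^{d-1}TTM)$, computes $0=\mathbf{d}_\Pi P(\be^h,\al_1^v,\ldots,\al_{d-1}^v)=(h_\#(\be))^v.P(\al_1^v,\ldots,\al_{d-1}^v)$, concludes that $P(\al_1^v,\ldots,\al_{d-1}^v)$ is constant on the fibers, hence $P=T^v$, and so $Q=-\mathbf{d}_hT$ is exact. You instead build an explicit cochain-level retraction $W$ (evaluate on vertical lifts, then restrict along the zero section $s_0$), check $W(Q^v)=Q$, and prove the anticommutation $W\circ\mathbf{d}_\Pi=-\mathbf{d}_h\circ W$; your geometric input --- that along the zero section the horizontal lift satisfies $X^h(x,0)=ds_0(X(x))$, whence $(X^h\cdot F)\circ s_0=X\cdot(F\circ s_0)$ --- is correctly read off from the coordinate expression of the connection map $K$ at $\mu=0$, and the algebraic bookkeeping uses exactly \eqref{eqpi} and Proposition \ref{coho}$(i)$, with signs consistent with Proposition \ref{coho}$(ii)$ since $W((\mathbf{d}_hQ)^v)=\mathbf{d}_hQ=-W(\mathbf{d}_\Pi(Q^v))$. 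Your route buys several things: injectivity becomes the one-liner $Q=W(\mathbf{d}_\Pi P)=-\mathbf{d}_h(WP)$ for an \emph{arbitrary} primitive $P$; moreover $W$ descends to cohomology, so you actually show $V$ is a split injection; and you sidestep two delicate points in the paper's argument --- the claim that $P$ may be assumed of type $(d-1,0)$ (which needs a short grading argument the paper does not spell out), and the fiberwise-constancy step, which as written only uses vertical derivatives along $(h_\#(\be))^v$ and therefore literally yields constancy only in the $\mathrm{Im}h_\#$-directions of each fiber when $h$ is degenerate. What the paper's route gives in exchange is the sharper structural statement that a primitive of a vertical lift can be taken to be itself a vertical lift, which your retraction does not directly provide.
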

  \begin{proof} An element $P\in \Ga(\wedge^d TTM)$ is of type $(r,d-r)$ if for any $q\not=r$  
  	\[ P(\al_1^v,\ldots,\al_{q}^v,\be_1^h,\ldots,\be_{d-q}^h)=0, \]for any $\al_1,\ldots,\al_q,\be_1,\ldots,\be_{d-q}\in\Om^1(M)$. We have
  	\[ \begin{cases}\Ga(\wedge^d TTM)=\bigoplus_{r=0}^d\Ga_{(r,d-r)}(\wedge^d TTM),\\  
  	\mathbf{d}_\Pi(\Ga_{(r,d-r)}(\wedge^{d} TTM))\subset \Ga_{(r+1,d-r)}(\wedge^{d+1} TTM)\oplus
  	\Ga_{(r,d+1-r)}(\wedge^{d+1} TTM).\end{cases} \]

  	Let $Q\in\Ga(\wedge^d TM)$ such that $\mathbf{d}_hQ=0$ and there exists $P\in \Ga(\wedge^{d-1} TTM)$ such that $\mathbf{d}_\Pi P=Q^v$. Since $Q^v\in \Ga_{(d,0)}(\wedge^d TTM) $ then $P\in \Ga_{(d-1,0)}(\wedge^{d-1} TTM)$. Let us show that $P=T^v$. For $\al_1,\ldots,\al_{d-1},\be\in\Om^1(M)$, we have
  	\begin{eqnarray*}
  	0&=&\mathbf{d}_\Pi P(\be^h,\al_1^v,\ldots,\al_{d-1}^v)
  	=(h_\#(\be))^v.P(\al_1^v,\ldots,\al_{d-1}^v).
  	\end{eqnarray*}So the function $P(\al_1^v,\ldots,\al_{d-1}^v)$ is constant on the fibers of $TM$ and hence there exists $T\in\Ga(\wedge^{d-1}TM)$ such that $P(\al_1^v,\ldots,\al_{d-1}^v)=T(\al_1,\ldots,\al_{d-1})\circ p$. So $[Q]=0$ which completes the proof.
  	\end{proof}

  \section{Linear,  affine and multiplicative  contravariant pseudo-Hessian structures}\label{section4}
  
  \subsection{Linear and  affine   contravariant pseudo-Hessian structures}
  
  As in the Poisson geometry context, we have the notions of linear and  affine   contravariant pseudo-Hessian structures. One can see \cite{kassel} for the notion of cocycle in associative algebras.
  
   Let $(V,\na)$ be a finite dimensional real vector space endowed with its canonical affine structure. A symmetric bivector field $h$ on $V$ is called affine if  there exists a commutative product $\bullet$ on $V^*$ and a symmetric bilinear form $B$ on $V^*$ such that, for any $\al,\be\in V^*\subset\Om^1(V)$ and $u\in V$,
  	\[ h(\al,\be)(u)=\prec \al\bullet\be,u\succ+B(\al,\be). \]
  	One can see easily that if $\al,\be\in\Om^1(V)=C^\infty(V,V^*)$ then
  	\[ h(\al,\be)(u)=\prec \al(u)\bullet\be(u),u\succ+B(\al(u),\be(u)). \]
  	If $B=0$, $h$ is called linear.
  	
  	If $(x_1,\ldots,x_n)$ is a linear coordinates system on $V^*$ associated to a basis $(e_1,\ldots,e_n)$ then
  	\[ h(dx_i,dx_j)=b_{ij}+\sum_{k=1}^nC_{ij}^kx_k, \]where $e_i\bullet e_j=\sum_{k=1}^nC_{ij}^ke_k$ and $b_{ij}=B(e_i,e_j)$.
  
  \begin{proposition}
  	  $(V,\na,h)$ is a contravariant pseudo-Hessian manifold if and only if
  	$\bullet$ is associative and $B$ is a scalar 2-cocycle of $(V^*,\bullet)$, i.e.,
  	\[ B(\al\bullet \be,\ga)=B(\al,\be\bullet \ga) \]for any $\al,\be,\ga\in V^*$.
  	
  \end{proposition}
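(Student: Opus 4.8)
The plan is to reduce everything to the local criterion of Proposition \ref{loca}. Since $V$ carries its canonical affine structure, the linear coordinates $(x_1,\ldots,x_n)$ are affine, so $(V,\na,h)$ is contravariant pseudo-Hessian if and only if the components $h_{ij}=b_{ij}+\sum_m C_{ij}^m x_m$ satisfy \eqref{eq21} for all $i<j$ and all $k$; and because \eqref{eq21} is antisymmetric in $i,j$, this is equivalent to demanding it for every triple $(i,j,k)$.

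First I would substitute the affine form of $h_{ij}$ into \eqref{eq21}. Since $\partial_{x_l}(h_{jk})=C_{jk}^l$, the left-hand side of \eqref{eq21} becomes
\[ \sum_l\Big(b_{il}+\sum_m C_{il}^m x_m\Big)C_{jk}^l-\sum_l\Big(b_{jl}+\sum_m C_{jl}^m x_m\Big)C_{ik}^l, \]
an affine (degree $\le1$) polynomial in $x$. Hence \eqref{eq21} holds identically if and only if its constant term and each coefficient of $x_m$ vanish separately, yielding the two families of scalar identities
\[ \sum_l\big(b_{il}C_{jk}^l-b_{jl}C_{ik}^l\big)=0 \esp \sum_l\big(C_{il}^m C_{jk}^l-C_{jl}^m C_{ik}^l\big)=0 \]
for all indices.

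The remaining, and only delicate, step is to recognize these index identities intrinsically. For the constant-term identity, I would write $\sum_l b_{il}C_{jk}^l=B(e_i,e_j\bullet e_k)$ and $\sum_l b_{jl}C_{ik}^l=B(e_j,e_i\bullet e_k)$, so the condition reads $B(e_i,e_j\bullet e_k)=B(e_j,e_i\bullet e_k)$. Using the symmetry of $B$ and the commutativity of $\bullet$, one checks this is equivalent to the cocycle identity $B(\al\bullet\be,\ga)=B(\al,\be\bullet\ga)$: the cocycle identity gives $B(e_j,e_i\bullet e_k)=B(e_i\bullet e_k,e_j)=B(e_i,e_k\bullet e_j)=B(e_i,e_j\bullet e_k)$, while conversely the constant-term identity combined with symmetry of $B$ recovers the cocycle relation. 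For the linear-term identity, I would read $\sum_l C_{il}^m C_{jk}^l$ as the $e_m$-component of $(e_j\bullet e_k)\bullet e_i$ (using $C_{il}^m=C_{li}^m$) and $\sum_l C_{jl}^m C_{ik}^l$ as that of $(e_i\bullet e_k)\bullet e_j$, so the condition becomes $(e_j\bullet e_k)\bullet e_i=(e_i\bullet e_k)\bullet e_j$ for all $i,j,k$.

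Writing $T(a,b,c)=(a\bullet b)\bullet c$, commutativity already gives $T(a,b,c)=T(b,a,c)$, and the linear-term identity is exactly $T(a,b,c)=T(c,b,a)$. Since the transpositions $(1\,2)$ and $(1\,3)$ generate $S_3$, these two partial symmetries force $T$ to be fully symmetric; full symmetry is in turn equivalent to associativity, because $a\bullet(b\bullet c)=T(b,c,a)=T(a,b,c)=(a\bullet b)\bullet c$. This is the point I expect to demand the most care, as it is where commutativity is genuinely used to upgrade a partial symmetry to associativity. Assembling the two equivalences gives the claim.
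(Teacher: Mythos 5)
Your proof is correct, but it takes a genuinely different route from the paper's. The paper argues intrinsically and flow-theoretically: since $h_\#(\al)(u)=\mathrm{L}_\al^*u+i_\al B$ is an affine vector field with explicit flow, one computes directly
\[ \na_{h_\#(\al)}(h)(\be,\ga)(u)=\prec\al\bullet(\be\bullet\ga),u\succ+B(\al,\be\bullet\ga), \]
so the contravariant Codazzi equation \eqref{codazzib} is precisely the symmetry of this expression in $\al$ and $\be$; separating the part linear in $u$ from the constant part yields $\al\bullet(\be\bullet\ga)=\be\bullet(\al\bullet\ga)$ and $B(\al,\be\bullet\ga)=B(\be,\al\bullet\ga)$, which, given commutativity, are equivalent to associativity and the cocycle identity. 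You instead invoke the coordinate criterion of Proposition \ref{loca}, substitute the affine components $h_{ij}=b_{ij}+\sum_m C_{ij}^m x_m$ into \eqref{eq21}, and separate the constant and linear coefficients of the resulting degree-one polynomial --- a more elementary but perfectly valid reduction that arrives at the same two families of identities (your linear-term identity $(e_j\bullet e_k)\bullet e_i=(e_i\bullet e_k)\bullet e_j$ is the coordinate form of the displayed symmetry, and your constant-term identity is the coordinate form of the $B$-condition). What the paper's computation buys is a basis-free formula for $\na_{h_\#(\al)}h$ of independent use (the same style of computation recurs in the right-invariant setting of Section \ref{section6}); what your version buys is that it makes fully explicit the final algebraic step, which the paper compresses into ``the result follows'': your observation that $T(a,b,c)=(a\bullet b)\bullet c$ is invariant under the transposition $(1\,2)$ by commutativity and under $(1\,3)$ by the Hessian condition, and that these transpositions generate $S_3$, so $T$ is totally symmetric and hence $\bullet$ associative, is exactly the right justification of the step where commutativity upgrades a partial symmetry to associativity, and it is worth recording.
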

  
  \begin{proof} For any $\al\in V^*$ and $u\in V$, $h_\#(\al)(u)=\mathrm{L}_\al^*u+i_\al B$ where $\mathrm{L}_\al(\be)=\al\bullet\be$ and $i_\al B\in V^{**}=V$. We denote by $\phi^{h_\#(\al)}$ the flow of the vector field $h_\#(\al)$. 
  	Then, for any $\al,\be,\ga\in V^*$,
  	\begin{eqnarray*} \na_{h_\#(\al)}(h)(\be,\ga)(u)&=&\frac{d}{dt}_{|t=0}\left( \prec\be\bullet\ga,\phi^{h_\#(\al)}(t,u)\succ+B(\be,\ga)   \right)\\
  		&=&\prec \be\bullet\ga,\mathrm{L}_\al^*u+i_\al B\succ\\
  		&=&\prec\al\bullet(\be\bullet\ga),u\succ+B(\al,\be\bullet\ga) \end{eqnarray*}and the result follows.
  	\end{proof}
  
  Conversely, we have the following result.
  
  \begin{proposition} Let $(\mathcal{A},\bullet,B)$ be a commutative and associative algebra endowed with a symmetric scalar 2-cocycle. Then:
  	\begin{enumerate}
  		\item  $\mathcal{A}^*$ carries a structure of a contravariant pseudo-Hessian structure $(\na,h)$  where $\na$ is the canonical affine structure of $\mathcal{A}^*$ and $h$ is given by
  	\[ h(u,v)(\al)=\prec\al, u(\al)\bullet v(\al)\succ+B(u(\al),v(\al)),\quad\al\in \mathcal{A}^*,u,v\in\Om^1(\mathcal{A}^*). \]\item When $B=0$, the leaves of the affine foliation  associated to $\mathrm{Im}h_\#$ are the orbits of the action $\Phi$ of $(\mathcal{A},+)$ on $\mathcal{A}^*$ given by
  	$\Phi(u,\al)=\exp(\mathrm{L}_u^*)(\al)$ \item  The associated Poisson tensor $\Pi$ on $T\mathcal{A}^*=\mathcal{A}^*\times \mathcal{A}^*$ is the affine Poisson tensor dual associated to the Lie algebra $(\mathcal{A}\times \mathcal{A},\br)$ endowed with the 2-cocycle $B_0$ where
  	\[ [(a,b),(c,d)]=(a\bullet d-b\bullet c,0)\esp B_0((a,b),(c,d))=B(a,d)-B(c,b). \]
  \end{enumerate}
  \end{proposition}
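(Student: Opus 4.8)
The plan is to handle the three assertions separately: the first is a direct application of the preceding proposition, while the other two reduce to explicit computations once $h_\#$ and $\Pi$ are written out. For the first assertion I would set $V=\mathcal{A}^*$, so that $V^*=\mathcal{A}$ carries the associative commutative product $\bullet$ together with the symmetric scalar $2$-cocycle $B$; the claim is then literally the statement that $(V,\na,h)$ is a contravariant pseudo-Hessian manifold, which is exactly the conclusion of the preceding proposition. No further work is required here.

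For the second assertion ($B=0$), I would start from the formula $h_\#(a)(\al)=\mathrm{L}_a^*\al$ established in the proof of the preceding proposition (with $\mathrm{L}_a(b)=a\bullet b$), valid for a constant covector $a\in\mathcal{A}=V^*$ and $\al\in\mathcal{A}^*$. Thus $h_\#(a)$ is the linear, hence complete, vector field on $\mathcal{A}^*$ with linear part $\mathrm{L}_a^*$ and flow $\al\mapsto\exp(t\,\mathrm{L}_a^*)\al$. Associativity and commutativity give $\mathrm{L}_a\mathrm{L}_b=\mathrm{L}_{a\bullet b}=\mathrm{L}_b\mathrm{L}_a$, so the $\mathrm{L}_a^*$ commute, the vector fields $h_\#(a)$ commute, and $u\mapsto\exp(\mathrm{L}_u^*)$ is a bona fide action $\Phi$ of $(\mathcal{A},+)$. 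Since the vectors $h_\#(a)(\al)$, $a\in\mathcal{A}$, span $(\mathrm{Im}h_\#)_\al$ at every point, the $\Phi$-orbit through $\al$ is an integral manifold of $\mathrm{Im}h_\#$; by Proposition \ref{orbit} these orbits are precisely the leaves of the affine foliation.

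For the third assertion I would proceed in three steps. First, check that $(\mathcal{A}\times\mathcal{A},\br)$ is a Lie algebra: skew-symmetry is immediate from commutativity of $\bullet$, and for the Jacobi identity the cyclic sum of the first components, of the form $(a\bullet d-b\bullet c)\bullet f$, cancels termwise by associativity and commutativity. Second, verify that $B_0$ is a scalar $2$-cocycle, i.e. $B_0([X,Y],Z)+B_0([Y,Z],X)+B_0([Z,X],Y)=0$; here I would introduce $T(x,y,z)=B(x\bullet y,z)$ and observe that the cocycle identity $B(x\bullet y,z)=B(x,y\bullet z)$, combined with the symmetry of $B$ and commutativity of $\bullet$, makes $T$ totally symmetric, after which the six terms of the cocycle condition cancel in pairs. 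Third, identify $T\mathcal{A}^*=\mathcal{A}^*\times\mathcal{A}^*$ with $(\mathcal{A}\times\mathcal{A})^*$, the base factor paired with $\mathcal{A}\times\{0\}$ and the fibre factor with $\{0\}\times\mathcal{A}$; choosing a basis $(e_i)$ of $\mathcal{A}$ with $e_i\bullet e_j=\sum_k C_{ij}^k e_k$ and $b_{ij}=B(e_i,e_j)$, the resulting linear coordinates $x_i$ (base) and $\mu_j$ (fibre) satisfy, for the affine Lie--Poisson bracket of $(\mathcal{A}\times\mathcal{A},\br,B_0)$, the relations $\{x_i,x_j\}=0$, $\{\mu_i,\mu_j\}=0$ and $\{x_i,\mu_j\}=\sum_k C_{ij}^k x_k+b_{ij}=h_{ij}$. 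This bracket corresponds to $\Pi=\sum_{ij}h_{ij}\,\partial_{x_i}\we\partial_{\mu_j}$, which is exactly the tensor read off in Remark \ref{rem1}, giving the claimed identification.

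The genuinely delicate point is the cocycle verification for $B_0$: the whole cancellation rests on recognizing that the cocycle property of $B$ upgrades the trilinear form $T$ to a fully symmetric object, without which the six terms do not pair up. The only other thing to watch is the bookkeeping in the final identification — pairing the base and fibre factors of $T\mathcal{A}^*$ with the correct copies of $\mathcal{A}$ so that the structure constants and the constant (cocycle) terms match $\Pi$ on the nose.
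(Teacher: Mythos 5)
Your proposal is correct and takes essentially the same route as the paper: parts (1) and (2) are reduced to the preceding results, and part (3) is established by evaluating both bivectors on the constant $1$-forms $a^h=(a,0)$ and $b^v=(0,b)$ — your coordinate identities $\{x_i,x_j\}=\{\mu_i,\mu_j\}=0$ and $\{x_i,\mu_j\}=\sum_k C_{ij}^kx_k+b_{ij}=h_{ij}$ are exactly the paper's computation $\Pi(a^h,b^v)=\Pi^*((a,0),(0,b))$ written in coordinates. You additionally supply details the paper dismisses as easy (the Jacobi identity for $\br$, the cocycle identity for $B_0$ via the totally symmetric trilinear form $T(x,y,z)=B(x\bullet y,z)$, and the orbit--leaf identification in part (2) using that the commuting complete linear fields $h_\#(a)$ span $\mathrm{Im}\,h_\#$ at every point), and these verifications are all correct.
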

  
  \begin{proof} It is only the third point which need to be checked. One can see easily that $\br$ is a Lie bracket on $\mathcal{A}\times \mathcal{A}$ and $B_0$ is a scalar 2-cocycle for this Lie bracket. For any $a\in \mathcal{A}\subset \Om^1(\mathcal{A}^*)$, $a^v=(0,a)\in \mathcal{A}\times \mathcal{A}\subset \Om^1(\mathcal{A}^*\times \mathcal{A}^*)$ and $a^h=(a,0)$. So
  	\[ \Pi(a^h,b^v)(\al,\be)=h(a,b)(\al)=\prec \al,a\bullet b\succ+B(a,b). \]
  	On the the other hand, if $\Pi^*$ is the Poisson tensor dual, then
  	\begin{eqnarray*} \Pi^*(a^h,b^v)(\al,\be)&=&\Pi^*((a,0),(0,b))(\al,\be)\\
  		&=&\prec (\al,\be),[(a,0),(0,b)]\succ+B_0((a,0),(0,b))\\
  		&=&\prec\al,a\bullet b\succ+B(a,b)\\
  		&=&\Pi(a^h,b^v)(\al,\be).
  		\end{eqnarray*}In the same way one can check the others equalities.
  \end{proof}
  
  This proposition can be used as a machinery to build examples of pseudo-Hessian manifolds. Indeed,  by virtue of Proposition \ref{orbit}, any orbit $L$ of the action $\Phi$ has an affine structure $\na_L$ and a pseudo-Riemannian metric $g_L$ such that $(L,\na_L,g_L)$ is a pseudo-Hessian manifold.
  
  \begin{example} We take $\mathcal{A}=\R^4$  with its canonical basis $(e_i)_{i=1}^4$ and $(e_i^*)_{i=1}^4$ is the dual basis. We endow $\mathcal{A}$  with the commutative associative product given by
  	\[ e_1\bullet e_1=e_2,\; e_1\bullet e_2=e_3,\; e_1\bullet e_3= e_2\bullet e_2=e_4, \]the others products are zero and we endow $\mathcal{A}^*$ with the linear contravariant pseudo-Hessian structure associated to $\bullet$.
  	 	We denote by $(a,b,c,d)$ the linear coordinates on $\mathcal{A}$ and $(x,y,z,t)$ the dual coordinates on $\mathcal{A}^*$.  
  	 We have
  	\[ \Phi(ae_1+be_2+ce_3+de_4^*,xe_1^*+ye_2^*+ze_3^*+te_4^*)=(x+ay+(\frac12a^2+b)z+(\frac16a^3+ab+c)t,y+az+(\frac12a^2+b)t,z+at,t) \]and
  	\[ X_{e_1}=y\partial_x+z\partial_y+t\partial_z,\; X_{e_2}=z\partial_x+t\partial_y,\; X_{e_3}=t\partial_x\esp X_{e_4}=0. \]
  	Let us describe the pseudo-Hessian structure of the hyperplane $M_c=\{t=c,c\not=0\}$ endowed with the coordinates $(x,y,z)$. We denote by $g_c$ the pseudo-Riemannian of $M_c$. We have, for instance,
  	\[ g_c(X_{e_1},X_{e_1})(x,y,z,c)=h(e_1,e_1)(x,y,z,c)=\prec e_1\bullet e_1,(x,y,z,c)\succ=y. \]
  	So, one can see that the matrix of $g_c$ in $(X_{e_1},X_{e_2},X_{e_3})$ is the passage matrix $P$ from  $(X_{e_1},X_{e_2},X_{e_3})$ to $(\partial_x,\partial_y,\partial_z)$ and hence
  	\[ g_c=\frac1{c}\left(2dxdz+dy^2-\frac{2z}{c}dydz+\frac{(z^2-yc)}{c^2}dz^2  \right). \]
  	The signature of this metric is $(+,+,-)$ if $c>0$ and $(+,-,-)$ if $c<0$. One can check easily that $g_c$ is the restriction of $\na d\phi$ to $M_c$, where
  	\[ \phi(x,y,z,t)=\frac{z^4}{12t^3}+\frac{y^2}{2t}-\frac{z^2y}{2t}+\frac{xz}{t}. \]
  
  \end{example}

  \subsection{Multiplicative contravariant pseudo-Hessian structures}
  A contravariant pseudo-Hessian structure $(\na,h)$ on a Lie group $G$ is called multiplicative if the multiplication $m:(G\times G,\na\oplus\na,h\oplus h)\too (G,\na,h)$ is affine and sends $h\oplus h$ to $h$.
  
  \begin{lemma}\label{le1} Let $G$ be a connected Lie group and $\na$ a connection on $G$ such that the multiplication $m:(G\times G,\na\oplus\na)\too (G,\na)$ preserves the connections. Then $G$ is abelian and $\na$ is bi-invariant.
  	
  \end{lemma}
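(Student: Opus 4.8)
The plan is to use the hypothesis that $m$ is affine in two complementary ways: restricting $m$ to a factor to obtain bi-invariance of $\na$, and feeding $m$ a \emph{mixed} pair consisting of a right-invariant and a left-invariant field in order to force the bracket of $\G$ to vanish. Throughout I write $\G$ for the Lie algebra of $G$, and for $X\in\G$ I denote by $X^{l}$ (resp. $X^{r}$) its left-invariant (resp. right-invariant) extension; recall that $\na$ is torsionless.

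First I would establish that $\na$ is bi-invariant. For fixed $g$ the inclusion $i_{g}\colon G\too G\times G$, $x\mapsto(g,x)$, is affine, since a curve with constant first component is a $\na\oplus\na$-geodesic precisely when its second component is a $\na$-geodesic. Hence $L_{g}=m\circ i_{g}$ is affine; being a diffeomorphism carrying $\na$-geodesics to $\na$-geodesics, and $\na$ being torsionless, it preserves $\na$, so $\na$ is left-invariant. The symmetric argument with $x\mapsto(x,g)$ shows $R_{g}$ is affine, so $\na$ is right-invariant as well, hence bi-invariant.

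For the abelian conclusion, the key inputs are that $(0,Y^{l})$ is $m$-related to $Y^{l}$ and $(X^{r},0)$ is $m$-related to $X^{r}$ — both immediate from $dm_{(a,b)}(u,v)=dR_{b}(u)+dL_{a}(v)$ together with invariance — and that $[X^{r},Y^{l}]=0$, because the flows of $X^{r}$ and $Y^{l}$ are left and right translations, which commute. Since $\na\oplus\na$ has no mixed terms, $(\na\oplus\na)_{(0,Y^{l})}(X^{r},0)=0$; affineness of $m$, which for $m$-related pairs $U\sim\bar U$ and $V\sim\bar V$ reads $dm((\na\oplus\na)_{U}V)=(\na_{\bar U}\bar V)\circ m$, then forces $(\na_{Y^{l}}X^{r})\circ m=0$, hence $\na_{Y^{l}}X^{r}=0$ for all $X,Y\in\G$.

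Finally I would convert this into a relation among left-invariant fields only. Writing $X^{r}=\sum_{i}c_{i}X_{i}^{l}$ in a left-invariant frame $(X_{i}^{l})$, where $c_{i}(g)$ are the components of $\Ad_{g^{-1}}X$, I would evaluate $\na_{Y^{l}}X^{r}$ at $e$; using $\frac{d}{dt}\Ad_{\exp(-tY)}X|_{t=0}=-\ad_{Y}X=[X,Y]$ this gives $(\na_{Y^{l}}X^{r})(e)=[X,Y]+(\na_{Y^{l}}X^{l})(e)$. As the left-hand side is $0$, we obtain $\na_{Y^{l}}X^{l}=[Y,X]^{l}$, and by symmetry $\na_{X^{l}}Y^{l}=[X,Y]^{l}$. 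Substituting these into the torsion-free identity $\na_{X^{l}}Y^{l}-\na_{Y^{l}}X^{l}=[X^{l},Y^{l}]=[X,Y]^{l}$ yields $2[X,Y]=[X,Y]$, so $[X,Y]=0$ for all $X,Y\in\G$; since $G$ is connected it is abelian. The main obstacle will be the bookkeeping in this last computation — expressing the right-invariant frame through the left-invariant one and correctly extracting the $\ad$ term — while everything else is a routine application of the definition of an affine map.
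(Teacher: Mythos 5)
Your argument is correct as written, but it proves a slightly different statement than the paper does, and the finish is genuinely different. Both proofs hinge on the same master fact: for $m$-related pairs of vector fields, affineness of $m$ makes the covariant derivatives $m$-related. The paper feeds this the pairs $(X_1,Y_1)$, $(X_2,Y_2)$ with the $X_i$ right-invariant and the $Y_i$ left-invariant, which are $m$-related to $X_i+Y_i$; setting the $Y$'s (resp.\ the $X$'s) to zero gives right (resp.\ left) invariance of $\na$ directly, and the mixed terms give $\na X=0=\na Y$ for \emph{every} right-invariant $X$ and left-invariant $Y$. It then expands a left-invariant $Y=\sum f_iX_i$ in a right-invariant frame: full parallelism forces $X_j(f_i)=0$, so every left-invariant field is right-invariant, $\Ad$ is trivial, and $G$ is abelian. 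You instead extract only the single mixed relation $\na_{Y^l}X^r=0$, expand $X^r$ in a left-invariant frame via $g\mapsto\Ad_{g^{-1}}X$, differentiate at $e$ to get $\na_{Y^l}X^l=[Y,X]^l$, and close with the torsion identity to obtain $2[X,Y]=[X,Y]$. That computation is correct (your use of left-invariance of $\na$ to globalize the identity at $e$ is legitimate, though evaluating at $e$ already suffices), and it is in effect the infinitesimal version of the paper's constancy-of-coefficients step; it buys a quick, frame-free punchline at the cost of an extra hypothesis, as follows. Incidentally, the fact $[X^r,Y^l]=0$ that you list as a key input is never actually used.

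The real discrepancy: the lemma assumes only ``a connection,'' and the paper's proof never invokes torsion-freeness, whereas yours needs it twice --- once in the bi-invariance step (a diffeomorphism preserving parametrized geodesics pins down the connection only when it is torsionless, since adding any skew-symmetric difference tensor leaves geodesics unchanged) and once in the final identity $\na_{X^l}Y^l-\na_{Y^l}X^l=[X^l,Y^l]$. For the paper's application the connection is flat, hence torsionless, so nothing is lost there; but as a proof of the lemma as stated your argument is strictly weaker. Note also that left and right invariance of $\na$ drop out of the same $m$-relatedness equation you already use (take one pair of invariant fields and set the other two to zero), so the geodesic detour --- and the torsion hypothesis it requires at that point --- is avoidable.
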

  \begin{proof} We will denote by $\chi^r(G)$ (resp. $\chi^l(G)$) the space of left invariant vector fields (resp. the right invariant vector fields) on $G$. It is clear that for any $X\in \chi^r(G)$ and $Y\in \chi^l(G)$, the vector field $(X,Y)$ on $G\times G$ is $m$-related to the vector field $X+Y$ on $G$: 
  	$$Tm(X_a,Y_b)=X_a. b+a. Y_b=X_{ab}+Y_{ab}=(X+Y)_{ab}
  	$$ 
  	It follows that for any $X_1,X_2\in \chi^r(G)$ and $Y_1,Y_2\in \chi^l(G)$, the vector field $(\nabla\oplus\nabla)_{(X_1,Y_1)}(X_2,Y_2)$ is $m$-related to $\nabla_{X_1+Y_1}(X_2+Y_2)$, hence:
  	$$ Tm((\nabla_{X_1}X_2)_a,(\nabla_{Y_1}Y_2)_b)=(\nabla_{(X_1+Y_1)}(X_2+Y_2))_{ab}
  	$$
  	So we get 
  	\begin{equation}\label{equ}
  	(\nabla_{X_1}X_2)_a.b+a.(\nabla_{Y_1}Y_2)_b=(\nabla_{X_1}X_2+\nabla_{X_1}Y_2+\nabla_{Y_1}X_2+\nabla_{Y_1}Y_2)_{ab}
  	\end{equation}
  	If we take $Y_1=0=Y_2$ we obtain that $\nabla$ is right invariant.  In the same way we get that $\nabla$ is left invariant. Now, if we return back to the equation \ref{equ} we obtain that for any $X\in \chi^r(G)$ and $Y\in \chi^l(G)$ we have $\nabla X=0=\nabla Y$. This implies that any left invariant vector field is also right invariant ; indeed, if $Y=\sum_{i=1}^{n}f_iX_i$ with $Y\in \chi^l(G)$ and $X_i\in \chi^r(G)$ then 
  	$X_jf_i=0$ for all $i,j=1,\cdot,n$. Hence the adjoint representation is trivial and hence $G$ must be abelian.
  	\end{proof}
  	
  	At the end of the paper, we give another proof of this Lemma based on parallel transport.
  \begin{corollary} Let $(\na,h)$ be multiplicative contravariant pseudo-Hessian structure on a simply connected Lie group $G$. Then $G$ is a vector space, $\na$ its canonical affine connection and $h$ is linear.
  \end{corollary}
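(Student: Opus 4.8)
The plan is to reduce the connection part entirely to Lemma~\ref{le1} and then to unwind the multiplicativity of $h$ by hand in the resulting canonical chart. First, since $(\na,h)$ is multiplicative, the multiplication $m:(G\times G,\na\oplus\na)\too(G,\na)$ is affine and in particular preserves the connections, so Lemma~\ref{le1} applies: $G$ is abelian and $\na$ is bi-invariant. In fact the proof of Lemma~\ref{le1} yields the sharper fact that $\na X=0$ for every invariant vector field $X$. Since $G$ is abelian and simply connected it is isomorphic to $(\G,+)$, i.e.\ $G$ is a vector space $V$, on which left and right translations are ordinary translations and the invariant vector fields are exactly the constant ones. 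A connection for which all constant vector fields are parallel has vanishing Christoffel symbols, hence is the canonical affine connection of $V$; this gives the first two assertions.

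Next I would treat the bivector part using the canonical trivialization $TV=V\times V$, under which a symmetric bivector field $h$ is a smooth map $x\mapsto h_x\in S^2V$ with $h_x(\al,\be)=h(\al,\be)(x)$ for $\al,\be\in V^*$. Because the group law is addition, $m(x,y)=x+y$ has differential $T_{(a,b)}m(u,v)=u+v$, so for $\al\in V^*$ the pullback $m^*\al$ at $(a,b)$ is $(\al,\al)$. The requirement that $m$ send $h\oplus h$ to $h$ then reads, for all $a,b\in V$ and $\al,\be\in V^*$,
\[ h_{a+b}(\al,\be)=(h_a\oplus h_b)(m^*\al,m^*\be)=h_a(\al,\be)+h_b(\al,\be), \]
that is $h_{a+b}=h_a+h_b$ in $S^2V$. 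Setting $a=b=0$ gives $h_0=0$, and the map $x\mapsto h_x$ is additive; being smooth it is $\R$-linear.

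Finally, linearity of $x\mapsto h_x$ is precisely the statement that $h$ is linear in the sense defined above: one recovers a commutative product $\bullet$ on $V^*$ by $\prec\al\bullet\be,x\succ:=h_x(\al,\be)$, and the constant term vanishes, $B=0$, since $h_0=0$. The only genuine point to watch, and the step I expect to be the main (if modest) obstacle, is the correct reading of ``$m$ sends $h\oplus h$ to $h$'' as the pushforward identity $h_{a+b}=(R_b)_*h_a+(L_a)_*h_b$, together with the observation that on the abelian group $V$ this pushforward is trivial under the canonical trivialization; it is exactly this collapse $(R_b)_*=(L_a)_*=\mathrm{Id}$ that turns the multiplicativity relation into plain additivity and thereby forces $h$ to be linear.
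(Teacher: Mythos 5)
Your proof is correct and takes essentially the same route the paper intends: the corollary is stated as an immediate consequence of Lemma \ref{le1}, and you fill in exactly the two implicit steps --- using the sharper fact from the lemma's proof that $\na X=0$ for invariant $X$ (bi-invariance alone would only give constant Christoffel symbols, so this point is genuinely needed to get the canonical connection), and unwinding multiplicativity of $h$ on $(V,+)$ into the additivity $h_{a+b}=h_a+h_b$, whence linearity with $B=0$.
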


\begin{example}
  Based on the classification of complex associative commutative algebras given in \cite{Ra}, we can give a list of examples of affine contravariant pseudo-Hessian structures up to dimension 4.  \begin{enumerate}
  	\item On $\mathbb{R}^{2}$:
  	\[
  	h_{1}=
  	\begin{pmatrix}
  	x_{2} & 0 \\
  	0 & 0
  	\end{pmatrix},\;  h_{2}=
  	\begin{pmatrix}
  	x_{1} & x_{2} \\
  	x_{2} & 0
  	\end{pmatrix}\esp h_3=
  	\begin{pmatrix}
  	x_{2} & 1 \\
  	1 & 0
  	\end{pmatrix}.
  	\]
  	
  	\item On $\mathbb{R}^{3}:$ 
  	\begin{eqnarray*}
  	h_{1}&=&
  	\begin{pmatrix}
  	a & 0 & x_{2}\\
  	0 & 0 & 0 \\
  	x_{2} & 0 & b
  	\end{pmatrix},\;\;
  	h_{2}=
  	\begin{pmatrix}
  	x_{2} & x_{3} & a\\
  	x_{3} & a & 0 \\
  	a & 0 & 0
  	\end{pmatrix},\;\;
  	h_{3}=
  	\begin{pmatrix}
  	a & 0 & x_{1}\\
  	0 & 0 & x_{2} \\
  	x_{1} & x_{2} & x_{3}
  	\end{pmatrix},\\
  	h_{4}&=&
  	\begin{pmatrix}
  	x_{2} & 0 & x_{2}\\
  	0 & 0 & x_{2}+a \\
  	x_{2} & x_{2}+a & x_{3}
  	\end{pmatrix}\esp h_{5}=
  	\begin{pmatrix}
  	x_{2} & 0 & x_{1}\\
  	0 & 0 & x_{2} \\
  	x_{1} & x_{2} & x_{3}
  	\end{pmatrix}.
  	\end{eqnarray*}
  	\item On $\mathbb{R}^{4}:$
  	\begin{eqnarray*}
  	h_{1}&=&
  	\begin{pmatrix}
  	x_{3} & a & x_{4}+b & 0\\
  	a & -x_{4}+c & 0 & 0\\
  	x_{4}+b & 0 & 0 & 0\\
  	0 & 0 & 0 & 0
  	\end{pmatrix},\;\;
  	h_{2}=\begin{pmatrix}
  	x_{2} & x_{3} & x_{4} & a\\
  	x_{3} & x_{4} & a & 0\\
  	x_{4} & a & 0 & 0\\
  	a & 0 & 0 & 0
  	\end{pmatrix},\;\;
  	h_{3}=
  	\begin{pmatrix}
  	x_{1} & x_{2} & x_{3} & x_{4}\\
  	x_{2} & 0 & 0 & 0\\
  	x_{3} & 0 & 0 & 0\\
  	x_{4} & 0 & 0 & 0
  	\end{pmatrix},\\
  	h_{4}&=&
  	\begin{pmatrix}
  	x_{1} & x_{2} & x_{3} & x_{4}\\
  	x_{2} & x_{4} & 0 & 0\\
  	x_{3} & 0 & 0 & 0\\
  	x_{4} & 0 & 0 & 0
  	\end{pmatrix}\esp 
  	h_{5}=
  	\begin{pmatrix}
  	x_{1} & x_{2} & x_{3} & x_{4}\\
  	x_{2} & x_{3} & x_{4} & 0\\
  	x_{3} & x_{4} & 0 & 0\\
  	x_{4} & 0 & 0 & 0
  	\end{pmatrix}.
  	\end{eqnarray*}
  	
  \end{enumerate}
  \end{example}

  \section{Quadratic contravariant pseudo-Hessian structures}\label{section5}

  Let $V$ be a vector space of dimension $n$. Denote by $\na$ its canonical affine connection. A  symmetric bivector field $h$ on $V$ is  quadratic if there exists a basis $\B$ of $V$ such that, for any $i,j=1,\ldots,n$,
  \[ h(dx_i,dx_j)=\sum_{l,k=1}^na_{l,k}^{i,j}x_lx_k, \]where the $a_{k,l}^{i,j}$ are real constants and $(x_1,\ldots,x_n)$ are the linear coordinates associated to $\B$.  
  
  For any linear endomorphism $A$ on $V$ we denote by $\wi A$ the associated linear vector field on $V$.
  
  The key point is that if $h$ is a quadratic contravariant pseudo-Hessian bivector field on $V$ then its divergence is a linear vector field, i.e., $\dev_\na(h)=\wi{L^h}$ where $L^h$ is a linear endomorphism of $V$.  Moreover, if $F=(A,u)$ is an affine transformation of $V$ then $\dev_\na(F_*h)=\wi{A^{-1}L^hA}$. So the Jordan form of $L_h$ is an invariant of the quadratic contravariant pseudo-Hessian structure. By using Maple we can classify quadratic contravariant pseudo-Hessian structures on $\R^2$. The same approach has been used by \cite{du} to classify quadratic Poisson structures on $\R^4$. Note that if $h$ is a quadratic  contravariant pseudo-Hessian tensor on $\R^n$ then its  associated Poisson tensor on $\C^n$ is also quadratic.
  
  \begin{theorem}\begin{enumerate}
  		\item Up to an affine isomorphism, there is two quadratic contravariant pseudo-Hessian structures on $\R^2$ which are divergence free
  		\[ h_1=\left(\begin{array}{cc}0&0\\0&ux^2
  		\end{array}  \right) \esp h_2=\left(\begin{array}{cc}\frac{r^2x^2}{c}-2rxy+cy^2&\frac{r^3x^2}{c^2}
  		-\frac{2r^2xy}c+ry^2\\\frac{r^3x^2}{c^2}-\frac{2r^2xy}c+ry^2&-\frac{2r^3xy}{c^2}+\frac{r^4x^2}{c^3}+\frac{r^2y^2}{c}
  		\end{array}  \right).\]
  		\item Up to an affine isomorphism, there is two quadratic contravariant pseudo-Hessian structures on $\R^2$ with the divergence equivalent to the Jordan form $\left( \begin{array}{cc}
  		a&1\\0&a
  		\end{array} \right)$,
  		\[ h_1=\left(\begin{array}{cc}cy^2+xy&0\\0&0
  		\end{array}  \right) \esp h_2=\left(\begin{array}{cc}\frac12xy+cy^2&\frac{y^2}4\\\frac{y^2}4&0
  		\end{array}  \right).\]
  		\item Up to an affine isomorphism, there is five quadratic contravariant pseudo-Hessian structures on $\R^2$ with diagonalizable divergence
  		\[ h_1=\left(\begin{array}{cc}ax^2&0\\0&by^2
  		\end{array}  \right),\;h_2=\left(\begin{array}{cc}ax^2+by^2&0\\0&0
  		\end{array}  \right),\;h_3=\left(\begin{array}{cc}ax^2&axy\\axy&ay^2
  		\end{array}  \right), \]
  		\[ h_4=\left(\begin{array}{cc}\frac{2r^2x^2}c-2rxy+cy^2&ry^2\\ry^2&\frac{2r^2y^2}c
  		\end{array}  \right)\esp h_5=\left(\begin{array}{cc}(\frac{2p^2}u+\frac{q}2)x^2+\frac{pqxy}u+\frac{q^2y^2}{4u}&px^2+qxy-\frac{pqy^2}{2u}\\px^2+qxy-\frac{pqy^2}{2u}&(\frac{2p^2}u+\frac{q}2)y^2+ux^2-2pxy
  		\end{array}  \right). \]
  		\item Up to an affine isomorphism, there is a unique quadratic pseudo-Hessian structure on $\R^2$ with the divergence having non real eigenvalues
  		\[  h=\left(\begin{array}{cc}-2pxy-ux^2+uy^2&px^2-py^2-2uxy\\px^2-py^2-2uxy&2pxy+ux^2-uy^2
  		\end{array}  \right). \]
  	\end{enumerate}
  	
  \end{theorem}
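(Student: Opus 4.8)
The plan is to turn the contravariant Codazzi equation into an explicit polynomial system on the coefficients of $h$, read off the divergence as a linear vector field, and then run a case analysis governed by the real conjugacy class of $L^h$. First I would write $h_{11}=A$, $h_{12}=h_{21}=B$, $h_{22}=C$, where $A,B,C$ are homogeneous quadratic forms in the coordinates $(x,y)$, so that each depends on three real parameters (nine in total). Setting $X_1=h_{\#}(dx)=A\partial_x+B\partial_y$ and $X_2=h_{\#}(dy)=B\partial_x+C\partial_y$, the proof of Theorem \ref{theo1} shows that $h$ is a pseudo-Hessian bivector field if and only if $[X_1,X_2]=0$, which here reads
\begin{equation*}
AB_x+BB_y-BA_x-CA_y=0,\qquad AC_x+BC_y-BB_x-CB_y=0.
\end{equation*}
Matching coefficients of the cubic monomials $x^3,x^2y,xy^2,y^3$ in each equation produces a polynomial system in the nine parameters. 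In parallel, from $\dev_\na(h)=(A_x+B_y)\partial_x+(B_x+C_y)\partial_y$ and the homogeneity of $A,B,C$, the divergence is the linear vector field $\widetilde{L^h}$ whose matrix $L^h$ is read directly off the two linear forms $A_x+B_y$ and $B_x+C_y$.

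Next I would exploit the invariance recalled before the statement: under a linear change of basis $P\in\mathrm{GL}(2,\R)$ one has $L^{P_*h}=P^{-1}L^hP$, so the real Jordan type of $L^h$ is a complete linear invariant. This partitions the problem into the four mutually exclusive strata of the statement: $L^h=0$ (divergence free), $L^h$ a single nontrivial Jordan block $\left(\begin{smallmatrix}a&1\\0&a\end{smallmatrix}\right)$, $L^h$ diagonalizable, and $L^h$ with a pair of non-real eigenvalues.

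In each stratum I would fix the normal form of $L^h$, substitute the corresponding linear constraints on the parameters into the Codazzi system, and solve it; the residual freedom is the stabilizer in $\mathrm{GL}(2,\R)$ of that normal form, together with the rescalings absorbed into the remaining constants, which I would use to bring the surviving solutions to the listed representatives. This case-by-case elimination is precisely the symbolic computation the authors carry out with Maple.

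The main obstacle is not any single identity but the bookkeeping of the nonlinear elimination: one must check that the list in each case is \emph{complete} (every solution of the Codazzi system with the prescribed Jordan type is $\mathrm{GL}(2,\R)$-equivalent to one in the list) and \emph{irredundant} (the surviving parameters such as $a,b,c,p,q,r,u$ are genuine moduli and no two normal forms are linearly equivalent). The delicate points are the boundary subcases—repeated or vanishing eigenvalues at the interface between the diagonalizable and Jordan-block strata, and the rescalings normalizing a nonzero parameter to $1$—which must be handled separately so that the four cases do not overlap and nothing is omitted; this is where a computer-algebra computation is essentially unavoidable.
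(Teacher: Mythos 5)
Your proposal follows essentially the same route as the paper: the authors likewise reduce the contravariant Codazzi equation in affine coordinates to the commutation $[h_\#(dx),h_\#(dy)]=0$ (via the proof of Theorem \ref{theo1}), use the conjugacy invariance $\dev_\na(F_*h)=\wi{A^{-1}L^hA}$ of the linear divergence to stratify by the real Jordan type of $L^h$, and perform the resulting polynomial elimination and normalization by the stabilizer with Maple. Since the paper offers no detail beyond this outline, your blind reconstruction, including the caveats about completeness, irredundancy, and the boundary subcases, is an accurate account of the intended proof.
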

  
  \begin{example} The study of quadratic contravariant pseudo-Hessian structures on $\R^3$ is more complicated and we give here a
  class of quadratic pseudo-Hessian structures on $\R^3$ of the form $\wi A\odot \wi{\mathrm{I}}_3$ where $\wi A$ is linear.
  \begin{enumerate}
  	\item 
   $A$ is diagonal:
  \[ h_1=\left( \begin{array}{ccc}x^2&xy&xz\\xy&y^2&yz\\xz&yz&z^2
  \end{array}  \right)\esp h_2=\left( \begin{array}{ccc}x^2&xy&0\\xy&y^2&0\\0&0&-z^2
  \end{array}  \right). \]
  
  \item  $A=\left(\begin{array}{ccc}a&1&0\\0&a&0\\0&0&b
  \end{array} \right)$:
  	\begin{eqnarray*} h_3&=&\left(\begin{array}{ccc}2x(y-px)&(y-px)y+pyx&pxz+(y-px)z\\
  	(y-px)y+pyx&2py^2&2pyz\\pxz+(y-px)z&2pyz&2pz^2
  	\end{array} \right),\\ h_4&=&\left(\begin{array}{ccc}2x(y+px)&(y+px)y-pyx&pxz+(y+px)z\\
  	(y+px)y+pyx&-2py^2&0\\pxz+(y+px)z&0&2pz^2
  	\end{array} \right). \end{eqnarray*}
  \end{enumerate}
  \end{example}
  	
  	\section{Right-invariant contravariant pseudo-Hessian structures on Lie groups}\label{section6}

  	Let $(\mathfrak{g},\bullet)$ be a left symmetric algebra, i.e., for any $u,,v,w\in\G$,
  	\[ \mathrm{ass}(u,v,w)=\mathrm{ass}(v,u,w)\esp \mathrm{ass}(u,v,w)=(u\bullet v)\bullet w-u\bullet(v\bullet w). \]
  	This implies that $[u,v]=u\bullet v-v\bullet u$ is a Lie bracket on $\G$ and
  	 $\mathrm{L}:\mathfrak{g}\rightarrow\mathrm{End}(\mathfrak{g})$, $u\mapsto \mathrm{L}_u$ is a representation of the  Lie algebra $(\G,\br)$. We denote by $\mathrm{L}_u$ the left multiplication by $u$.
  	 
  	  We consider a connected Lie group $G$ whose Lie algebra is $(\mathfrak{g},\br)$ and we define on $G$ a right invariant connection by 
  	\begin{equation}\label{eq}
  	\nabla_{u^{-}}v^{-}=-(u\bullet v)^{-},
  	\end{equation}
  	where $u^{-}$ is the right vector field associated to $u\in\mathfrak{g}$. This connection is torsionless and without curvature and hence $(G,\na)$ is an affine manifold. Let $r\in\G\otimes\G$ which is symmetric and let $r^-$ be the associated right invariant symmetric bivector field.
  	
  	\begin{proposition} $(G,\na,r^-)$ is a contravariant pseudo-Hessian manifold if and only if, for any $\al,\be,\ga\in\G^*$,
  		\begin{equation}\label{hessian} [[r,r]](\al,\be,\ga):= \prec\ga,r_\#([\al,\be]_r)-[r_\#(\al),r_\#(\be)]\succ=0, \end{equation}where
  		\[ [\al,\be]_r=\mathrm{L}_{r_\#(\al)}^*\be-\mathrm{L}_{r_\#(\be)}^*\al\esp \prec \mathrm{L}_{u}^*\al,v\succ=-\prec\al,u\bullet v\succ. \]
  		In this case,  the product on $\G$ given by $\al.\be=\mathrm{L}_{r_\#(\al)}^*\be$ is left symmetric, $\br_r$ is a Lie bracket  and $r_\#$ is a morphism of Lie algebras.
  		
  	\end{proposition}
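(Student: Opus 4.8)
The plan is to exploit right invariance to reduce the tensorial contravariant Codazzi equation to a single identity on $\G^*$, and then to recognize that identity as $[[r,r]]=0$. First I would observe that the expression $(\na_{h_\#(\al)}h)(\be,\ga)-(\na_{h_\#(\be)}h)(\al,\ga)$ is $C^\infty(G)$-linear in each of $\al,\be,\ga$ (because $h_\#$ is $C^\infty$-linear and $\na_X h$ is a genuine tensor), so it is itself a tensor and it suffices to evaluate it on the global frame of right invariant $1$-forms $\al^-$, $\al\in\G^*$. The key preliminary computation is the covariant derivative of a right invariant $1$-form: since $\prec\al^-,v^-\succ$ is constant and $\na_{u^-}v^-=-(u\bullet v)^-$, one gets $(\na_{u^-}\al^-)(v^-)=\prec\al,u\bullet v\succ=-\prec\mathrm{L}_u^*\al,v\succ$, that is $\na_{u^-}\al^-=-(\mathrm{L}_u^*\al)^-$.

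With this in hand, writing $h=r^-$ and $u=r_\#(\al)$, a direct expansion of $(\na_{u^-}r^-)(\be^-,\ga^-)$ (the term $u^-(r^-(\be^-,\ga^-))$ vanishes because $r^-(\be^-,\ga^-)=r(\be,\ga)$ is constant) yields $(\na_{h_\#(\al^-)}h)(\be^-,\ga^-)=\prec\ga,r_\#(\al.\be)-r_\#(\al)\bullet r_\#(\be)\succ$, where $\al.\be:=\mathrm{L}_{r_\#(\al)}^*\be$. Antisymmetrizing in $\al,\be$ and using $\al.\be-\be.\al=[\al,\be]_r$ together with $r_\#(\al)\bullet r_\#(\be)-r_\#(\be)\bullet r_\#(\al)=[r_\#(\al),r_\#(\be)]$ gives exactly $[[r,r]](\al,\be,\ga)$. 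Hence the contravariant Codazzi equation holds if and only if $[[r,r]]=0$, which is the asserted equivalence.

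For the final ``in this case'' claims I would proceed algebraically. The morphism property $r_\#([\al,\be]_r)=[r_\#(\al),r_\#(\be)]$ is literally the hypothesis $[[r,r]]=0$. For left symmetry of the product $\al.\be=\mathrm{L}_{r_\#(\al)}^*\be$, the crucial observation is that $\mathrm{L}^*$ is the dual of the representation $\mathrm{L}$, so that $\mathrm{L}^*:(\G,\br)\too\mathrm{End}(\G^*)$ is itself a Lie algebra representation, i.e.\ $\mathrm{L}_{[u,v]}^*=[\mathrm{L}_u^*,\mathrm{L}_v^*]$. Then I would compute the associator difference
\[
\mathrm{ass}(\al,\be,\ga)-\mathrm{ass}(\be,\al,\ga)
=\mathrm{L}_{r_\#(\al.\be-\be.\al)}^*\ga-[\mathrm{L}_{r_\#(\al)}^*,\mathrm{L}_{r_\#(\be)}^*]\ga,
\]
and substitute $r_\#(\al.\be-\be.\al)=r_\#([\al,\be]_r)=[r_\#(\al),r_\#(\be)]$ (from $[[r,r]]=0$) together with $[\mathrm{L}_{r_\#(\al)}^*,\mathrm{L}_{r_\#(\be)}^*]=\mathrm{L}_{[r_\#(\al),r_\#(\be)]}^*$ (representation property); the two terms cancel, proving that $(\G^*,.)$ is left symmetric. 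Finally, $[\al,\be]_r=\al.\be-\be.\al$ is the commutator of a left symmetric product, hence automatically a Lie bracket.

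I expect the main obstacle to be purely bookkeeping: keeping the signs consistent across the three conventions in play, namely the defining sign of $\na_{u^-}v^-=-(u\bullet v)^-$, the minus in the coadjoint-type formula $\prec\mathrm{L}_u^*\al,v\succ=-\prec\al,u\bullet v\succ$, and the transpose sign that makes $\mathrm{L}^*$ a representation. Once these are fixed the computations are short and the cancellations are exact; the only genuinely structural input is the pre-Lie (left symmetric) identity of $(\G,\bullet)$, which enters solely through the fact that $\mathrm{L}$, and hence $\mathrm{L}^*$, is a Lie algebra representation.
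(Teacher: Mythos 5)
Your proof is correct and takes essentially the same route as the paper's: compute $\na_{u^-}\al^-=-(\mathrm{L}_u^*\al)^-$, expand the contravariant Codazzi equation on right invariant forms (the derivative term vanishing since $r^-(\be^-,\ga^-)$ is constant), identify the antisymmetrized result with $[[r,r]]$, and then obtain left symmetry of $\al.\be=\mathrm{L}_{r_\#(\al)}^*\be$ from the morphism property $r_\#([\al,\be]_r)=[r_\#(\al),r_\#(\be)]$. The only difference is cosmetic: you spell out explicitly that $\mathrm{L}^*$ is a Lie algebra representation (so that $\mathrm{L}_{[u,v]}^*=[\mathrm{L}_u^*,\mathrm{L}_v^*]$), a fact the paper uses tacitly in its final associator computation.
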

  	\begin{proof} Note first that for any $\al\in\G^*$, $r^-_\#(\al^-)=(r_\#(\al))^-$ and $\na_{u^-}\al^-=-\left(\mathrm{L}_u^*\al \right)^-$ and hence, for any $\al,\be,\ga\in\G^*$,
  	\[ \na_{r^-_\#(\al^-)}(r^-)(\be^-,\ga^-)=r(\mathrm{L}_{r_\#(\al)}^*\be,\ga)+r(\be, \mathrm{L}_{r_\#(\al)}^*\ga). \]So, $(G,\na,r^-)$ is a contravariant pseudo-Hessian manifold if and only if, for any $\al,\be,\ga\in\G^*$,
  	\begin{eqnarray*}
  	0&=&r(\mathrm{L}_{r_\#(\al)}^*\be,\ga)+r(\be, \mathrm{L}_{r_\#(\al)}^*\ga)
  	-r(\mathrm{L}_{r_\#(\be)}^*\al,\ga)-r(\al, \mathrm{L}_{r_\#(\be)}^*\ga)\\
  	&=&\prec\ga,r_\#([\al,\be]_r)-r_\#(\al)\bullet r_\#(\be)+r_\#(\be)\bullet r_\#(\al)\succ\\
  	&=&\prec\ga,r_\#([\al,\be]_r)-[r_\#(\al), r_\#(\be)]\succ
  	\end{eqnarray*}and the first part of the proposition follows. Suppose now that $r_\#([\al,\be]_r)=[r_\#(\al), r_\#(\be)]$ for any $\al,\be\in\G^*$. Then, for any $\al,\be,\ga\in\G^*$,
  	\begin{eqnarray*}
  	\mathrm{ass}(\al,\be,\ga)-\mathrm{ass}(\be,\al,\ga)&=&\mathrm{L}_{r_\#([\al,\be]_r)}^*\ga-\mathrm{L}_{r_\#(\al)}^*\mathrm{L}_{r_\#(\be)}^*\ga+\mathrm{L}_{r_\#(\be)}^*\mathrm{L}_{r_\#(\al)}^*\ga=0.
  	\end{eqnarray*}This completes the proof.
  		\end{proof}
  		
  		\begin{definition}\begin{enumerate}\item
  			 Let $(\G,\bullet)$ be a left symmetric algebra. A symmetric bivector $r\in\G\otimes\G$ satisfying $[[r,r]]=0$ is called  a $S$-matrix.
  			
  		\item	A left symmetric algebra $(\G,\bullet,r)$ endowed with a $S$-matrix is called a contravariant pseudo-Hessian algebra.
  			\end{enumerate}
  		\end{definition}
  		
  		Let $(\G,\bullet,r)$ be a contravariant pseudo-Hessian algebra, $[u,v]=u\bullet v-v\bullet u$ and $G$ a connected Lie group  with $(\G,\br)$ as a Lie algebra.
  	We have shown that $G$ carries a right invariant contravariant pseudo-Hessian structure $(\na,r^-)$. On the other hand, in Section \ref{section3}, we have associated to $(\na,r^-)$ a flat connection $\overline{\na}$, a complex structure $J$ and a Poisson tensor $\Pi$ on $TG$. Now we will show  that $TG$ carries a structure of Lie group and the triple $(\overline{\na},J,\Pi)$ is right invariant. This structure of Lie group on $TG$ is different from the usual one defined by the adjoint action of $G$ on $\G$.

  Let us start  with a general algebraic construction which is interesting on its own.	Let $(\G,\bullet)$ be a left symmetric algebra, put
   $\Phi(\G)=\G\times\G$ and define a product $\star$ and  a  bracket on $\Phi(\G)$ by
  \[ (a,b)\star(c,d)=(a\bullet c,a\bullet d)\esp  [(a,b),(c,d)]=([a,c],a\bullet d-c\bullet b), \]for any $(a,b),(c,d)\in\Phi(\G)$.
  It is easy to check that $\star$ is left symmetric, $\br$ is the commutator of $\star$ and hence is a Lie bracket. We define also $J_0:\Phi(\G)\too\Phi(\G)$ by $J_0(a,b)=(b,-a)$. It is also a straightforward computation to check that
  \[ N_{J_0}((a,b),(c,d))=[J_0(a,b),J_0(c,d)]-J_0[(a,b),J_0(c,d)]-J_0[J_0(a,b),(c,d)]-
  [(a,b),(c,d)]=0. \]

For $r\in\otimes^2\G$ symmetric, we define $R\in\otimes^2\Phi(\G)$  by
  	\begin{equation}\label{R} R((\al_1,\be_1),(\al_2,\be_2))=r(\al_1,\be_2)-r(\al_2,\be_1),
  	 \end{equation}for any $\al_1,\al_2,\be_1,\be_2\in\G^*$.    We have obviously that $R_\#(\al_1,\be_1)=(-r_\#(\be_1),r_\#(\al_1))$.
  	\begin{proposition}  $[[r,r]]=0$ if and only if $[R,R]=0$, where $[R,R]$ is the Schouten bracket associated to the Lie algebra structure of $\Phi(\G)$ and given by
  		\[ [R,R](\al,\be,\ga)=\oint_{\al,\be,\ga}\prec\ga,[R_\#(\al),R_\#(\be)]\succ,\quad \al,\be,\ga\in\Phi^*(\G). \]

  	\end{proposition}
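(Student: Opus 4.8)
The plan is to reduce the whole equivalence to a single master identity that expresses $[R,R]$, evaluated on a triple of elements of $\Phi^*(\G)=\G^*\times\G^*$, as a cyclic sum of values of $[[r,r]]$. Throughout I would write a generic triple as $(\al_1,\be_1),(\al_2,\be_2),(\al_3,\be_3)$ and abbreviate $A_i=r_\#(\al_i)$ and $B_i=r_\#(\be_i)$ in $\G$.

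First I would compute the integrand $\prec(\al_3,\be_3),[R_\#(\al_1,\be_1),R_\#(\al_2,\be_2)]\succ$. Using the already noted formula $R_\#(\al_i,\be_i)=(-B_i,A_i)$ together with the bracket $[(a,b),(c,d)]=([a,c],a\bullet d-c\bullet b)$ of $\Phi(\G)$, a direct computation gives
\[ [R_\#(\al_1,\be_1),R_\#(\al_2,\be_2)]=\left([B_1,B_2],\,B_2\bullet A_1-B_1\bullet A_2\right), \]
so that, pairing with $(\al_3,\be_3)$ via $\prec(\al,\be),(X,Y)\succ=\prec\al,X\succ+\prec\be,Y\succ$, the integrand equals $\prec\al_3,[B_1,B_2]\succ+\prec\be_3,B_2\bullet A_1-B_1\bullet A_2\succ$. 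Taking the cyclic sum over $(1,2,3)$ then produces three ``Lie-bracket'' terms $\prec\al_i,[B_j,B_k]\succ$ and six ``mixed'' terms of the shape $\pm\prec\be_i,B_j\bullet A_k\succ$.

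Next I would rewrite each Lie-bracket term through $[[r,r]]$. Unwinding the definitions, namely $[\mu,\nu]_r=\mathrm{L}^*_{r_\#(\mu)}\nu-\mathrm{L}^*_{r_\#(\nu)}\mu$, the symmetry of $r$, and $\prec\mathrm{L}^*_u\al,v\succ=-\prec\al,u\bullet v\succ$, one obtains the identity
\[ [[r,r]](\be_1,\be_2,\al_3)=-\prec\al_3,[B_1,B_2]\succ-\prec\be_2,B_1\bullet A_3\succ+\prec\be_1,B_2\bullet A_3\succ \]
together with its two cyclic images. Substituting these into the cyclic sum replaces each Lie-bracket term by $-[[r,r]]$ plus two mixed terms. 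The hard part, and essentially the only genuine computation, is bookkeeping: one must check that these six newly produced mixed terms cancel, pairwise, against the six mixed terms already present. After that cancellation one is left with the master identity
\[ [R,R]\big((\al_1,\be_1),(\al_2,\be_2),(\al_3,\be_3)\big)=-\oint_{(1,2,3)}[[r,r]](\be_1,\be_2,\al_3). \]

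Finally both implications drop out of this identity. If $[[r,r]]=0$ the right-hand side vanishes identically, hence $[R,R]=0$. Conversely, since $[[r,r]]$ is trilinear, setting $\al_1=\al_2=0$ kills the last two cyclic summands (each then carrying a zero argument) and leaves $[R,R]\big((0,\be_1),(0,\be_2),(\al_3,\be_3)\big)=-[[r,r]](\be_1,\be_2,\al_3)$; as $\be_1,\be_2,\al_3\in\G^*$ are arbitrary, $[R,R]=0$ forces $[[r,r]]=0$. Thus the only real obstacle is organizational, ensuring the index bookkeeping is kept straight so that the mixed $\bullet$-products cancel exactly.
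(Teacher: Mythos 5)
Your proposal is correct and takes essentially the same route as the paper: both proofs reduce the equivalence to the identity $[R,R]\big((\al_1,\be_1),(\al_2,\be_2),(\al_3,\be_3)\big)=-\oint_{(1,2,3)}[[r,r]](\be_1,\be_2,\al_3)$, obtained by direct computation from $R_\#(\al_i,\be_i)=(-r_\#(\be_i),r_\#(\al_i))$ and the bracket of $\Phi(\G)$ (the paper regroups the mixed $\bullet$-terms via $\mathrm{L}^*$ and $r_\#$, while you substitute your identity for the Lie-bracket terms and verify the pairwise cancellation — the same bookkeeping organized slightly differently). Your explicit derivation of the converse by setting $\al_1=\al_2=0$ merely spells out what the paper leaves implicit in ``the result follows.''
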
	
  	\begin{proof} For any $\al=(\al_1,\al_2),\be=(\be_1,\be_2),\ga=(\ga_1,\ga_2)\in\Phi(\G)^*$,
  	\begin{eqnarray*}
  	\prec\ga,[R_\#(\al),R_\#(\be)]&=&
  	\prec (\ga_1,\ga_2),[(-r_\#(\al_2),r_\#(\al_1)),(-r_\#(\be_2),r_\#(\be_1))]\succ\\
  	&=&\prec\ga_1,[r_\#(\al_2),r_\#(\be_2)]\succ-\prec\ga_2,r_\#(\al_2)\bullet r_\#(\be_1)\succ+\prec\ga_2,r_\#(\be_2)\bullet r_\#(\al_1)\succ\\
  	&=&\prec\ga_1,[r_\#(\al_2),r_\#(\be_2)]\succ+\prec\be_1,r_\#(\mathrm{L}_{r_\#(\al_2)}^*\ga_2)\succ-\prec\al_1,r_\#(\mathrm{L}_{r_\#(\be_2)}^*\ga_2)\succ,\\
  	\prec\be,[R_\#(\ga),R_\#(\al)]\succ&=&
  	\prec\be_1,[r_\#(\ga_2),r_\#(\al_2)]\succ+\prec\al_1,r_\#(\mathrm{L}_{r_\#(\ga_2)}^*\be_2)\succ-\prec\ga_1,r_\#(\mathrm{L}_{r_\#(\al_2)}^*\be_2)\succ\\
  	\prec\al,[R_\#(\be),R_\#(\ga)]\succ&=&
  	\prec\al_1,[r_\#(\be_2),r_\#(\ga_2)]\succ+\prec\ga_1,r_\#(\mathrm{L}_{r_\#(\be_2)}^*\al_2)\succ-\prec\be_1,r_\#(\mathrm{L}_{r_\#(\ga_2)}^*\al_2)\succ.
  	\end{eqnarray*}	So
  	\[ [R,R](\al,\be,\ga)=-
  	[[r,r]](\be_2,\ga_2,\al_1)-[[r,r]](\ga_2,\al_2,\be_1)-[[r,r]](\al_2,\be_2,\ga_1)
  	 \]and the result follows.
  			\end{proof}

  Let $G$ be a Lie group whose Lie algebra is $(\G,\br)$ and 	let $\rho:G\too\mathrm{GL}(\G)$ be the homomorphism of groups such that $d_e\rho=\mathrm{L}$ where $\mathrm{L}:\G\too \mathrm{End}(\G)$ is the representation associated to $\bullet$. 
  	Then the  product
  	\[ (g,u).(h,v)=(gh,u+\rho(g)(v)),\quad g,h\in G,u,v\in\G \]induces a Lie group structure on $G\times\G$ whose Lie algebra is $(\Phi(\G),\br)$. The complex endomorphism $J_0$ and the left symmetric product $\star$ induce a right invariant complex tensor $J_0^-$ and a a right invariant connection $\wi{\na}$ given by
  	\[ J_0^-(a,b)^-=(b,-a)^-\esp \wi{\na}_{(a,b)^-}(c,d)^-=-((a,b)\star(c,d))^-. \]
  	
  	Let $r\in\otimes^2\G$ symmetric such that $[[r,r]]=0$, $r^-$  the associated right invariant symmetric bivector field and $\na$ the affine connection given by \eqref{eq}. Then $(G,\na,r^-)$ is a contravariant pseudo-Hessian manifold and let $\overline{\na}$, $J$ and $\Pi$ be  the associated structure on $TG$ defined in Section \ref{section3}.

  	\begin{theorem}\label{main} If we identify $TG$ with $G\times\G$ by $u_g\too (g,T_gR_{g^{-1}}u_g)$ we denote also by $\Pi$,  $\overline{\na}$ and $J$ the images of $\Pi$,  $\overline{\na}$ and $J$ under this identification then $\Pi=R^-$,  $\overline{\na}=\wi{\na}$ and $J=J_0^-$.
  		
  	\end{theorem}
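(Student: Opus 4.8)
\noindent\emph{Proof strategy.} The plan is to pull everything back to the right-invariant frame of the Lie group $G\times\G$ via the right trivialization $u_g\mapsto(g,T_gR_{g^{-1}}u_g)$, and to exploit that all three objects are determined by their values on that frame. The whole proof then hinges on one computation: identifying, under this trivialization, the horizontal and vertical lifts of a right-invariant vector field $u^-$ ($u\in\G$) with right-invariant vector fields on $G\times\G$. Concretely, I would prove
\[ (u^-)^v\longleftrightarrow (0,u)^-\esp (u^-)^h\longleftrightarrow (u,0)^-, \]
where $(a,b)^-$ denotes the right-invariant vector field on $G\times\G$ extending $(a,b)\in\Phi(\G)$.

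First I would treat the vertical lift, which is immediate: the flow of $(u^-)^v$ sends $w$ to $w+tu^-(p(w))$, i.e. $(g,\xi)\mapsto(g,\xi+tu)$ in the trivialization; since the one-parameter subgroup of $G\times\G$ generated by $(0,u)$ is $t\mapsto(e,tu)$, its left translation is exactly $(g,\xi)\mapsto(g,\xi+tu)$, so the two flows coincide. For the horizontal lift I would compute parallel transport for the connection \eqref{eq}: writing the right logarithmic derivative of a curve as $\eta(t)$ and a vector field along it in the right-invariant frame as $\zeta(t)$, the rule $\na_{u^-}v^-=-(u\bullet v)^-$ yields the transport equation $\dot\zeta=\eta\bullet\zeta$. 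Along the flow $g\mapsto\exp(tu)g$ of $u^-$ one has $\eta\equiv u$, so transport reads $(g,\xi)\mapsto(\exp(tu)g,\rho(\exp(tu))\xi)$, using $\exp(t\mathrm{L}_u)=\rho(\exp(tu))$ with $\mathrm{L}_u=d_e\rho(u)$. The one-parameter subgroup generated by $(u,0)$ is $t\mapsto(\exp(tu),0)$, whose left translation is again $(g,\xi)\mapsto(\exp(tu)g,\rho(\exp(tu))\xi)$; matching the flows gives $(u^-)^h\leftrightarrow(u,0)^-$.

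Granting this correspondence, each of the three equalities reduces to a termwise check on the global frame $\{(u^-)^h,(u^-)^v\}$, which suffices because connections, $(1,1)$-tensors and bivector fields are fixed by their values on a frame and the target objects $\wi{\na},J_0^-,R^-$ are right-invariant. For $\overline{\na}=\wi{\na}$ I would run the four cases of \eqref{con} through the correspondence and compare with $\wi{\na}_{(a,b)^-}(c,d)^-=-((a,b)\star(c,d))^-$, using $(a,b)\star(c,d)=(a\bullet c,a\bullet d)$ and $\na_{u^-}v^-=-(u\bullet v)^-$. For $J=J_0^-$ the defining relations $J\big((u^-)^h\big)=(u^-)^v$ and $J\big((u^-)^v\big)=-(u^-)^h$ translate, via the correspondence, into the action of $J_0$ on the generators $(u,0)$ and $(0,u)$. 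For $\Pi=R^-$ I would use \eqref{eqpi} together with $r^-_\#(\al^-)=(r_\#\al)^-$ to obtain $\Pi_\#((\al^-)^h)=(0,r_\#\al)^-$ and $\Pi_\#((\al^-)^v)=(-r_\#\al,0)^-$, and compare with $R_\#(\al_1,\be_1)=(-r_\#\be_1,r_\#\al_1)$ read off from \eqref{R}.

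The one genuinely nontrivial step is the horizontal-lift computation: recognizing parallel transport of \eqref{eq} in the right trivialization as the one-parameter subgroups of $G\times\G$, which is exactly where the left-symmetric product and the representation $\rho$ enter. Everything downstream is bookkeeping with the defining formulas \eqref{con}, \eqref{eqpi}, \eqref{R} and the products $\bullet$ and $\star$.
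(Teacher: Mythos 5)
Your proposal is correct and follows essentially the same route as the paper: your transport equation $\dot\zeta=\eta\bullet\zeta$ in the right trivialization is exactly Proposition~\ref{par}, your key correspondence $(u^-)^h=(u,0)^-$ and $(u^-)^v=(0,u)^-$ is the paper's identity $(a,b)^-=(a^-)^h+(b^-)^v$ (which the paper proves by evaluating $T_{(e,0)}R_{(g,u)}(a,b)=(a^-(g),\,b+a\bullet u)$ pointwise, where you instead match the flows with left translations by the one-parameter subgroups $(\exp(tu),0)$ and $(e,tu)$ --- a cosmetic difference), and the concluding frame checks for $\Pi$, $\overline{\nabla}$ and $J$ are the same bookkeeping. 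One caveat: carried out literally with the paper's stated conventions $JX^h=X^v$, $JX^v=-X^h$ and $J_0(a,b)=(b,-a)$, your translation yields $J\bigl((u,0)^-\bigr)=(0,u)^-$, i.e.\ $J=(-J_0)^-$ rather than $J_0^-$; this sign discrepancy is a slip internal to the paper (its own proof line $J(a,b)^-=(b^-)^h-(a^-)^v$ tacitly uses the opposite convention for $J$) and is harmless since $-J_0$ is also a complex structure, but it is not a gap in your argument --- just fix the convention explicitly in your write-up.
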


  	To prove this theorem, we need some preparation.
  	\begin{proposition}\label{par} Let $(G,\na)$ be a Lie group endowed with a right invariant connection
  		 and $\ga:[0,1]\too G$ a curve. Let  $V:[0,1]\too TG$ be a vector field along $\ga$.  We define $\mu:[0,1]\too \G$ and $W:[0,1]\too\G$ by
  		\[ \mu(t)=T_{\ga(t)}R_{\ga(t)^{-1}}(\ga'(t))\esp W(t)=T_{\ga(t)}R_{\ga(t)^{-1}}(V(t)). \]
  		Then $V$ is parallel along $\ga$ with respect $\na$ if and only if
  		\[ W'(t)-\mu(t)\bullet W(t)=0, \]
  		where $u\bullet v=-(\na_{u^-}v^-)(e)$.
  		\end{proposition}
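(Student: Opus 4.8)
The plan is to express the parallel transport condition in terms of the trivialization $W(t)=T_{\ga(t)}R_{\ga(t)^{-1}}(V(t))$. Recall that $V$ is parallel along $\ga$ precisely when $\na_{\ga'(t)}V=0$, so the key is to differentiate the identity $V(t)=T_eR_{\ga(t)}(W(t))$ and relate the result to the covariant derivative. First I would write, for each fixed $t_0$, the vector field $V$ near $\ga(t_0)$ as a combination of right-invariant vector fields with time-dependent coefficients: since $(e_1,\ldots,e_n)$ is a basis of $\G$ with associated right-invariant vector fields $e_i^-$, we may write $V(t)=\sum_i W_i(t)\, e_i^-(\ga(t))$ where $W(t)=\sum_i W_i(t)e_i$. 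Similarly $\ga'(t)=\mu(t)^-(\ga(t))=\sum_j \mu_j(t)\,e_j^-(\ga(t))$.

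The computation of $\na_{\ga'(t)}V$ then splits via the Leibniz rule into two terms: the derivative hitting the coefficients $W_i(t)$, which produces $\sum_i W_i'(t)e_i^-(\ga(t)) = (W'(t))^-(\ga(t))$, and the derivative hitting the right-invariant frame, which produces $\sum_{i,j}\mu_j(t)W_i(t)\,\na_{e_j^-}e_i^-(\ga(t))$. Here is the crucial input: the connection is right invariant and defined (as in \eqref{eq}) by $\na_{u^-}v^-=-(u\bullet v)^-$, equivalently $u\bullet v=-(\na_{u^-}v^-)(e)$, which is exactly the product stated in the proposition. Hence the second term equals $-\sum_{i,j}\mu_j(t)W_i(t)\,(e_j\bullet e_i)^-(\ga(t)) = -(\mu(t)\bullet W(t))^-(\ga(t))$. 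Combining the two pieces gives
\[
\na_{\ga'(t)}V \;=\; \bigl(W'(t)-\mu(t)\bullet W(t)\bigr)^-(\ga(t)).
\]
Since the map $u\mapsto u^-(\ga(t))$ is a linear isomorphism of $\G$ onto $T_{\ga(t)}G$, the left-hand side vanishes if and only if $W'(t)-\mu(t)\bullet W(t)=0$, which is the desired equivalence.

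The one point requiring care is the Leibniz-rule justification for differentiating $V$ along $\ga$. Strictly, $\na_{\ga'(t)}V$ is the covariant derivative of a vector field defined only along the curve, so I would either extend the coefficient functions $W_i$ to a neighborhood (the value of the covariant derivative along $\ga$ depends only on the restriction to $\ga$, so any extension works) or invoke the standard formula $\frac{D V}{dt}=\sum_i W_i'(t)\,e_i^-\big|_{\ga(t)}+\sum_i W_i(t)\,\na_{\ga'(t)}e_i^-$ for the covariant derivative of a vector field along a curve in terms of a local frame. The main obstacle is thus purely bookkeeping: keeping straight that right invariance of $\na$ lets us evaluate $\na_{e_j^-}e_i^-$ at the arbitrary point $\ga(t)$ using its value $-(e_j\bullet e_i)^-$ everywhere, and that the trivialization $T R_{g^{-1}}$ intertwines the frame $e_i^-$ with the constant basis $e_i$ of $\G$. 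Once these identifications are made explicit, the equivalence is immediate.
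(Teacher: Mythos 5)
Your proposal is correct and follows essentially the same route as the paper's proof: both expand $V$ and $\ga'$ in the right-invariant frame with time-dependent coefficients, apply the Leibniz rule for the covariant derivative along the curve, and use right invariance to replace $\na_{e_j^-}e_i^-$ by $-(e_j\bullet e_i)^-$, arriving at $\na_{\ga'(t)}V=\bigl(W'(t)-\mu(t)\bullet W(t)\bigr)^-(\ga(t))$. Your extra remarks on extending the coefficients and on the trivialization intertwining the frame with the constant basis are just explicit versions of steps the paper leaves implicit.
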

  	
  	\begin{proof} We consider $(u_1,\ldots,u_n)$ a basis of $\G$ and $(X_1,\ldots,X_n)$ the corresponding right invariant vector fields. Then
  		\[ \begin{cases}\di
  		\mu(t)=\sum_{i=1}^n\mu_i(t)u_i,\;W(t)=\sum_{i=1}^nW_i(t)u_i,\\
  		\di \ga'(t)=\sum_{i=1}^n\mu_i(t)X_i,\;V(t)=\sum_{i=1}^nW_i(t)X_i.
  		\end{cases} \]	Then
  		\begin{eqnarray*}
  			\na_{t}V(t)&=&\sum_{i=1}^nW_i'(t)X_i+
  			\sum_{i=1}^nW_i(t)\na_{\ga'(t)}X_i\\
  			&=&\sum_{i=1}^nW_i'(t)X_i+\sum_{i,j=1}^nW_i(t)\mu_j(t)\na_{X_j}X_i\\
  			&=&\sum_{i=1}^nW_i'(t)X_i-\sum_{i,j=1}^nW_i(t)\mu_j(t)(u_j\bullet u_i)^-\\
  			&=&\left(W'(t)-\mu(t)\bullet W(t)\right)^-
  		\end{eqnarray*}and the result follows having in mind that $u^-$ is the right invariant vector field associated to $u\in\G$.
  	\end{proof}
  	
  	Let $(G,\na)$ be a Lie group endowed with a right invariant connection. Then $\na$ induces a splitting of $TTG=\ker dp\oplus \mathcal{H}$. For any tangent vector  $X\in T_gG$, we denote by $X^v,X^h\in T_{(g,u)}TG$ the vertical and the horizontal lift of $X$.

  	\begin{proposition} If we identify $TG$ to $G\times\G$ by $X_g\mapsto (g,T_gR_{g^{-1}}(X_g))$ then for any $X\in T_gG$,
  		\[ X^v(g,u)=(0,T_gR_{g^{-1}}(X))\esp X^h(g,u)=(X,T_gR_{g^{-1}}(X)\bullet u). \]
  		
  	\end{proposition}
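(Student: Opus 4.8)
The plan is to compute both lifts directly from their definitions, reading everything through the right-trivialization $\iota:TG\too G\times\G$, $X_g\mapsto(g,T_gR_{g^{-1}}(X_g))$. Under this identification a tangent vector to $TG$ at the point $(g,u)$ becomes a pair in $T_gG\times\G$, the projection $p$ becomes the first projection $G\times\G\too G$, and its differential sends $(A,B)\mapsto A$. This already pins down the base component of each lift of a vector $X\in T_gG$: since $X^v\in\ker Tp$ and $Tp(X^h)=X$, the vertical lift must have base component $0$ and the horizontal lift must have base component $X$. So the whole problem reduces to computing the $\G$-component of each lift.

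For the vertical lift I would use the defining flow. The vertical lift of $X$ at the point corresponding to $(g,u)$ is the velocity at $t=0$ of the curve $t\mapsto \iota^{-1}(g,u)+tX$ lying inside the fiber $T_gG$ (the fibrewise translation that generates $X^v$). Applying $\iota$ and using that $T_gR_{g^{-1}}$ is linear, this curve becomes $t\mapsto\bigl(g,\,u+t\,T_gR_{g^{-1}}(X)\bigr)$, whose velocity at $t=0$ is $(0,T_gR_{g^{-1}}(X))$. This yields the first formula with essentially no computation.

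The horizontal lift is where the content lies, and the key tool is Proposition \ref{par}. I would choose a curve $\ga$ in $G$ with $\ga(0)=g$ and $\ga'(0)=X$, and let $V(t)\in T_{\ga(t)}G$ be the parallel field along $\ga$ with $V(0)=\iota^{-1}(g,u)$; by the standard description of horizontal vectors as tangents to parallel transport, $X^h$ is the velocity at $t=0$ of the curve $t\mapsto V(t)$ in $TG$. Reading this curve through $\iota$ gives $t\mapsto(\ga(t),W(t))$ with $W(t)=T_{\ga(t)}R_{\ga(t)^{-1}}(V(t))$ and $W(0)=u$. Proposition \ref{par} states that parallelism of $V$ is exactly the equation $W'(t)=\mu(t)\bullet W(t)$ with $\mu(t)=T_{\ga(t)}R_{\ga(t)^{-1}}(\ga'(t))$. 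Evaluating at $t=0$, where $\mu(0)=T_gR_{g^{-1}}(X)$ and $W(0)=u$, gives $W'(0)=T_gR_{g^{-1}}(X)\bullet u$, so the velocity of $(\ga(t),W(t))$ at $t=0$ is $(X,\,T_gR_{g^{-1}}(X)\bullet u)$, which is the desired formula.

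The only genuinely delicate point will be matching the two descriptions of the horizontal lift, as $\ker K$ versus as the tangent to parallel transport, and keeping the trivialization bookkeeping straight: that the condition $V(0)=\iota^{-1}(g,u)$ translates into $W(0)=u$, and that the product in Proposition \ref{par} is exactly the left symmetric product $\bullet$ with $u\bullet v=-(\na_{u^-}v^-)(e)$, i.e. the same $\bullet$ defining $\na$ in \eqref{eq}. Once these identifications are fixed, both formulas fall out immediately and no further calculation is needed.
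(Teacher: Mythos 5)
Your proposal is correct and follows essentially the same route as the paper: the vertical lift is read off from the fibrewise translation through the trivialization, and the horizontal lift is obtained by describing $X^h$ as the initial velocity of a parallel field $V$ along a curve $\ga$ with $\ga'(0)=X$, then invoking Proposition \ref{par} to get $W'(0)=T_gR_{g^{-1}}(X)\bullet u$. The only difference is cosmetic — you make explicit the bookkeeping (that $Tp$ becomes the first projection and $V(0)=\iota^{-1}(g,u)$ gives $W(0)=u$) that the paper leaves implicit.
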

  	\begin{proof} The first relation is obvious. Recall that the horizontal lift of $X$ at $u_g\in TG$ is given by:
  		\[ X^h(u_g)=\frac{d}{dt}_{|t=0}V(t) \]where $V:[0,1]\too TG$ is the parallel vector field along
  		$\ga:[0,1]\too G$ a curve such that $\ga(0)=g$ and $\ga'(0)=X$. If we denote by $\Theta_R:TG\too G\times\G$ the identification $u_g\mapsto(g,T_gR_{g^{-1}}(u_g))$ then, by virtue of Proposition \ref{par},
  		\[ T_{u_g}\Theta_R(X^h)=\frac{d}{dt}_{|t=0}(\ga(t),W(t))=(X,T_gR_{g^{-1}}(X)\bullet u). \]
  		
  	\end{proof}

  	We consider now a left symmetric algebra $(\G,\bullet)$, $G$ a connected Lie group  associated to $(\G,\br)$, $\na$ the right invariant affine connection associated to $\bullet$. We have seen that $G\times\G$ has a structure of Lie group. We identify $TG$ to $G\times \G$ and, for any vector field $X$ on $G$, we denote by $X^v$ and $X^h$ the vector fields on $G\times\G$ obtained by the identification from the horizontal and the vertical lift of $X$. For $a,b\in\G$, $\al,\be\in \G^*$, $a^-$ (resp. $\al^-$) is the right invariant vector field (resp. 1-form) on $G$ associated to $a$ (resp. $\al$), $(a,b)^-$ (resp. $(\al,\be)^-$) the right invariant vector field (resp. 1-form) on $G\times\G$ associated to $(a,b)$ (resp. $(\al,\be)$). 
  	\begin{proposition} For any $(a,b)\in\G\times\G$ and $(\al,\be)\in\G^*\times\G^*$,
  		\[ (a,b)^-=(a^-)^h+(b^-)^v\esp (\al,\be)^-=(\al^-)^h+(\be^-)^v. \]
  		
  	\end{proposition}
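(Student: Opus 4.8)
The plan is to prove both identities by computing the right invariant fields on $G\times\G$ explicitly and comparing them---for the vector fields, with the horizontal and vertical lift formulas of the preceding proposition, and for the $1$-forms, with the defining relations of $\al^h$ and $\al^v$.

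First I would compute $(a,b)^-(g,u)$ directly from the group law. Right translation by $(g,u)$ sends $(h,v)$ to $(h,v)\cdot(g,u)=(hg,\,v+\rho(h)(u))$, so differentiating at the identity $(e,0)$ in the direction $(a,b)$, the $G$-component produces $a^-(g)=T_eR_g(a)$ while the $\G$-component produces $b+\frac{d}{dt}_{|t=0}\rho(\exp(ta))(u)=b+(d_e\rho(a))(u)=b+a\bullet u$, using $d_e\rho=\mathrm{L}$. Hence, under the identification $T_{(g,u)}(G\times\G)=T_gG\times\G$,
\[ (a,b)^-(g,u)=(a^-(g),\,b+a\bullet u). \]
On the other side, applying the formulas of the preceding proposition to the vector fields $a^-$ and $b^-$, together with $T_gR_{g^{-1}}(a^-(g))=a$ and $T_gR_{g^{-1}}(b^-(g))=b$ (since these fields are right invariant), I get $(a^-)^h(g,u)=(a^-(g),\,a\bullet u)$ and $(b^-)^v(g,u)=(0,b)$. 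Adding these yields exactly $(a^-(g),\,a\bullet u+b)$, which proves the first identity.

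For the $1$-form identity, the key observation is that the fields $\{(c,d)^-:(c,d)\in\G\times\G\}$ form a global frame of $T(G\times\G)$, so it suffices to test both sides against them. On the left, a right invariant $1$-form pairs with a right invariant field to give the constant $\prec(\al,\be),(c,d)\succ=\al(c)+\be(d)$. On the right I would rewrite $(c,d)^-=(c^-)^h+(d^-)^v$ by the first identity and then invoke the defining relations $\al^h(X^h)=\al(X)\circ p$, $\al^h(X^v)=0$, $\al^v(X^v)=\al(X)\circ p$ and $\al^v(X^h)=0$: the two cross terms vanish and the surviving ones give $\al^-(c^-)\circ p+\be^-(d^-)\circ p=\al(c)+\be(d)$. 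Since the two sides agree on the frame, they agree everywhere.

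The only delicate point is the $\G$-component of $(a,b)^-$: one must differentiate the term $\rho(h)(u)$ coming from the twisted product and recognize, via $d_e\rho=\mathrm{L}$, that it produces precisely $a\bullet u$, which is exactly the $u$-dependent correction $T_gR_{g^{-1}}(X)\bullet u$ built into the horizontal lift. Everything else is a direct substitution into the two quoted propositions.
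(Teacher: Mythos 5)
Your proof is correct and follows essentially the same route as the paper: you differentiate the right translation of the twisted group law at $(e,0)$ to get $(a,b)^-(g,u)=(a^-(g),\,b+a\bullet u)$ and match this against the lift formulas $(a^-)^h(g,u)=(a^-(g),\,a\bullet u)$ and $(b^-)^v(g,u)=(0,b)$, exactly as in the paper's computation. For the $1$-form identity the paper only remarks that it follows easily from the first; your frame-testing argument against the right invariant fields $(c,d)^-$ is the natural way to make that deduction precise and is entirely sound.
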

  	
  	\begin{proof} We have
  		\begin{eqnarray*} (a,b)^-(g,u)&=&T_{(e,0)}R_{(g,u)}(a,b)\\&=&\frac{d}{dt}_{|t=0}(\exp(ta),tb)(g,u)\\&=& 
  			\frac{d}{dt}_{|t=0}(\exp(ta)g,tb+\rho(\exp(ta))(u))\\&=&(a^-(g),b+a\bullet u)\\
  			&=&(a^-(g),T_gR_{g^{-1}}(a^-(g))\bullet u)+(0,T_gR_{g^{-1}}(b^-(g))\\
  			&=&(a^-)^h(g,u)+(b^-)^v(g,u).\quad\quad \mbox{(Proposition \ref{par})}
  		\end{eqnarray*}The second relation can be deduced easily from the first one.
  		
  	\end{proof}

  	\paragraph{Proof of Theorem \ref{main} }\begin{proof} Let $\Pi$ be the Poisson tensor on $G\times\G$ associated to $r^-$. Then,  by using the precedent proposition,
  	\begin{eqnarray*}
  	\Pi((\al_1,\be_1)^-,(\al_2,\be_2)^-)&=&\Pi((\al_1^-)^h+(\be_1^-)^v,(\al_2^-)^h+(\be_2^-)^v)\\
  	&=&r^-(\al_1^-,\be_2^-)-r^-(\al_2^-,\be_1^-)\\
  	&=&r(\al_1,\be_2)-r(\al_2,\be_1)\\
  	&=&R^-((\al_1,\be_1)^-,(\al_2,\be_2)^-).
  	\end{eqnarray*}In the same way,
  	\begin{eqnarray*}
  		J_0^-(a,b)^-&=&(b,-a)^-=(b^-)^h-(a^-)^v,\\
  		J(a,b)^-&=&(b^-)^h-(a^-)^v,\\
  		\overline{\na}_{(a,b)^-}(c,d)^-&=&(\na_{a^-}c^-)^h+(\na_{a^-}d^-)^v=
  		-((a\bullet c)^-)^h-((a\bullet d)^-)^v=-((a,b).(c,d))^-=\wi\na_{(a,b)^-}(c,d)^-.
  	\end{eqnarray*}\end{proof}
  	
  	Let $(\G,\bullet)$ be a left symmetric algebra, $(M,\na)$ and affine manifold and $\rho:\G\too \Ga(TM)$ a linear map such that $\rho(u\bullet v)=\na_{\rho(u)}\rho(v)$. Then $\rho$ defines an action on $M$ of the Lie algebra $(\G,\br)$. We consider $\rho^l:\Phi(\G)\too\Ga(TTM)$, $(u,v)\too \rho(u)^h+\rho^v(v)$. It is easy to check that
  	\[ \rho^l([a,b])=[\rho^l(a),\rho^l(b)]. \]
  	Let $r\in\otimes^2\G$ satisfying $[[r,r]]=0$ and $R\in\otimes^2\Phi(\G)$ given by \eqref{R}. 
  	\begin{theorem}\label{action} The bivector field on $TM$ associated to $\rho(r)$ is $\rho^l(R)$ which is a Poisson tensor and $(M,\na,\rho(r))$ is a  contravariant pseudo-Hessian manifold.
  		
  	\end{theorem}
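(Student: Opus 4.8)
The plan is to reduce everything to Theorem \ref{poisson}. Writing $r=\sum_k a_k\otimes b_k$ as a symmetric tensor, $\rho(r)=\sum_k\rho(a_k)\otimes\rho(b_k)$ is a symmetric bivector field on $M$, so by the construction of Section \ref{section3} it carries an associated skew bivector field $\Pi$ on $TM$, and $(M,\na,\rho(r))$ is contravariant pseudo-Hessian precisely when $(TM,\Pi)$ is Poisson. Thus the whole statement follows once I prove the single identity $\Pi=\rho^l(R)$ together with $[\rho^l(R),\rho^l(R)]=0$.

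First I would make the identity $\Pi=\rho^l(R)$ explicit. From $R((\al_1,\be_1),(\al_2,\be_2))=r(\al_1,\be_2)-r(\al_2,\be_1)$ one reads off $R=\sum_k(a_k,0)\we(0,b_k)$ in $\otimes^2\Phi(\G)$, where I abbreviate $u\we v:=u\otimes v-v\otimes u$. Since $\rho^l(u,0)=\rho(u)^h$ and $\rho^l(0,u)=\rho(u)^v$, this gives $\rho^l(R)=\sum_k\rho(a_k)^h\we\rho(b_k)^v$. Evaluating on the three types of pairs of lifted forms and using that $\al^h$ (resp.\ $\al^v$) annihilates vertical (resp.\ horizontal) vectors, I obtain $\rho^l(R)(\al^v,\be^v)=\rho^l(R)(\al^h,\be^h)=0$ and $\rho^l(R)(\al^h,\be^v)=\sum_k\al(\rho(a_k))\be(\rho(b_k))\circ p=\rho(r)(\al,\be)\circ p$, which are exactly the defining relations of $\Pi$. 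Hence $\Pi=\rho^l(R)$, proving the first assertion.

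For the Poisson property I would invoke naturality of the Schouten--Nijenhuis bracket. The map $\rho^l:\Phi(\G)\too\Ga(TTM)$ is a morphism of Lie algebras; this is the relation $\rho^l([a,b])=[\rho^l(a),\rho^l(b)]$ recalled above, which rests on $[\rho(u),\rho(v)]=\rho([u,v])$ and $\na_{\rho(u)}\rho(v)=\rho(u\bullet v)$ together with the lift relations \eqref{br}. Any such morphism extends to a morphism of Gerstenhaber algebras $\wedge^\bullet\Phi(\G)\too\Ga(\wedge^\bullet TTM)$ and therefore intertwines the algebraic Schouten bracket on $\Phi(\G)$ with the Schouten--Nijenhuis bracket on $TM$; in particular $[\rho^l(R),\rho^l(R)]=\rho^l([R,R])$. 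By the preceding Proposition that $[[r,r]]=0$ if and only if $[R,R]=0$, the hypothesis $[[r,r]]=0$ forces $[R,R]=0$, whence $[\Pi,\Pi]=[\rho^l(R),\rho^l(R)]=\rho^l([R,R])=0$, so $\Pi$ is Poisson. Theorem \ref{poisson} then gives that $(M,\na,\rho(r))$ is contravariant pseudo-Hessian.

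The main obstacle is the naturality step $[\rho^l(R),\rho^l(R)]=\rho^l([R,R])$: one must ensure that the \emph{constant}-coefficient element $R$ is genuinely transported, i.e.\ that the bracket of the two action bivectors built from $\rho^l$ depends only on the Lie bracket of $\Phi(\G)$. This is the standard fact that a Lie-algebra morphism into vector fields is a morphism of the associated Gerstenhaber algebras; it can be verified directly by expanding $[\rho^l(R),\rho^l(R)]$ with the graded Leibniz rule and collapsing every term via $\rho^l([\cdot,\cdot])=[\rho^l(\cdot),\rho^l(\cdot)]$, the remaining bookkeeping being routine. Everything else is the bilinear algebra of horizontal and vertical lifts already set up in Section \ref{section3}.
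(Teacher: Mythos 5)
Your proposal is correct and follows essentially the same route as the paper: you identify $\rho^l(R)$ with the bivector $\Pi$ associated to $\rho(r)$ by evaluating on horizontal and vertical lifts of $1$-forms, deduce $[R,R]=0$ from $[[r,r]]=0$ via the preceding proposition, transport this through the Lie-algebra morphism $\rho^l$ to get $[\Pi,\Pi]=0$, and conclude with Theorem \ref{poisson}. The only difference is that the paper invokes without proof the standard fact that an action carries a solution of the classical Yang--Baxter equation to a Poisson tensor, whereas you make explicit the underlying naturality of the Schouten--Nijenhuis bracket under the wedge extension of $\rho^l$ --- a correct and worthwhile clarification, but not a different argument.
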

  	
  	\begin{proof} Let $(e_1,\ldots,e_n)$ a basis of $\G$ and $E_{i}=(e_i,0)$ and $F_i=(0,e_i)$. Then $(E_1,\ldots,E_n,F_1,\ldots,F_n)$ is a basis of $\Phi(\G)$. Then
  		\[ r=\sum_{i,j}r_{i,j}e_i\otimes e_j\esp R=\sum_{i,j} r_{i,j}\left( E_i\otimes F_j-F_i\otimes E_j\right). \]So
  		\[ \rho(r)=\sum_{i,j=1}^nr_{i,j}\rho(e_i)\otimes \rho(e_j)\esp \rho^l(R)
  		=\sum_{i,j=1}^n r_{i,j}\left( 
  		\rho(e_i)^h\otimes \rho(e_j)^v-\rho(e_i)^v\otimes \rho(e_j)^h\right). \]Then for any $\al,\be\in\Om^1(M)$
  		\[ \rho^l(R)(\al^v,\be^v)=\rho^l(R)(\al^h,\be^h)=0\esp \rho^l(R)(\al^h,\be^v)=
  		\rho(r)(\al,\be)\circ p. \]	
  		According to Proposition \ref{R}, $R$ is a solution of the classical Yang-Baxer equation and hence $\rho^l(R)$ is a Poisson tensor. By using Theorem \ref{poisson}, we get that $(M,\na,\rho(r))$ is a  contravariant pseudo-Hessian manifold.
  	\end{proof}
  	
  	\begin{example}\begin{enumerate}\item
  		
  		 Let $\G=\mathrm{gl}(n,\R)$ be the Lie algebra of $n$-square matrices. It is has a structure of left symmetric algebra given by $A\bullet B=BA$. Let $\rho:\G\too\Ga(T\R^n)$ given by $\rho(A)=A$. Then $\rho(A\bullet B)=\na_AB$, where $\na$ is the canonical connection of $\R$. According to Theorem \ref{action}, any $S$-matrix on $\G$  gives rise to a quadratic contravariant pseudo-Hessian structure on $\R^n$. 
  		 
  		 \item More generally, let $(M,\na)$ be an affine manifold and $\G$ the finite dimensional Lie algebra of affine vector fields. Recall that $X\in\G$ if for any $Y,Z\in\Ga(TM)$,
  		 \[ [X,\na_YZ]=\na_{[X,Y]}Z+\na_Y[X,Z]. \]
  		 Since the curvature and the torsion of $\na$ vanish this is equivalent to
  		 \[ \na_{\na_YZ}X=\na_Y\na_ZX. \]
  		 From this relation, one can see easily that, for any $X,Y\in\G$, $X\bullet Y:=\na_XY\in\G$ and $(\G,\bullet)$ is an associative finite dimensional Lie algebra which acts on $M$ by $\rho(X)=X$. Moreover, $\rho(X\bullet Y)=\na_XY$. According to Theorem \ref{action}, any $S$-matrix on $\G$  gives rise to a  contravariant pseudo-Hessian structure on $M$. 
  		 \end{enumerate}   
  		
  	\end{example}

  	\subsection*{\textbf{Classification of two-dimensional contravariant pseudo-Hessian algebras}}
  	
  	Using the classification of two-dimensional non-abelian left symmetric algebras given in \cite{Bu} and the classification of abelian left symmetric algebras given in \cite{Ra}, we give a classification (over the field $\mathbb{R}$ ) of 
  	2-dimensional contravariant pseudo-Hessian  algebras. We proceed as follows:
  	\begin{enumerate}
  		\item For any left symmetric 2-dimensional algebra $\G$, we determine its automorphism group $\mathrm{Aut}(\G)$ and the space of $S$-matrices on $\G$, we denote by $\mathcal{A}(\G)$.
  		\item We give the quotient $\mathcal{A}(\G)/\sim$ where $\sim$ is the equivalence relation:
  		
  \begin{equation*}
  r^1\sim r^{2}\Longleftrightarrow \exists\; A\in\mathrm{Aut}(\mathfrak{g})\; { or }\; \exists\;\lambda\in\mathbb{R} \text{ such that } r_{\sharp}^{2}=A\circ r_{\sharp}^{1}\circ A^{t} \text{ or } r^{2}=\lambda r^{1}.
  \end{equation*}
  		
  	\end{enumerate}

  	\begin{center}
  		\begin{tabular}{|p{4cm}|p{3cm}|p{6cm}|}
  			\hline $(\mathfrak{g},.)$ & $\mathrm{Aut}(\mathfrak{g})$ & $\displaystyle {\mathcal{A}(\G)}/{\sim}$ \\ 
  			\hline $b_{1,\alpha\neq-1,1}\newline e_{2}.e_{1}=e_{1}, e_{2}.e_{2}=\alpha e_{2}$ &  	$\displaystyle\begin{pmatrix}
  			a & 0  \\
  			0 & 1
  			\end{pmatrix}, a\neq0$  &  	$\displaystyle r_{\sharp}^{1}=\begin{pmatrix}
  			1 & 0  \\
  			0 & 0
  			\end{pmatrix}; r_{\sharp}^{2}=\begin{pmatrix}
  			0 & 0  \\
  			0 & 1
  			\end{pmatrix};r_{\sharp}^{3}=0$ \\  
  			\hline $b_{1,\alpha=-1}\newline e_{2}.e_{1}=e_{1}, e_{2}.e_{2}=-e_{2}$ &  	$\begin{pmatrix}
  			a & 0  \\
  			0 & 1
  			\end{pmatrix},a\neq0$  &  $r_{\sharp}^{1}=\begin{pmatrix}
  			b & 1  \\
  			1 & 0
  			\end{pmatrix};	r_{\sharp}^{2}=\begin{pmatrix}
  			1 & 0  \\
  			0 & 0
  			\end{pmatrix};\newline r_{\sharp}^{3}=\begin{pmatrix}
  			0 & 0  \\
  			0 & 1
  			\end{pmatrix}; r_{\sharp}^{4}=0$ \\
  			\hline $b_{1,\alpha=1}\newline e_{2}.e_{1}=e_{1}, e_{2}.e_{2}=e_{2}$ &  	$\begin{pmatrix}
  			a & 0  \\
  			0 & b
  			\end{pmatrix},ab\neq0$  &  $r_{\sharp}^{1}=\begin{pmatrix}
  			1 & c  \\
  			c & c^{2}
  			\end{pmatrix};	r_{\sharp}^{2}=\begin{pmatrix}
  			0 & 0  \\
  			0 & 1
  			\end{pmatrix};r_{\sharp}^{3}=0$ \\
  			\hline $b_{2,\beta\neq0,1,2}\newline e_{1}.e_{2}=\beta e_{1}, e_{2}.e_{1}=(\beta-1)e_{1},e_{2}.e_{2}=\beta e_{2}$ &  	$\begin{pmatrix}
  			a & b  \\
  			0 & 1
  			\end{pmatrix},a\neq0$  &  $r_{\sharp}^{1}=\begin{pmatrix}
  			1 & 0  \\
  			0 & 0
  			\end{pmatrix};r_{\sharp}^{2}=\begin{pmatrix}
  			0 & 0  \\
  			0 & 1
  			\end{pmatrix}; r_{\sharp}^{3}=0$ \\
  			\hline $b_{2,\beta=1}\newline e_{1}.e_{2}=e_{1},e_{2}.e_{2}=e_{2}$ &  	$\begin{pmatrix}
  			a & b  \\
  			0 & 1
  			\end{pmatrix},a\neq0$  &  $r_{\sharp}^{1}=\begin{pmatrix}
  			1 & c  \\
  			c & c^{2}
  			\end{pmatrix};r_{\sharp}^{2}=\begin{pmatrix}
  			0 & 0  \\
  			0 & 1
  			\end{pmatrix}; r_{\sharp}^{3}=0$ \\
  			\hline $b_{2,\beta=2}\newline e_{1}.e_{2}=2e_{1},\newline e_{2}.e_{1}=e_{1},e_{2}.e_{2}=2e_{2}$ &  	$\begin{pmatrix}
  			a & b  \\
  			0 & 1
  			\end{pmatrix},a\neq0$  &  $r_{\sharp}^{1}=\begin{pmatrix}
  			1 & 0  \\
  			0 & c
  			\end{pmatrix};r_{\sharp}^{2}=\begin{pmatrix}
  			0 & 0  \\
  			0 & 1
  			\end{pmatrix}; r_{\sharp}^{3}=0$ \\
  			\hline $b_{3}\newline e_{2}.e_{1}=e_{1}, e_{2}.e_{2}=e_{1}+e_{2}$ &  	$\begin{pmatrix}
  			1 & b  \\
  			0 & 1
  			\end{pmatrix}$  &  $r_{\sharp}^{1}=\begin{pmatrix}
  			1/2 & 1  \\
  			1 & 1
  			\end{pmatrix};r_{\sharp}^{2}=\begin{pmatrix}
  			1 & 0  \\
  			0 & 0
  			\end{pmatrix}; r_{\sharp}^{3}=0$ \\
  			\hline $b_{4}\newline e_{1}.e_{1}=2 e_{1},e_{1}.e_{2}=e_{2},\newline e_{2}.e_{2}=e_{1}$ &  	$\begin{pmatrix}
  			1 & 0  \\
  			0 & -1
  			\end{pmatrix};\begin{pmatrix}
  			1 & 0  \\
  			0 & 1
  			\end{pmatrix}$  &  $r_{\sharp}^{1}=\begin{pmatrix}
  			1 & 0  \\
  			0 & 2
  			\end{pmatrix};r_{\sharp}^{2}=\begin{pmatrix}
  			1 & 0  \\
  			0 & 0
  			\end{pmatrix}; r_{\sharp}^{3}=0$ \\
  			\hline $b_{5}\newline e_{1}.e_{2}=e_{1}, e_{2}.e_{2}=e_{1}+e_{2}$ &  	$\begin{pmatrix}
  			1 & b  \\
  			0 & 1
  			\end{pmatrix}$  &  $r_{\sharp}^{1}=\begin{pmatrix}
  			1 & 0  \\
  			0 & 0
  			\end{pmatrix}; r_{\sharp}^{2}=0$ \\
  			\hline $As_{2}^{1}\newline e_{1}.e_{1}=e_{2}$ &  	$\begin{pmatrix}
  			a & 0  \\
  			b & a^{2}
  			\end{pmatrix}, a\neq0$  &  $r_{\sharp}^{1}=\begin{pmatrix}
  			0 & 1  \\
  			1 & 0
  			\end{pmatrix};r_{\sharp}^{2}=\begin{pmatrix}
  			0 & 0  \\
  			0 & 1
  			\end{pmatrix}; r_{\sharp}^{3}=0$ \\
  			\hline 
  		\end{tabular}
  	\end{center}
  	\begin{center}
  		\begin{tabular}{|p{4cm}|p{3cm}|p{6cm}|}
  			\hline $As_{2}^{4}\newline e_{1}.e_{1}=e_{1},e_{1}.e_{2}=e_{2},\newline e_{2}.e_{2}=e_{2}$ &  	$\begin{pmatrix}
  			1 & 0  \\
  			0 & a
  			\end{pmatrix},a\neq0$  &  $r_{\sharp}^{1}{=}\begin{pmatrix}
  			0 & 1  \\
  			1 & c
  			\end{pmatrix};r_{\sharp}^{2}{=}\begin{pmatrix}
  			0 & 0  \\
  			0 & 1
  			\end{pmatrix};\newline r_{\sharp}^{3}{=}\begin{pmatrix}
  			1 & 0  \\
  			0 & 0
  			\end{pmatrix}; r_{\sharp}^{4}{=}0$ \\
  			\hline 
  		\end{tabular}
  	\end{center}
  	
  	We end this paper by giving another proof to Lemma \ref{le1}.
  	
  	\begin{proof}
  		For any $\ga:[0,1]\too G\times G$, $t\mapsto (\ga_1(t),\ga_2(t))$ with $\ga(0)=(a,b)$ and $\ga(1)=(c,d)$,
  		\[ \tau_{m(\ga)}(T_{(a,b)}m(u,v))=T_{(c,d)}m(\tau_\ga(u,v)), \]where $\tau_\ga:T_{(a,b)}(G\times G)\too T_{(c,d)}(G\times G)$ and $\tau_{m\ga}:T_{ab}G\too T_{cd}G$ are the parallel transports.
  		But
  		\[ T_{(a,b)}m(u,v)=T_aR_b(u)+T_bL_a(v) \esp \tau_\ga(u,v)=(\tau_{\ga_1}(u),\tau_{\ga_2}(v)). \]So we get
  		\[ \tau_{\ga_1\ga_2}(T_aR_b(u))+\tau_{\ga_1\ga_2}(T_bL_a(v))=T_cR_d(\tau_{\ga_1}(u))+
  		T_dL_c(\tau_{\ga_2}(v)). \]
  		If we take $v=0$ and $\ga_2(t)=b=d$. We get
  		\[ \tau_{\ga_1b}(T_aR_b(u))=T_cR_b(\tau_{\ga_1}(u)) \]and hence $\na$ is right invariant. In the same way we get that $\na$ is left invariant. And finally
  		\[ \tau_{\ga_1\ga_2}(T_aR_b(u))=T_cR_d(\tau_{\ga_1}(u))\esp \tau_{\ga_1\ga_2}(T_bL_a(v))=T_dL_c(\tau_{\ga_2}(v)). \]If we take $\ga_2=\ga_1^{-1}$ we get that
  		\[ \tau_{\ga_1}(u)=T_aR_{a^{-1}c}(u)=T_aL_{ca^{-1}}(u). \]This implies that the adjoint representation is trivial and hence $G$ must be abelian.
  	\end{proof}
  	
\bibliographystyle{amsplain}

 \end{document}